\newtheorem{theorem}{Theorem}[section]
\newtheorem{corollary}[theorem]{Corollary}
\newtheorem{lemma}[theorem]{Lemma}
\newtheorem{proposition}[theorem]{Proposition}
\theoremstyle{definition}
\newtheorem{remark}[theorem]{Remark}
\newcommand{\integers}{\ensuremath{\mathds{Z}}} 
\newcommand{\rationals}{\ensuremath{\mathds{Q}}} 
\newcommand{\comment}[1]{}
\newcommand{\logspace}{\textsc{\textbf{LSPACE}}}
\newcommand{\eval}[1]{\ensuremath{\mathrm{eval}({#1})}}
\newcommand\malcevgen{lower central Mal'cev } 
\newcommand{\coords}[1]{\ensuremath{\mathrm{Coord}({#1})}} 
\newcommand{\coordsj}[2]{\ensuremath{\mathrm{Coord}_{{#2}}({#1})}} 
\newcommand{\pivot}[1]{\ensuremath{\pi_{{#1}}}} 
\newcommand{\am}[1]{}{}
\newcommand{\sv}[1]{}{}
\newcommand{\jm}[1]{}{}
\newcommand{\an}[1]{}{}
\title{Logspace and compressed-word computations in nilpotent groups}
\author[J.Macdonald]{Jeremy Macdonald}
\address{Concordia University, 1455 De Maisonneuve Blvd. W.,
Montreal, Quebec, Canada,
H3G 1M8}
\curraddr{}
\email{}
\thanks{}
\author[A.Myasnikov]{Alexei Myasnikov}
\address{Stevens Institute of Technology,
Castle Point on Hudson,
Hoboken, NJ 07030}
\curraddr{}
\email{}
\thanks{}
\author[A.Nikolaev]{Andrey Nikolaev}
\address{Stevens Institute of Technology,
Castle Point on Hudson,
Hoboken, NJ 07030}
\curraddr{}
\email{}
\thanks{}
\author[S.Vassileva]{Svetla Vassileva}
\address{Concordia University, 1455 De Maisonneuve Blvd. W.,
Montreal, Quebec, Canada,
H3G 1M8}
\curraddr{}
\email{}
\thanks{}
\subjclass[2010]{20F10, 20F14, 20F18, 68Q25}
\date{}
\begin{document}
\maketitle{}
\begin{abstract}
For finitely generated nilpotent groups, we employ Mal'cev coordinates to solve several classical algorithmic problems 
efficiently.  Computation of normal forms, the membership problem, the conjugacy problem, and computation of presentations 
for subgroups are solved using only logarithmic space and quasilinear time. Logarithmic space presentation-uniform versions of these algorithms are provided. Compressed-word versions of the same problems, in which each input word is provided as a straight-line program, are solved in polynomial time.
\end{abstract}

\tableofcontents







\section{Introduction}


Algorithmic properties of finitely generated nilpotent groups have been extensively studied, and many algorithmic problems in these groups are known to be decidable.
However, in most cases neither the computational complexity of the existing algorithms nor the exact complexity of the problems themselves is known. Some of the earlier results in this area were obtained before complexity issues became a concern in algebra, while others were mostly focused on practical aspects of computing.
Lately, ideas from space complexity are exerting a strong influence on computations in algebra, and in particular there is an active research interest in logarithmic space  computations.  Another influence of computer science on modern algebraic computing concerns data representation in compressed form and its use in developing algorithms with lower computational complexity. Polynomial time, logarithmic space, and compressed-word computations are now main players in modern algorithmic group theory. We will elaborate their roles in \S1.2--\S1.4 below. 

In this paper we study the computational complexity of several fundamental algorithmic problems in finitely generated nilpotent groups. These problems include 
computing normal forms (Mal'cev coordinates) of group elements (hence deciding the word problem), deciding the conjugacy and membership problems,  computing kernels of homomorphisms, and finding presentations for finitely generated subgroups. We prove that in a fixed group all these problems are computable in space logarithmic in the size of the input and, simultaneously, in quasilinear time. 

For the decision problems above (i.e., word, conjugacy, and membership problems), our algorithms also solve the `search' version of the problem, meaning if the algorithm answers ``Yes'' it also provides a witness to its answer, in logarithmic space and quasilinear time.
For the word problem, this means writing a trivial element as a product of conjugates of relators; for the conjugacy problem, finding a conjugator; for the membership problem, expressing the given element as a product of subgroup generators. Note that, generally speaking, the search version of the word problem can be arbitrarily more difficult than the decision problem, for example, see~\cite[Section 4]{Madlener-Otto:85}.


We also consider compressed-word versions of 
these problems, in which each input word is provided as a straight-line program (compressed word) producing a word over the generating set.  We solve all compressed versions in time 
quartic in the input size.

All our algorithms \emph{can} be executed 
uniformly, meaning the nilpotent group may be given by an arbitrary presentation as part of the input, but in general this will invalidate the complexity assessment 
as the nilpotency class and number of generators play a role in the complexity bound.  However, if both the number of generators and the nilpotency class are bounded, 
the algorithms run in logarithmic space and polynomial time, with the degree of the polynomial depending on the bound.


\subsection{Known approaches and summary of new results}

An early example of the study of algorithmic problems in nilpotent groups is the work of Mostowski~\cite{Mostowski}, who provided solutions to several algorithmic problems (word, membership, finiteness problems) and further expressed hope that the algorithms are practical for carrying out on a digital computer. As another example, while the word problem in nilpotent groups has long been known to be decidable, it was only established comparatively recently~\cite{HR} that it is in fact decidable in real (therefore, linear) time. In 1958, Mal'cev~\cite{Malcev} investigated finite approximations of finitely generated nilpotent groups, which allowed him to prove that they have decidable membership problem. In 1965, Blackburn~\cite{Blackburn}, using the same method, showed decidability of the conjugacy problem for the same class of groups. Such separability arguments, while sufficient to show that the corresponding decision problems are solvable, offer, by themselves, no reasonable estimates on the time or space complexity of the algorithms involved. Only very recently, certain bounds for so-called full residual finiteness growth of nilpotent groups were established~\cite{BouRabee}.

We would like to point out two more established approaches to algorithmic questions in nilpotent groups. One is based on the fact that every torsion-free nilpotent group embeds into a linear group $\mathrm{UT}_{d}(\integers)$ (see for example~\cite{Hal69} or~\cite{Kargapolov-Merzlyakov}).  This easily solves the word problem, and was used by Grunewald and Segal in 1980~ \cite{GS80} to solve the last (at the time) standing major algorithmic problem in nilpotent groups, the isomorphism problem. It is worth mentioning that such an embedding shows that the word problem for torsion-free finitely generated nilpotent groups is, indeed, in \logspace{}~\cite{Lipton_Zalc}, 
but does not seem to provide any concrete time or space complexity estimates in the case of the conjugacy or membership problems.


Another fruitful approach is due to the fact that finitely generated nilpotent groups admit a so-called Mal'cev (Hall--Mal'cev) basis (see, for example, \cite{Hal69} and \cite{Kargapolov-Merzlyakov}), which allows one to carry out group operations by evaluating polynomials (see Lemma~\ref{le:malcev}). This approach was systematically used in~\cite{KRRRCh}, which gave solutions to a number of algorithmic problems  in a class of groups that includes finitely generated nilpotent groups. It was also used in the above-mentioned work of Mostowski~\cite{Mostowski}. 

Mal'cev basis techiniques can be viewed as part of a more general picture. Indeed, every finitely generated nilpotent group $G$ is polycyclic.  To a polycyclic series one may associate a \emph{polycyclic presentation}, which in the case of nilpotent groups is closely connected to establishing a Mal'cev basis. Many algorithimic problems may be solved using such a presentation. This approach is described in detail in \cite{Sim94} and further studied in~\cite{LGS98, Nickel}, and in particular may be used to solve the membership and conjugacy problems in polycyclic groups. To our knowledge, no robust complexity estimates for such methods have been  established in the case of nilpotent groups, with one exception of the recent papers~\cite{MNU2,MNU3}, which find  polynomial bounds for the equalizer and membership problems.


We follow the Mal'cev basis approach.  Let $G$ be a fixed finitely generated nilpotent 
group. We describe algorithms to solve each of the following problems. 
\begin{enumerate}[(I)]
\item \label{Prob:NF} Given $g\in G$, compute the (Mal'cev) normal form of $g$.
\item \label{Prob:MP} Given $g,h_{1},\ldots,h_{n}\in G$, decide whether
 $g\in \langle h_{1},\ldots,h_{n}\rangle$ and if so express $g$ as a product of subgroup generators from a standardized set.
\item \label{Prob:KP} Fix another finitely generated nilpotent group, $H$.  
Given $K=\langle g_{1},\ldots,g_{n}\rangle$, a homomoprhism $\phi:K\rightarrow H$ specified 
by $\phi(g_{i})=h_{i}$, and $h\in \mathrm{Im}(\phi)$, compute a generating set for $\ker(\phi)$ and find $g\in G$ such that $\phi(g)=h$.
\item \label{Prob:P} Given $K=\langle g_{1},\ldots,g_{n}\rangle \leq G$, compute a presentation for $K$.
\item \label{Prob:C} Given $g\in G$, compute a generating set for the centralizer of $g$.
\item \label{Prob:CP} Given $g,h\in G$, decide whether or not there exists $u\in G$ such that $u^{-1}gu=h$ and if so find such an element $u$.
\end{enumerate}
In every case, 
the algorithm runs in space $O(\log L)$ and, simultaneously, in time $O(L\log^{3} L)$, where $L$ is the size of the input of the given problem. Problems (\ref{Prob:NF}), 
(\ref{Prob:C}), and (\ref{Prob:CP}) 
in fact run in time $O(L\log^{2} L)$. 
To every subgroup one may associate a standardized or 
\emph{full-form} generating set, and it is this set that is 
used in problem (\ref{Prob:MP}) to express $g$ 
as a product of subgroup generators; we may in addition express $g$ in 
terms of the given generating set, but the algorithm then 
runs in polynomial time (and not logspace).

A Mal'cev basis consisting of $m$ elements establishes a coordinatization of $G$, whereby each element is identified with an $m$-tuple of integers.  The coordinates of 
a product $gh$ are given by polynomial functions of the coordinates of $g$ and $h$,
and the coordinates of a power $g^l$ are given by polynomial functions of $l$ and 
the coordinates of $g$.
Our algorithms work directly with the coordinate tuples and these multiplication/exponentiation 
polynomials.  
The key to obtaining 
logarithmic space bounds, and polynomial time bounds for compressed-word problems, is the fact that coordinates of an $n$-fold product 
$g_{1}g_{2}\ldots g_{n}$ are bounded in magnitude by a polynomial function whose degree 
depends only on the nilpotency class $c$ of $G$ (a constant) and not, as may be inferred by composing the polynomials $n$ times, on the length of the product (Theorem~\ref{Lem:PolyCoords}).

The class \logspace{} of problems decidable in logarithmic space is defined via machines called \emph{logspace transducers}, 
and we recall the relevant definitions in \S\ref{Section:Logspace}.  
Logarithmic space computations in groups have been studied primarily in relation to the word and normal form problems.   
In free groups, the word problem was solved in logarithmic space by~\cite{Lipton_Zalc}.
Normal forms were computed in logarithmic space for graph groups and Coxeter groups in~\cite{DKL} and the class of groups with logspace-computable normal forms was shown to be closed under several important constructions in~\cite{EEO}.  The conjugacy problem was also solved in logspace for free solvable groups, the Grigorchuk group, and certain wreath products in~\cite{Svetla}.

We also consider compressed-word versions of problems (\ref{Prob:NF})--(\ref{Prob:CP}). 
In this formulation, every input word is given in the form of a 
straight-line program (see \S\ref{Section:SLP}) and the input size $L$ is the sum of the sizes of all input programs. 
A program of size $L$ may encode a word of length up to $2^{L-1}$, and so 
efficient (i.e. polynomial time)  algorithms must work directly on the straight-line 
program without producing the encoded word itself. 
We solve all of the problems (\ref{Prob:NF})--(\ref{Prob:CP}), with compressed-word inputs, in 
time $O(L^{4})$, with  (\ref{Prob:NF}), (\ref{Prob:C}), and (\ref{Prob:CP}) being 
solved in time $O(L^{3})$. The approach is to first solve (\ref{Prob:NF}) to compute the Mal'cev coordinates of each input element, write each coordinate as a binary number, then 
apply the previous algorithms to the coordinate tuples.  This also shows that 
a second `compressed' formulation, in which 
every input word is given by its (binary) Mal'cev coordinates, can be solved in 
the same time complexity for each problem.

The compressed version of the word problem is known to be polynomial-time decidable in several classes of groups, including free groups \cite{Loh04},  
partially commutative groups \cite{LS07}, limit groups \cite{Mac10}, and nilpotent groups \cite{HLM12}; further, 
polynomial-time decidability is preserved under many important 
group-theoretic constructions, such as graph products (see \cite{Loh12} for a summary). 
One motivation for obtaining a polynomial-time solution to the compressed word problem 
in a group $G$ is that such a solution give rise to a polynomial-time solution to the (non-compressed) 
word problem in any finitely generated subgroup of $\mathrm{Aut}(G)$ 
and in semi-direct products involving $G$ \cite{Sch08}.

Less is known about the compressed conjugacy problem. It is poly\-no\-mial-time decidable in free groups~\cite{Sch08} and more generally partially commutative groups~\cite{HLM12}, these results being part of a polynomial-time solution to the word problem in the outer automorphism group of these groups. Further, the compressed conjugacy problem was recently shown to be polynomial-time decidable in hyperbolic groups~\cite{Holt-Lohrey-Schleimer:2018}. For the compressed membership problem, we are not aware of any previous results for interesting classes of groups. Even in free groups no polynomial-time algorithm is known, though recent results of Jez \cite{Jez12} on DFAs with compressed labels are closely related. Compressed membership in abelian groups is easily solved in polynomial time by converting the straight-line programs to $O(n)$-bit integer vectors and applying linear algebra techniques, and our proof for nilpotent groups uses a similar approach. 


\subsection{Logspace}\label{Section:Logspace}
We define logarithmic space computation via a machine called a \emph{logspace transducer}. 
Briefly, this is a deterministic Turing machine with three tapes: a read-only input tape, a read-write work tape with number of cells 
logarithmic in the size of the input tape, and a write-only output tape.  We provide the details below.


Let $\Sigma$ be a finite alphabet containing a symbol $\epsilon$ called the \emph{blank symbol}. 
A \emph{tape} is an infinite sequence $X=\{x_n\}_n$ with $x_n\in \Sigma$ and all but finitely many $x_{n}$ being the blank symbol. 
The subsequence consisting of all symbols up to and including the last non-blank symbol is called the \emph{content} of the tape and the length 
of this sequence is the \emph{size} of the tape. 
Intuitively, we think of it as a one-ended infinite array of cells, each cell holding an element of this sequence. 
To every tape $X$ we associate a positive integer $h_X$ called the \emph{head position}.


Let $S$ be a finite set, called the set of \emph{states}. 
A \emph{configuration} is a tuple $C=(s,I,h_{I}, W, h_{W}, O, h_{O})$ consisting of a state 
$s\in S$ and three tapes $I,W,O$ called the \emph{input tape}, \emph{work tape}, and 
\emph{output tape} (respectively) together with the head positions for each tape.  
A \emph{transducer} is a function which assigns to every possible configuration 
$C$ a successor configuration $C'=(s',I',h_{I'}, W', h_{W'}, O', h_{O'})$ with the following properties: 
\begin{itemize}
\item $C'$ depends only on $s$ and the symbols at $h_{I}$ on $I$ and at $h_{W}$ on $W$;
\item $I=I'$ and $h_{I'}$ differs from $h_{I}$ by at most 1; 
\item $W$ and $W'$ are identical except possibly at position $h_{W}$, and $h_{W'}$ differs from $h_{W}$ by at most 1;
\item either $h_{O}=h_{O'}$ and $O=O'$ or $h_{O'}=h_{O}+1$ and $O'$ differs from $O$ only in position $h_{O}$.
\end{itemize} 
A \emph{run} of the transducer is a finite sequence of configurations 
$C_{1}, C_{2}, \ldots, C_{k}$ where $C_{i+1}=C_{i}'$ for $i=1,\ldots,k-1$, 
$C_{k}'=C_{k}$ (there is no further computation to perform), and the work and output tapes of $C_{1}$ contain only 
the blank symbol $\beta$.  The content of the input tape of $C_{1}$ is called the \emph{input} and the content of the output tape 
of $C_{k}$ is the \emph{output}. 

Let $c>0$ be any integer.  A transducer is called a \emph{$c$-logspace transducer} if for every possible run the size of the work tape in 
every configuration is bounded by $c\log n$ where $n$ is the size of the input tape 
(the base of the logarithm is not generally relevant, though using $|\Sigma|$ or 
$|\Sigma|-1$ is natural).  
Provided such a constant $c$ exists for a given transducer, it will be called simply a \emph{logspace transducer}. 

Let $\Sigma'$ be another alphabet. We say that a function 
$f: \Sigma^*\rightarrow (\Sigma')^*$ is \emph{logspace computable} if there is a logspace transducer which for every input $w\in \Sigma^*$ produces $f(w)$ on the output tape. A \emph{decision problem}, which we define as a subset of $\Sigma^*$, is \emph{logspace decidable} if its characteristic function is logspace computable. 
The complexity class \logspace{} consists of all decision problems that are logspace 
decidable. 
Note that in order to discuss multi-variable functions, we simply add a new symbol $\alpha$ to the alphabet and separate the input words by this symbol. 

Any function that is logspace computable is also computable in polynomial time 
(meaning the length of every run is bounded by a polynomial function of $n$). 
Indeed, in any run the sequence of configurations that follow a given configuration $C_{i}$ are determined by $C_{i}$ only.  Hence no run may contain the same configuration twice since runs have finite length by definition. 
Thus, the length of any given run (and hence the time complexity of the machine) is bounded by the number of possible configurations 
\[|S|\cdot n  \cdot |\Sigma|^{c\log n}\cdot c\log n \in O(n^{c+2}),
\] 
where $n$ is the length of the input. Since the degree $c+2$ of this polynomial 
can be expected to be quite high, it is usually advantageous to analyze the 
time complexity of logspace transducers directly and obtain, if possible, a 
low-degree polynomial time bound.

The type of computations that can be done in logspace are quite limited. 
For example, to store an integer $M$ requires $\log M$ bits.  Hence we can store and 
count up to $n^{c}$, but not higher.  We may also store and manipulate pointers to 
different locations in the input, as each such pointer is an integer of size at 
most $n$.  Basic arithmetic operations are also computable in logspace, and it 
is possible to compose logspace transducers (i.e., the class of logspace computable functions is closed under composition).  

The above is a formal description of logspace computability via transducers, but in practice we simply work with informal algorithm descriptions and ensure that 
our algorithms require no more than logarithmic space.  Each of our algorithms may 
be formally encoded as a logspace transducer.


\subsection{Compressed words}\label{Section:SLP}
Let $\Sigma$ be a set of symbols containing a special symbol $\epsilon$ used to 
denote the empty word. 
A \emph{straight-line grammar $\mathds{A}$ over $\Sigma$ in Chomsky normal form} consists of an ordered finite set 
$\mathcal{A}$, called the set of \emph{non-terminal symbols}, together with for each $A\in \mathcal{A}$ exactly one \emph{production rule} either of the form 
$A \rightarrow BC$ where $B,C\in \mathcal{A}$ and $B,C<A$ or of the form $A \rightarrow x$ where $x\in \Sigma$.
The greatest non-terminal is called the \emph{root}, elements of $\Sigma$ are called \emph{terminal symbols}, and the \emph{size} 
of $\mathds{A}$ is the number of non-terminal symbols and is denoted $|\mathds{A}|$. For shortness, with a slight abuse of terminology, in this paper we refer to straight-line grammars in Chomsky normal form as \emph{straight-line programs} or \emph{compressed words}.
Note that any program $\mathds{A}$ may, by encoding the non-terminal symbols as integers, be written down using $O(|\mathds{A}|\cdot\log|\mathds{A}|)$ bits. 

The \emph{output} or \emph{evaluation} of $\mathds{A}$ is the word in $\Sigma^{*}$ obtained by starting with the root non-terminal and 
successively replacing every non-terminal symbol with the right-hand side of its production rule.  It is denoted $\eval{\mathds{A}}$ and  
we similarly denote by $\eval{A}$ the word obtained starting with the non-terminal $A\in \mathcal{A}$. 
For example, the program $\mathds{B}$ over $\{x\}$ with production rules 
\[
B_{n}\rightarrow B_{n-1}B_{n-1}, \;B_{n-1}\rightarrow B_{n-2}B_{n-2}, \;\ldots,\; B_{1}\rightarrow x
\]
has $\eval{\mathds{B}}=x^{2^{n-1}}$ and $\eval{B_{2}}=x^{2}$. 
As this example illustrates, the length of $\eval{\mathds{A}}$ may be exponential in $|\mathds{A}|$ (this program 
in fact achieves the maximum-length output).
Thus to have efficient algorithms dealing with compressed words, one must avoid computing $\eval{\mathds{A}}$ and instead work directly 
with the production rules.  A fundamental result of Plandowski \cite{Pla94} states that two straight-line programs may be checked 
for character-for-character equality of their outputs in time polynomial in the sum of the program lengths.  

Let us note here that for any fixed word $w=x_{1}x_{2}\cdots x_{m}$ over $\Sigma$, the word $w^{n}$ may be encoded with a program of size $O(\log(n))$.  
Indeed, we may first encode $w$ using binary subdivision, i.e. with the scheme $W_{i}\rightarrow x_{i}$ for $i=1,\ldots,m$, $W_{m+1}\rightarrow W_{1}W_{2}$, 
$W_{m+2}\rightarrow W_{3}W_{4}$ and so on, obtaining $\eval{W_{k}}=w$ where $k$ is 
some integer bounded by $|w|+\log_{2}|w|$.  
A program similar to $\mathds{B}$ above, with $W_{k}$ in place of $x$ and 
suitable modifications when $n$ is not a power of 2, will encode $w^{n}$.

A straight-line program over a group $G$ is a straight-line program over a given symmetrized generating set of $G$.  Any algorithmic problem 
for $G$ which takes words as input may be considered in `compressed-word form' where all input words are provided as straight-line programs. 
For example, the compressed conjugacy problem asks, given two straight-line programs $\mathds{A}$ and $\mathds{B}$ over $G$, whether or not 
$\eval{\mathds{A}}$ and $\eval{\mathds{B}}$ represent conjugate group elements.

\section{Computing Mal'cev normal forms}

To produce efficient algorithms, our nilpotent group will need to be given by a 
particular type of presentation, known as a consistent nilpotent presentation. 
Such a presentation can be computed from an arbitrary presentation 
(Prop. \ref{pr:find_malcev}).  During computation, we represent group 
elements in their Mal'cev normal form, which we define below.  Critically, 
converting a general group word to Mal'cev form involves at most a polynomial 
expansion in word length (Theorem \ref{Lem:PolyCoords}), 
and may be performed in logarithmic space (Theorem \ref{th:compute_malcev}).

\subsection{Nilpotent groups and Mal'cev coordinates}\label{Section:Nilpotent}

A group $G$ is called \emph{nilpotent} if it possesses  central series, i.e. a normal series 
\begin{equation}\label{Eqn:CyclicSeries}
G=G_{1}\rhd G_{2}\rhd \ldots \rhd G_{s}\rhd G_{s+1}=1
\end{equation}
such that $G_{i}/G_{i+1}\leq Z(G/G_{i+1})$ for all $i=1,\ldots,s$ where $Z$ denotes the center.  Equivalently, $G$ possesses  a normal series in which $[G,G_{i}]\leq G_{i+1}$ for all $i=1,\ldots,s$.

If $G$ is finitely generated, so are the abelian quotients $G_i/G_{i+1}$, $1\le i\le s$. Let $a_{i1},\ldots,a_{im_i}$ be a standard basis of $G_i/G_{i+1}$, i.e. a generating set in which $G_i/G_{i+1}$ has presentation $\langle a_{i1},\ldots,a_{im_i}| a_{ij}^{e_{ij}}, j\in \mathcal T_i\rangle$ in the class of abelian groups, where 
$\mathcal{T}_{i}\subseteq\{1,\ldots,m_{i}\}$ and $e_{ij}\in\integers_{>0}$. Formally put $e_{ij}=\infty$ for $j\notin \mathcal T_i$. Note that
\[
A=\{a_{11},a_{12},\ldots,a_{sm_s}\}
\]
is a polycyclic generating set for $G$, and we call $A$ a {\em Mal'cev basis associated to the central series}~\eqref{Eqn:CyclicSeries}. 

For convenience, we will also use a simplified notation, in which the generators $a_{ij}$ and exponents $e_{ij}$ are renumbered by replacing each subscript $ij$ with $j+\sum\limits_{\ell<j}m_\ell$, so the generating set $A$ can be written as $A=\{a_1,\ldots, a_m\}$. We allow the expression $ij$ to stand for  
$j+\sum\limits_{\ell<j}m_\ell$ in other notations as well.
We also denote
\[
\mathcal{T} = \{i \;|\; e_{i}<\infty\}.
\]
By the choice of the set $\{a_{1},\ldots,a_{m}\}$ every element $g\in G$ may be written uniquely as a group word of the form
\[
g=a_{1}^{\alpha_{1}}\ldots a_{m}^{\alpha_{m}},
\]
where $\alpha_{i}\in \integers$ and $0\leq \alpha_{i}<e_{i}$ whenever $i\in \mathcal{T}$.  
The $m$-tuple $(\alpha_{1},\ldots,\alpha_{m})$ is called the \emph{coordinate tuple} 
of $g$ and is denoted $\coords{g}$, and the expression $a_{1}^{\alpha_{1}}\ldots a_{m}^{\alpha_{m}}$ is called the \emph{(Mal'cev) normal form} of $g$. We also 
denote $\alpha_{i}=\coordsj{g}{i}$. When we need to make a distinction between integers and their binary notation for algotithmic purposes, we refer to the tuple of binary notations for $(\alpha_{1},\ldots,\alpha_{m})$ as \emph{binary coordinate tuple} of $g$.

To a Mal'cev basis $A$ we associate a presentation of $G$ as follows. 
For each $1\le i\le m$, let $n_i$ be such that $a_i\in G_{n_i}\setminus G_{n_i+1}$. If $i\in \mathcal{T}$, then $a_{i}^{e_{i}}\in G_{n_i+1}$, hence a relation 
\begin{equation}\label{stdpolycyclic1}
a_{i}^{e_{i}} = a_{\ell}^{\mu_{i\ell}}\cdots a_{m}^{\mu_{im}}
\end{equation}
holds in $G$ for some $\mu_{ij}\in\integers$ and $\ell>i$ such that $a_\ell,\ldots,a_m\in G_{n_i+1}$.  Let $1\leq i<j\leq m$.  Since the series~\eqref{Eqn:CyclicSeries} is central, 
relations of the form 
\begin{eqnarray}
a_{j}a_{i} & = &  a_{i} a_{j}a_{\ell}^{\alpha_{ij\ell}}\cdots a_{m}^{\alpha_{ijm}} \label{stdpolycyclic2}\\
a_{j}^{-1}a_{i} & = & a_{i} a_{j}^{-1} a_{\ell}^{\beta_{ij\ell}}\cdots a_{m}^{\beta_{ijm}} \label{stdpolycyclic3}
\end{eqnarray}
hold in $G$ for some $\alpha_{ijk},\beta_{ijk}\in\integers$ and $l>j$ such that $a_\ell,\ldots, a_m\in G_{n_j+1}$.  

The set of (abstract) symbols $\{a_{1},\ldots,a_{m}\}$ together with relators (\ref{stdpolycyclic1})--(\ref{stdpolycyclic3}) present a group $G'$ that is isomorphic to $G$ 
under the natural isomorphism (relator (\ref{stdpolycyclic3}) may be omitted when $j\in\mathcal{T}$).  
Indeed, \emph{any} presentation 
on symbols $\{a_{1},\ldots,a_{m}\}$ with relators of the form (\ref{stdpolycyclic1}) for any choice of $\mathcal{T}$ 
and relators of the form (\ref{stdpolycyclic2}) and (\ref{stdpolycyclic3}) for all $1\leq i<j\leq m$ defines a 
nilpotent group $G''$ with cyclic central series having terms $\langle a_{i},\ldots,a_{m}\rangle$ for $i=1,\ldots,m$.  Such a presentation is called a \emph{consistent nilpotent presentation for $G''$} if the order of $a_{i}$ 
modulo $\langle a_{i+1},\ldots,a_{m}\rangle$ is precisely $e_{i}$. While presentations of this 
form need not, in general, be consistent, those derived from a central series of a group $G$ as above must be consistent
(if in $G'$ the order of $a_{i}$ is $e_{i}'<e_{i}$, then this fact follows from 
(\ref{stdpolycyclic1})--(\ref{stdpolycyclic3}) and hence a relation for $a_{i}^{e_{i}'}$ would have been written in 
(\ref{stdpolycyclic1})). Consistency of the presentation implies that $G'\simeq G$.



\begin{proposition}\label{pr:find_malcev}
There is an algorithm that, given a finite presentation of a nilpotent group $G$, finds a consistent nilpotent presentation of $G$ and an explicit isomorphism.  The presentation 
may be chosen to be the presentation derived from a Mal'cev basis associated 
with the lower or upper central series of $G$.
\end{proposition}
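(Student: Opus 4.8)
The plan is to derive the presentation from the \emph{nilpotent quotient algorithm} (as developed in \cite{Sim94, Nickel, LGS98}), together with a termination argument that locates the nilpotency class of $G$, since this class is not part of the input. Write $\gamma_1(G)=G$ and $\gamma_{i+1}(G)=[\gamma_i(G),G]$ for the lower central series. For $c=1,2,\dots$, run the nilpotent quotient algorithm with class bound $c$ on the given finite presentation; it outputs a consistent nilpotent presentation $P_c$ of $G/\gamma_{c+1}(G)$, on generators $a_1,\dots,a_m$ forming a Mal'cev basis adapted to the lower central series of that quotient, together with (i) the normal forms in $a_1,\dots,a_m$ of the images of the original generators and (ii) the expressions of each $a_j$ as a power-product of basic commutators of the appropriate weight in the original generators. (Briefly: at class $1$ this is the abelianisation, read off from the relation matrix by Smith normal form; at each higher class one adjoins new generators for a quotient of the free abelian group on the basic commutators of that weight, imposes the Jacobi/collection consistency relations together with the tails forced by the defining relators, and again applies Smith normal form to pin down the true orders $e_i$ and discard redundant generators.) Each such run terminates because $G/\gamma_{c+1}(G)$ is a finitely presented nilpotent group of class at most $c$.

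To terminate the outer loop, observe that the weight-$c$ generators of $P_c$ generate the subgroup $\gamma_c(G)/\gamma_{c+1}(G)$, which is the kernel of the canonical epimorphism $G/\gamma_{c+1}(G)\to G/\gamma_c(G)$; this subgroup is trivial exactly when $P_c$ has no generators of weight $c$, which is decidable from $P_c$. Halt at the first $c$ for which it is trivial. Then $\gamma_c(G)=\gamma_{c+1}(G)$, so $\gamma_c(G)=[\gamma_c(G),G]$, and nilpotency of $G$ forces $\gamma_c(G)=1$; hence $G=G/\gamma_{c+1}(G)$ and $P_c$ is a consistent nilpotent presentation of $G$ associated to its lower central series, with the isomorphism and its inverse furnished by the data (i) and (ii). The loop halts: if $G$ has class $c_0$, the subgroup in question is nontrivial for $c\le c_0$ and trivial for $c=c_0+1$.

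For the upper central series version, start from the presentation $P$ just produced. Using the structure constants \eqref{stdpolycyclic1}--\eqref{stdpolycyclic3} one computes normal forms of products and commutators effectively, so for $g=a_1^{\alpha_1}\cdots a_m^{\alpha_m}$ one can decide whether $[g,a_i]=1$ for every $i$, i.e.\ whether $g\in Z(G)$; hence $Z(G)$, and more generally each term $Z_k(G)$ of the upper central series (computed inductively in $G/Z(G),\,G/Z_2(G),\dots$, each again presented consistently), is a computable subgroup, and the chain $1=Z_0\le Z_1\le\cdots\le Z_{c_0}=G$ stabilises for the same reason as above. Choosing a Mal'cev basis each of whose tails $\langle a_i,\dots,a_m\rangle$ equals some $Z_k$ — obtained by computing standard bases of the abelian sections $Z_{k+1}/Z_k$ — and recomputing the structure constants \eqref{stdpolycyclic1}--\eqref{stdpolycyclic3} with respect to it yields the presentation associated to the upper central series.

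The main obstacle is the content of the first step: that from a finite presentation of a group known to be nilpotent of bounded class one can effectively produce a genuinely \emph{consistent} nilpotent presentation — in particular correctly determining the orders $e_i$ and the structure constants, which is where the Smith normal form and the tail computations carry the weight. This is classical and will be invoked rather than reproved; the one new ingredient is the stabilisation criterion, made necessary by the nilpotency class not being given in advance.
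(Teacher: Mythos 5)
Your proposal is correct, but it takes a genuinely different route from the paper. The paper's proof is a pure enumeration argument: it cites Prop.~3.2 of \cite{BCR91} for an algorithm that \emph{checks} a given nilpotent presentation for consistency, and then simply enumerates all presentations reachable from the input by finite sequences of Tietze transformations until a consistent nilpotent one is found; the lower and upper central series are obtained from \cite{BCR91} (Lem.~2.5 and Cor.~5.3), and the relators \eqref{stdpolycyclic1}--\eqref{stdpolycyclic3} are then found by exhaustive search. You instead run the nilpotent quotient algorithm with class bound $c=1,2,\dots$ and add a stabilisation criterion (no weight-$c$ generators $\Leftrightarrow$ $\gamma_c=\gamma_{c+1}$ $\Rightarrow$ $\gamma_c=1$ by nilpotency) to detect the class, which is not given in the input; this termination argument is sound, and it is the one genuinely new ingredient you supply beyond the classical NQA. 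What each approach buys: the paper's enumeration is logically minimal (it only needs a consistency \emph{checker}, not a constructive quotient algorithm) but gives no effective bound and is the reason the paper later supplies a separate polynomial-time version (Proposition~\ref{pr:find_presentation_poly}, which, like your argument, is constructive and in fact closer in spirit to what you wrote); your version is constructive from the start and would be practical. One small point of looseness on your side: for the upper central series you argue that membership in $Z_k(G)$ is decidable and conclude that $Z_k(G)$ is "a computable subgroup," but producing a generating set (needed to build the adapted Mal'cev basis) requires more than membership decidability — e.g., computing centralizers or kernels along the polycyclic series, as in \cite{BCR91} Cor.~5.3 or \cite{Sim94}. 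This is standard and easily repaired, and is at about the same level of detail as the paper's own one-line citation for the same step.
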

\begin{proof}
Prop. 3.2 of \cite{BCR91} proves that a given nilpotent presentation may be checked for consistency, so to produce a consistent nilpotent presentation
it suffices to enumerate 
presentations obtained from the given presentation of $G$ by finite sequences of Tietze transformations until a consistent nilpotent 
presentation is found (cf. \cite{BCR91} Thm. 3.3).

Further, we may compute both the lower and upper central series from a presentation of $G$. Indeed, each term $\Gamma_{i}=[G,\Gamma_{i-1}]$ of the lower central series is 
precisely the normal closure of the set $\{[g,\gamma]\}_{g,\gamma}$ where $g$ and $\gamma$ 
run over generating sets of $G$ and $\Gamma_{i-1}$, respectively, and as such a generating 
set may be 
computed by \cite{BCR91} Lem. 2.5. The upper central series is computed in Cor. 5.3 of the 
same work.  From either series we may find a Mal'cev basis and then write down, via 
exhaustive search, the required relators (\ref{stdpolycyclic1})-(\ref{stdpolycyclic3}).
\end{proof}

In Proposition~\ref{pr:find_presentation_poly} we employ techniques described in Section~\ref{se:matrix_reduction} to give a polynomial-time version of Proposition~\ref{pr:find_malcev}.

An essential feature of the coordinate tuples for nilpotent groups is that the coordinates of a product 
$(a_{1}^{\alpha_{1}}\cdots a_{m}^{\alpha_{m}})(a_{1}^{\beta_{1}}\cdots a_{m}^{\beta_{m}})$ may be computed as a ``nice'' (polynomial if $\mathcal{T}=\emptyset$) function 
of the integers $\alpha_{1},\ldots,\alpha_{m},\beta_{1},\ldots,\beta_{m}$, and the coordinates of a power 
$(a_{1}^{\alpha_{1}}\cdots a_{m}^{\alpha_{m}})^{l}$ may similarly be computed as a `nice' function of $\alpha_{1},\ldots,\alpha_{m}$ and $l$.  The existence of such polynomial functions for torsion-free nilpotent groups is proved in \cite{Hal69} and \cite{Kargapolov-Merzlyakov}, 
and an explicit algorithm to construct them from a nilpotent presentation of $G$ 
is given in \cite{LGS98}.

If any of the factors $G_{i}/G_{i+1}$ are finite (which must occur when $G$ has torsion), the coordinate functions also involve the extraction 
of quotients and remainders modulo $e_{i}$.
For each $i\in \mathcal{T}$, we define functions $r_{i}:\integers\rightarrow\{0,1,\ldots,e_{i}-1\}$ and 
$s_{i}:\integers\rightarrow\integers$ by the 
decomposition 
\[
t = r_{i}(t) + s_{i}(t) e_{i},
\]
where $t\in \integers$.  Let $\mathcal{F}_{\rationals}(n)$ denote the set of all functions 
$f:\integers^{n}\rightarrow\integers$ 
formed as a finite composition of the functions from the set $\{\cdot,+,r_{i},s_{i}\,|\, i\in \mathcal{T}\}$ using constants from $\rationals$, 
where $\cdot$ and $+$ denote multiplication and addition in $\rationals$.

\begin{lemma}\label{le:malcev}
Let $G$ be a nilpotent group with Mal'cev basis $a_{1},\ldots,a_{m}$. 
There exist $p_{1},\ldots,p_{m}\in \mathcal{F}_{\rationals}(2m)$ and 
$q_{1},\ldots,q_{m}\in\mathcal{F}_{\rationals}(m+1)$ satisfying the following properties.  For every $g,h\in G$ and $l\in\integers$, 
writing $\coords{g}=(\gamma_{1},\ldots,\gamma_{m})$ and $\coords{h}=(\delta_{1},\ldots,\delta_{m})$, 
\begin{enumerate}[(i)]
\item $\coordsj{gh}{i} = p_{i}(\gamma_{1},\ldots,\gamma_{m},\delta_{1},\ldots,\delta_{m})$,
\item $\coordsj{g^{l}}{i} = q_{i}(\gamma_{1},\ldots,\gamma_{m}, l)$,
\item if $i\in \mathcal{T}$ then $p_{i} = r_{i}\circ p_{i}'$ and $q_{i}=r_{i}\circ q_{i}'$ for some $p_{i}'\in \mathcal{F}_{\rationals}(2m)$ and 
$q_{i}'\in \mathcal{F}_{\rationals}(m+1)$, and 
\item\label{le:ab_operation} if, for some index $1\le k<m$, $\gamma_{i}=0$ for all $i\leq k$, or $\delta_{i}=0$ for all $i\leq k$, then for all $i\leq k+1$
	\begin{enumerate}[(a)]
	\item $\coordsj{gh}{i}=\gamma_{i}+\delta_{i}$ and $\coordsj{g^{l}}{i}=l \gamma_{i}$ if $i \not\in\mathcal{T}$, and 
	\item $\coordsj{gh}{i}=r_{i}(\gamma_{i}+\delta_{i})$ and $\coordsj{g^{l}}{i}=r_{i}(l \gamma_{i})$ if $i\in\mathcal{T}$.
	\end{enumerate} 
\end{enumerate}
\end{lemma}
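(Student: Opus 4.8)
The plan is to construct the polynomials $p_i$ and $q_i$ directly from the consistent nilpotent presentation~\eqref{stdpolycyclic1}--\eqref{stdpolycyclic3}, proceeding by a ``collection process'' that rewrites an arbitrary product of the generators $a_1,\ldots,a_m$ into normal form, tracking how the exponents transform at each step. First I would treat the torsion-free case $\mathcal{T}=\emptyset$, where we want genuine polynomials over $\rationals$. The key observation is that each relation~\eqref{stdpolycyclic2}--\eqref{stdpolycyclic3} expresses a single commutation $a_j a_i$ or $a_j^{-1}a_i$ as $a_i$ times $a_j^{\pm1}$ times a word in $a_\ell,\ldots,a_m$ with strictly larger indices (all lying in a deeper term of the central series). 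Thus, starting from $a_1^{\gamma_1}\cdots a_m^{\gamma_m}a_1^{\delta_1}\cdots a_m^{\delta_m}$ and collecting the $a_1$'s to the front, then the $a_2$'s, and so on, is a terminating process because each collection move only introduces generators of higher index. To get the \emph{polynomial} form (rather than merely a recursively-defined integer-valued function), I would argue by induction on the nilpotency class, working modulo each term $G_{r+1}$: the coordinates $\gamma_i,\delta_i$ for $a_i\in G_r\setminus G_{r+1}$ contribute to the collected exponents in $G_{r+1}$ only through products of boundedly many such coordinates (the bound being the class $c$), since $[G,G_r]\le G_{r+1}$ forces every correction term to be a commutator, hence one class deeper. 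This is essentially the classical Hall--Mal'cev argument (\cite{Hal69}, \cite{Kargapolov-Merzlyakov}), and I would cite it for the existence of the $p_i,q_i\in\mathcal{F}_\rationals$ in the torsion-free case, or alternatively invoke the explicit construction of \cite{LGS98}.

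Next I would handle torsion, i.e.\ $\mathcal{T}\neq\emptyset$. Here the idea is that after each collection step that places $a_i^{k}$ at its final position, if $i\in\mathcal{T}$ we must reduce $k$ modulo $e_i$, writing $k=r_i(k)+s_i(k)e_i$, keep the $r_i(k)$ copies in place, and push the $s_i(k)$ copies of $a_i^{e_i}$ forward using relation~\eqref{stdpolycyclic1}, which again only involves generators of higher index. Because $r_i$ and $s_i$ are precisely the operations adjoined in the definition of $\mathcal{F}_\rationals$, and because the collection process terminates after finitely many steps (bounded in terms of $m$ and $c$, hence a constant), the resulting coordinate functions lie in $\mathcal{F}_\rationals(2m)$ and $\mathcal{F}_\rationals(m+1)$ respectively. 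Moreover the last reduction applied to coordinate $i$ is exactly an application of $r_i$, which gives part~(iii): $p_i=r_i\circ p_i'$ and $q_i=r_i\circ q_i'$, where $p_i',q_i'$ are the ``unreduced'' versions of the functions.

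For part~(iv), suppose $\gamma_i=0$ for all $i\le k$ (the case $\delta_i=0$ for all $i\le k$ is symmetric, and for powers one takes $g$ with $\gamma_i=0$ for $i\le k$). Then $g\in G_{n}$ where $n=n_{k+1}$, i.e.\ $g$ lies in the term of the central series strictly below $a_1,\ldots,a_k$; collecting $gh$ requires no moves involving $a_1,\ldots,a_k$ at all, since $g$ contributes none of these generators and those of $h$ are already in front. Thus for $i\le k$ the $i$-th coordinate of $gh$ is just $\coordsj{h}{i}$ (reduced mod $e_i$ if needed, but it already is), and for $i=k+1$ the collected exponent of $a_{k+1}$ is $\gamma_{k+1}+\delta_{k+1}$ before reduction, because the only contributions to position $k+1$ come from $h$'s own $a_{k+1}^{\delta_{k+1}}$ and $g$'s $a_{k+1}^{\gamma_{k+1}}$ sliding past the (absent) initial segment of $h$ — any commutator corrections would land strictly below $G_{n_{k+1}+1}$. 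Hence $\coordsj{gh}{k+1}=\gamma_{k+1}+\delta_{k+1}$, or $r_{k+1}(\gamma_{k+1}+\delta_{k+1})$ if $k+1\in\mathcal{T}$; the statements for $g^l$ follow the same way from $g^l=g\cdots g$ ($l$ times) and induction, or from $q_i=r_i\circ q_i'$ with $q_i'$ linear in the relevant range.

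The main obstacle I anticipate is not any single calculation but making the induction on nilpotency class airtight: one must verify that the ``correction words'' produced at each collection step genuinely depend polynomially (through $\mathcal{F}_\rationals$) on the coordinates, with the degree controlled by $c$ rather than by the number of collection steps — the point stressed in the paper's introduction and made precise in Theorem~\ref{Lem:PolyCoords}. Since the full justification of this is classical, the cleanest route is to cite \cite{Hal69,Kargapolov-Merzlyakov,LGS98} for the torsion-free backbone and then argue only the incremental modifications needed to insert the $r_i,s_i$ operations and to read off properties~(iii) and~(iv) from the structure of the collection process.
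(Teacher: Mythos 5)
Your proposal is essentially correct, but it takes a different route from the paper: the paper's entire proof of Lemma~\ref{le:malcev} is a one-line citation to Theorem~5.7 of \cite{MS13}, which establishes the existence of these coordinate functions in the more general setting of nilpotent groups admitting exponents in a binomial PID, whereas you reconstruct the classical collection-process argument. Your route is in fact the one the paper itself gestures at \emph{after} the lemma, where it explains how to actually compute $p_i,q_i$ (the ``Deep Thought'' algorithm of \cite{LGS98} for the torsion-free part, then adjoining $r_i,s_i$ one finite factor at a time as in \S 7.2 of \cite{LGS98}); so your sketch doubles as a constructive account that the paper's citation does not provide. The trade-off is that the genuinely hard point --- that the collected exponents depend on the input coordinates through compositions in $\mathcal{F}_{\rationals}$ of degree controlled by the class $c$ rather than by the number of collection steps --- is still outsourced in your write-up to \cite{Hal69,Kargapolov-Merzlyakov,LGS98}, so in the end both proofs lean on external references for the backbone; your added value is the explicit derivation of parts~(iii) and~(iv). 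On~(iv), your argument is sound but could be tightened: the clean way to see it is that $H_{k+1}=\langle a_{k+1},\ldots,a_m\rangle$ is a term of the refined cyclic \emph{central} series (by the form of relations~(\ref{stdpolycyclic2})--(\ref{stdpolycyclic3}), all correction terms lie in $\langle a_{\ell},\ldots,a_m\rangle$ with $\ell>j$), so whichever of $g,h$ has vanishing first $k$ coordinates lies in the normal subgroup $H_{k+1}$ and is central modulo $H_{k+2}$, whence the $(k+1)$-st coordinate of the product is computed in the cyclic quotient $H_{k+1}/H_{k+2}$ as $\gamma_{k+1}+\delta_{k+1}$ (reduced by $r_{k+1}$ when $k+1\in\mathcal{T}$); this also handles $g^{l}$ for negative $l$ without a separate induction.
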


\begin{proof} This lemma is a special case of Theorem~6.7 of~\cite{MS13}.
\end{proof}

We note that the existence of such functions extends to nilpotent groups admitting exponents in a binomial principal ideal domain, as described in Theorem~5.7 of \cite{MS13}.

Computation of the functions $p_1,\ldots,p_m$, $q_1,\ldots,q_m$, in the case 
when all factors $G_{i}/G_{i+1}$ are infinite, can be done via the ``Deep Thought'' algorithm~\cite{LGS98}. 
If some factors are finite, one may introduce the functions $r_{i}$ and $t_{i}$ 
one factor at a time 
using a procedure similar to that used to compute normal forms given in \S 7.2 of~\cite{LGS98}. It is worth observing that in this construction the values of the numbers $\alpha_{ijl}$, $\beta_{ijl}$, $\mu_{il}$, and $e_{i}$ are not essential: 
one may compute functions in which all of these values appear as variables.


For \S\ref{Section:MatrixReduction} and \S\ref{Section:Homomorphisms}, and the 
remainder of this section, we fix $G$ to be a finitely presented nilpotent group.  
Set $c$ to be the nilpotency class of $G$ and fix a Mal'cev basis $A=\{a_{1},\ldots,a_{m}\}$ associated with 
the lower central series of $G$, with $m$ the size of this basis. 
Algorithmic results in these sections do not take $G$ as part of the input and so we may, 
in light of Proposition \ref{pr:find_malcev} and 
Lemma \ref{le:malcev}, assume that the presentation of $G$ is precisely the 
consistent nilpotent presentation corresponding to $A$ 
and that the functions $p_{i}$, $q_{i}$ are known (computed in advance).  
However, we state algorithmic results without restriction on the presentation of $G$ with the 
understanding that such algorithms will, in pre-computation, translate to such a presentation 
if needed.
In \S\ref{Section:Uniform} 
we provide uniform algorithms, in which $G$ is included in the input.

\subsection{Polynomial bound on the length of normal forms}

Suppose $w=x_{1}x_{2}\cdots x_{n}$ is a word over $A^{\pm}$.  In order to compute the coordinate tuple of $w$, 
one may use the polynomials $p_{i}$ to compute the coordinates of $x_{1}x_{2}$, then use this result to find the coordinates of 
$(x_{1}x_{2})x_{3}$ and 
so on.  However, the resulting computation is an $n$-fold composition of the polynomials and thus we may a priori expect a bound of order $k^{n}$ on the magnitude of the 
coordinates, with $k$ being the maximum degree of the polynomials. This presents an obstacle to logspace computation, as the binary representation of integers that size 
requires linear space.  We show that the coordinates are in fact of order $n^{c}$, where $c$ is the nilpotency class of $G$. The following result is, to our knowledge, folklore (our proof is adapted from unpublished lecture notes by C.Dru\c{t}u and M.Kapovich).



\begin{theorem}\label{Lem:PolyCoords}\label{le:poly_coords}
Let $G$ be a nilpotent group with a lower central series $G=\Gamma_1\rhd \ldots\rhd \Gamma_c\rhd \Gamma_{c+1}=\{1\}$ 
with an associated Mal'cev basis  $A=\{a_{11}, \ldots, a_{cm_c}\}$.
There is a constant $\kappa$, depending only on the
presentation of $G$, such that for every word $w$ over $A^{\pm}$,
%
\begin{equation}\label{Eqn:PolynomialCoordinateBound}
|\coordsj{w}{ij}| \leq \kappa \cdot |w|^{i}
\end{equation}
for all $i=1,\ldots, c$, $1\le j\le m_i$.
\end{theorem}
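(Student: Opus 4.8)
The plan is to induct on $i$, the level in the lower central series, using the structure of the commutator relations to control how coordinates in deeper terms $\Gamma_i$ grow as we process the word letter-by-letter. First I would set up the collection process: given $w=x_1\cdots x_n$ with each $x_t\in A^{\pm}$, I process the letters from left to right, maintaining a partial normal form $a_1^{\alpha_1}\cdots a_m^{\alpha_m}$ for the prefix $x_1\cdots x_t$. Appending $x_{t+1}=a_k^{\pm1}$ requires moving this generator leftward past $a_{k+1},\ldots,a_m$ (which only costs applying relations of type \eqref{stdpolycyclic2}--\eqref{stdpolycyclic3} and changes deeper coordinates), and then, if $k\in\mathcal{T}$ and the exponent reaches $e_k$, applying \eqref{stdpolycyclic1} to carry into $\Gamma_{n_k+1}$. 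The key qualitative fact is that by part \eqref{le:ab_operation} of Lemma~\ref{le:malcev}, inserting a single generator $a_k^{\pm1}$ changes each coordinate $\coordsj{\cdot}{ij}$ with $ij \le k$ by at most $1$, and only coordinates strictly deeper in the series are affected in a more complicated (polynomial) way.

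The heart of the argument is a simultaneous induction on the level $i$, proving the bound $|\coordsj{x_1\cdots x_t}{ij}| \le \kappa_i \cdot t^{i}$ for a suitable constant $\kappa_i$ depending only on the presentation. For the base case $i=1$: the level-1 coordinates live in the abelianization, so after appending $a_k^{\pm1}$ with $a_k$ at level $1$, the relevant coordinate changes by exactly $\pm 1$ (or is reduced mod $e_k$), while appending a deeper generator does not change level-1 coordinates at all by Lemma~\ref{le:malcev}\eqref{le:ab_operation}; hence $|\coordsj{x_1\cdots x_t}{1j}| \le t$, giving $\kappa_1 = 1$. For the inductive step, suppose the bound holds for all levels $< i$. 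When we append a letter and collect it toward its correct position, the increments to a level-$i$ coordinate come from the commutator relations \eqref{stdpolycyclic2}--\eqref{stdpolycyclic3}: each swap of $a_j$ past $a_{j'}$ (with $j' < j$, both at levels summing appropriately below $i$) contributes a term involving the current coordinates at shallower levels, and there are at most $m$ such swaps per inserted letter. By the inductive hypothesis these shallower coordinates are bounded by $O(t^{i-1})$, so each of the $\le n$ letter-insertions contributes $O(t^{i-1})$ to the level-$i$ coordinate; summing over $t=1,\ldots,n$ gives a bound of order $n \cdot n^{i-1} = n^{i}$, absorbing all presentation-dependent constants into $\kappa$. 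One also checks that the carries from relation \eqref{stdpolycyclic1} (reducing an exponent mod $e_k$) only decrease the coordinate being carried and add a bounded-coefficient contribution to deeper coordinates, so they do not spoil the estimate.

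The main obstacle is bookkeeping the commutator collection carefully enough to see that a level-$i$ coordinate truly only accumulates contributions polynomial of degree $i-1$ in $t$ per step, rather than compounding multiplicatively. The danger is that after updating deep coordinates they feed back into subsequent collections; the resolution is that the lower central series is a \emph{filtration} — commuting $a_j$ (level $a$) past $a_{j'}$ (level $b$) produces only generators at level $\ge a+b$, strictly deeper than both — so feedback always flows strictly downward in the series and never inflates a shallower coordinate. This means the recursion on levels is well-founded and the degree-$i$ bound at level $i$ is forced. Making this filtration property precise (using $[G,\Gamma_i]\le\Gamma_{i+1}$, equivalently $[\Gamma_a,\Gamma_b]\le\Gamma_{a+b}$ for the lower central series) and tracking that the number of collection moves per inserted letter is bounded by a constant depending only on $m$ is the crux; once that is in hand, the summation $\sum_{t\le n} O(t^{i-1}) = O(n^i)$ is routine, and choosing $\kappa$ as the maximum of the finitely many $\kappa_i$ completes the proof.
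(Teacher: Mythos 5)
Your overall strategy---a collection argument driven by the filtration property $[\Gamma_a,\Gamma_b]\le\Gamma_{a+b}$, with an induction on the level---is the right one and is the same idea underlying the paper's proof. However, the central quantitative step has a genuine gap. You claim that appending one letter changes a level-$i$ coordinate by $O(t^{i-1})$ \emph{because} ``these shallower coordinates are bounded by $O(t^{i-1})$'' by the level induction. That inference is not valid: the increment to a level-$i$ coordinate is a \emph{polynomial} in the shallower coordinates, not a linear function of them, and a product of quantities each bounded by $O(t^{i-1})$ need not be $O(t^{i-1})$. For instance, if a monomial such as $\gamma_{11}^{\,i}$ (degree $i$ in a level-$1$ coordinate, each factor of size $O(t)$) could occur in the increment, each insertion would contribute $O(t^{i})$ and the sum over $t\le n$ would give $O(n^{i+1})$, destroying the bound. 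What you actually need is a weighted-homogeneity statement: a monomial $\prod_{i',j'}\gamma_{i'j'}^{d_{i'j'}}$ can contribute to the level-$i$ increment only if $\sum i'\,d_{i'j'}\le i-\ell_0$, where $\ell_0\ge 1$ is the level of the appended letter. That statement is true---it is why the theorem holds---but it is precisely the content that must be proved, and your sketch asserts it rather than establishes it. Relatedly, ``at most $m$ swaps per inserted letter'' undercounts the work: each block move past $a_j^{\gamma_j}$ produces a correction \emph{word} lying in a strictly deeper term of the series, and these corrections must themselves be collected, producing a cascade (of depth at most $c$, but of polynomially growing size) whose total contribution at each level is exactly what has to be controlled. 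The torsion carries are also not as innocent as you suggest: reducing an exponent modulo $e_{ij}$ via (\ref{stdpolycyclic1}) injects a word of length $O(t^{i})$ into the deeper part, which must then be re-collected; the paper spends a full page on this.

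The paper sidesteps any explicit weighted-degree analysis of the multiplication polynomials by never tracking coordinates of the uncollected part at all: it collects one level at a time and counts, for the residual word $v$, the number $|v|_n$ of letters at levels $\le n$. A single swap satisfies $|w^{(j+1)}_{k+1}|_{k+\ell}\le|w^{(j)}_{k+1}|_{k+\ell}+C_0|w^{(j)}_{k+1}|_{\ell}$, and iterating this recursion yields binomial sums giving $O(|w|^{k+\ell})$ letters at level $k+\ell$; the coordinate bound then falls out because an exponent is at most a letter count. If you want to salvage your letter-by-letter scheme you must prove the weighted-degree property of the increment polynomials, which amounts to redoing this same counting in different clothing.
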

\begin{proof}
We must show that if
$$a_{11}^{\gamma_{11}}\cdots a_{ij}^{\gamma_{ij}}\cdots a_{cm_c}^{\gamma_{cm_c}},\quad 1\le i\le c,\ 1\le j\le m_i,
$$
is the normal form of $w$, then $|\gamma_{ij}|\le \kappa|w|^i$.

Note that since $[\Gamma_i,\Gamma_j]\le \Gamma_{i+j}$, we have for each 
$a=a_{ik}^{\pm 1}$ and $a'=a_{j\ell}^{\pm 1}$, $1\le i, j\le c$, $1\le k\le m_i$, $1\le \ell\le m_j$,
a commutation relation of the form
\begin{equation}\label{eq:comm_class}
[a,a']= a_{t 1}^{\alpha_{t 1}}a_{t 2}^{\alpha_{t 2}}\cdots a_{c m_c}^{\alpha_{c m_c}},
\end{equation}
where $t\ge i+j$.
Similarly, for each $a=a_{ik}^{\pm 1}$ with $ik\in\mathcal{T}$, we have a relation 
\begin{equation}\label{eq:torsion_class}
a^{e_{ik}}=a_{t 1}^{\mu_{t 1}}a_{t 2}^{\mu_{t 2}}\cdots a_{c m_c}^{\mu_{c m_c}},
\end{equation}
where $t> i$. Put $E=\max\{e_{ik}| ik\in\mathcal{T}\}$ (or $E=1$ if $\mathcal{T}=\emptyset$). Let $C_0\in \mathbb Z$ be greater than the word length of  the right hand sides of all equalities~\eqref{eq:comm_class}, \eqref{eq:torsion_class}. Note that $C_0$ only depends on the presentation of $G$.

For any word $v$ over $A^{\pm}$ and integer $1\leq n\leq c$, 
denote by $|v|_n$ the number of occurrences of letters $a_{11},\ldots, a_{1m_1},\ldots,a_{n1},\ldots,a_{nm_n}$ and their inverses in $v$. We also formally put $|v|_n=0$ for $n\le 0$ and $|v|_{n}=|v|_{c}$ for $n>c$.

{\bf Claim.} For every $1\le k\le c$ there is a constant $C_{k+1}$ that depends only on $k$ and the presentation of $G$ such that for every word $w$ over $A^{\pm}$ the 
corresponding group element can be represented in the form 
\[
w=_{G} a_{11}^{\gamma_{11}}\cdots a_{ij}^{\gamma_{ij}}\cdots a_{km_k}^{\gamma_{km_k}}\cdot w_{k+1},
\]
where
\begin{enumerate}[(A)]
\item $1\le i\le k,\ 1\le j\le m_i$,
\item $|\gamma_{ij}|\le C_{k+1}|w|^i$, and 
\item $0\le \gamma_{ij}< e_{ij}$ if $ij\in\mathcal{T}$, and 
\item $w_{k+1}\in \Gamma_{k+1}$ is a word in $a_{(k+1)1},\ldots, a_{cm_c}$ with $|w_{k+1}|_{k+\ell}\le C_{k+1}|w|^{k+\ell}$ for all $k+1\le k+\ell\le c$.
\end{enumerate}

We prove this claim by induction on $k$. We allow $k=0$ as the base case, 
which holds with $w_{1}=w$ and $C_{1}=1$.

Suppose the claim holds for some $k-1\ge 0$. 
Denote $w^{(0)}_{k+1}=w_{k}$ and push an occurrence of $a_{k1}^{\pm 1}$ in 
this word to the left, using the commutation relations~\eqref{eq:comm_class}. This gives the expression
$$
w^{(0)}_{k+1} =_{G} a_{k1}^{\pm 1}\cdot w^{(1)}_{k+1}.
$$
Notice that for $a=a_{k1}^{\pm 1}$ and $a'=a_{ij}^{\pm 1}$, the right-hand side 
$R$ of \eqref{eq:comm_class} satisfies $|R|_{k+\ell}=0$ for all $i>\ell$ and 
$|R|_{k+\ell}\leq C_{0}$ otherwise.  Therefore swapping $a_{k1}^{\pm 1}$ with $a_{ij}^{\pm 1}$ 
increases the word length, by at most $C_{0}$, only for $i=1,\ldots,\ell$.  So 
$$
|w^{(1)}_{k+1}|_{k+\ell}\le |w^{(0)}_{k+1}|_{k+\ell}+C_0|w^{(0)}_{k+1}|_{\ell}.
$$
Notice that this inequality holds in fact for all $\ell\in\integers$. 

We proceed in the same fashion to move left all occurrences of $a_{k1}^{\pm 1}$, followed 
by all occurrences of $a_{k2}^{\pm 1}$ and so on.  At step $j+1$ we represent $w_{k+1}^{(j)}=_{G} a^{\pm 1}_{ki_j}\cdot w^{(j+1)}_{k+1}$, where $\{i_j\}$ is a non-decreasing sequence.
We similarly get
\begin{equation}\label{eq:pascal_triangle}
|w^{(j+1)}_{k+1}|_{k+\ell}\le |w^{(j)}_{k+1}|_{k+\ell}+C_0|w^{(j)}_{k+1}|_{\ell},
\end{equation}
for all $\ell\in \integers$.
All letters $a_{ki}^{\pm 1}$ are collected on the left in at most $N\le |w_k|_{k}\le C_k|w|^{k}$ steps, which gives
\begin{equation}\label{eq:wk_before_torsion}
w_{k}=_{G} a_{k1}^{\gamma_{k1}}\cdots a_{km_k}^{\gamma_{km_k}}w^{(N)}_{k+1}
\end{equation}
with $w^{(N)}_{k+1}\in \Gamma_{k+1}$.

We immediately see by the induction hypothesis that $|\gamma_{ki}|\le |w_k|_k\le C_{k}|w|^k$, which delivers (A) provided $C_{k+1}$ is chosen to be at least $C_{k}$. 
We also find a bound on $|w_{k+1}^{(N)}|_{k+\ell}$, for all 
$k+1\leq k+\ell\leq c$, which will be used to prove (C),
by applying 
\eqref{eq:pascal_triangle} repeatedly. Namely, when we apply \eqref{eq:pascal_triangle} $j$ times, $1\le j\le N$, we obtain
\begin{eqnarray*}
|w_{k+1}^{(N)}|_{k+\ell} & \le & |w_{k+1}^{(N-1)}|_{k+\ell}+C_0 |w_{k+1}^{(N-1)}|_\ell \\ 
&\le&
\left(|w_{k+1}^{(N-2)}|_{k+\ell}+C_0 |w_{k+1}^{(N-2)}|_\ell\right)+C_0\left(|w_{k+1}^{(N-2)}|_{\ell}+C_0 |w_{k+1}^{(N-2)}|_{\ell-k}
\right) \\
&=& 
|w_{k+1}^{(N-2)}|_{k+\ell}+2C_0 |w_{k+1}^{(N-2)}|_\ell+C_0^2 |w_{k+1}^{(N-2)}|_{\ell-k}\\
&\le& 
\ldots \\
&\le& 
\genfrac(){0pt}{0}{j}{0} |w_{k+1}^{(N-j)}|_{k+\ell}+ 
\ldots+\genfrac(){0pt}{0}{j}{j}C_0^j|w_{k+1}^{(N-j)}|_{k+\ell-jk} \\
&=& 
\sum_{\iota\le\frac{\ell}{k}}
\genfrac(){0pt}{0}{j}{\iota}C_0^\iota|w_{k+1}^{(N-j)}|_{k+\ell-\iota k}.
\end{eqnarray*}
For $j=N$, this yields
\begin{eqnarray*}
|w_{k+1}^{(N)}|_{k+\ell} & \le &
C_0^{c/k}\sum_{\iota\le\frac{\ell}{k}}
\genfrac(){0pt}{0}{N}{\iota}|w_{k+1}^{(0)}|_{k+\ell-\iota k} =
C_0^{c/k}\sum_{\iota\le\frac{\ell}{k}}\genfrac(){0pt}{0}{N}{\iota}|w_k|_{k+\ell-\iota k}\\
&\le& 
C_0^{c/k}\sum_{\iota\le\frac{\ell}{k}}N^\iota C_k|w|^{k+\ell-\iota k}.
\end{eqnarray*}
Now we recall that $N\le C^k|w|^k$, which gives
\begin{eqnarray*}
|w_{k+1}^{(N)}|_{k+\ell} &\le& 
C_0^{c/k}\sum_{\iota\le\frac{\ell}{k}}C_k^\iota|w|^{\iota k} C_k|w|^{k+\ell-\iota k} \\ 
&\le&
\widehat{C}_{k+1}|w|^{k+\ell}.
\end{eqnarray*}

Before obtaining (C), we must first reduce certain exponents to obtain (B), then repeat 
the collection process described above. Consider the word $a_{k1}^{\gamma_{k1}}\cdots a_{km_k}^{\gamma_{km_k}}$ and, for each $ki\in\mathcal{T}$, rewrite using 
\eqref{eq:torsion_class} 
\[
a_{ki}^{\gamma_{ki}}=a_{ki}^{\delta_{ki}}\cdot (a_{ki}^{e_{ij}})^{s_i}=_{G} a_{ki}^{\delta_{ki}} \cdot v_i,
\]
where $0\le \delta_{ki}< e_{ki}$ and $v_{i}$ consists of $s_{i}$ copies of 
the right-hand side of \eqref{eq:torsion_class}.
Note that 
\[
|v_i|\le C_0\cdot |s_i|\le C_{0}\cdot|\gamma_{ki}|\leq C_0 C_{k}\cdot|w|^k.
\]
For $ki\notin\mathcal{T}$, put $\delta_{ki}=\gamma_{ki}$ and $v_i=1$. Thus the resulting word
\[
\bar w_k= a_{k1}^{\delta_{k1}}v_1\cdots a_{km_{k}}^{\delta_{km_k}}v_{m_k}
\]
has length at most 
\[
N+m_{k}C_{0}C_{k}\cdot|w|^{k}\le C_{k}\cdot|w|^{k}+m_{k}C_{0}C_{k}\cdot|w|^{k}=\bar{C_k}\cdot|w|^k. 
\]
Repeating the collection procedure for the word $\bar w_k$, as above, we obtain that
\[
\bar w_k=_{G} a_{k1}^{\delta_{k1}}\cdots a_{km_{k}}^{\delta_{km_k}} u_{k+1},
\]
where, using \eqref{eq:pascal_triangle} repeatedly as before,
\begin{eqnarray*}
|u_{k+1}|_{k+\ell}&\le&
C_0^{c/k}\sum_{\iota\le\frac{\ell}{k}}\genfrac(){0pt}{0}{N}{\iota}|\bar w_k|_{k+\ell-\iota k}\\
&\le& C_0^{c/k}\sum_{\iota\le\frac{\ell}{k}}(C_{k}|w|^k)^{\frac{\ell}{k}}|\bar w_k|\\
&\le&
C_0^{c/k}\sum_{\iota\le\frac{\ell}{k}}C_{k}^{\frac{\ell}{k}}
|w|^\ell\bar C_k|w|^k\\
&\le&
\widehat{C}'_{k+1}|w|^{k+\ell},
\end{eqnarray*}
for all $k+1\leq k+\ell\leq c$. 
Combining this with with~\eqref{eq:wk_before_torsion}, we get
\[
w_{k}=_{G} a_{k1}^{\delta_{k1}}\cdots a_{km_k}^{\delta_{km_k}}u_{k+1}w^{(N)}_{k+1}.
\]
Thus setting $w_{k+1}=u_{k+1}w^{(N)}_{k+1}$ and $C_{k+1}=\widehat{C}_{k+1}+\widehat{C}'_{k+1}$ delivers (C), completing the 
inductive step and the proof of the claim.

To prove the theorem it is only left to notice that the claim with $k=c$ suffices, which gives $\kappa=C_{c+1}$.
\end{proof}

\begin{remark}\label{Rem:LowerCentral}Observe that the torsion part of the above argument (obtaining (B)) can be adjusted to allow group relations
\[
a^{e_{ik}}=a_{i, k+1}^{\mu_{i, k+1}}a_{i, k+2}^{\mu_{i, k+2}}\cdots a_{c m_c}^{\mu_{c m_c}}\quad \mbox{for } a=a_{ik}^{\pm 1}
\]
in place of~\eqref{eq:torsion_class}, by processing $a_{k1}$, $a_{k2}$, \ldots, $a_{km_k}$ in succession (in that order). Further, note that though we use the lower central series in Theorem \ref{Lem:PolyCoords}, we only 
need the property
\begin{equation}\label{Eqn:CommutatorGamma}
[\Gamma_{i}, \Gamma_{j}]\leq \Gamma_{i+j}.
\end{equation}
Therefore, any central series having this property will suffice, with 
$c$ being replaced by the length of the series.
\end{remark}
In light of the importance of Theorem \ref{Lem:PolyCoords} to our work, we will 
refer to any Mal'cev basis associated with the lower central series of $G$ 
as a \emph{\malcevgen basis}.  Our algorithmic results 
usually assume that a lower central Mal'cev basis of $G$ is given, though 
by Remark \ref{Rem:LowerCentral} one may substitute a central series satisfying 
(\ref{Eqn:CommutatorGamma}).
If one instead uses a polycyclic basis derived from a cyclic central series (called simply a \emph{Mal'cev basis} in the literature) a similar polynomial bound, albeit of a higher degree, takes place (recall that the functions $p_i$ and $q_i$ are described in Lemma~\ref{le:malcev}, and $m$ is the polycyclic length of $G$).

\begin{lemma}\label{Lem:general_poly_coords}
Let $G$ be a finitely generated nilpotent group with Mal'cev basis $A=\{a_{1},\ldots,a_{m}\}$.
There are a constants $\kappa'$ and $\delta$ depending on $p_i$, $q_i$, and $m$ such that for every word $w$ over $A^{\pm}$,
\begin{equation*}
|\coordsj{w}{i}| \leq \kappa' \cdot |w|^{\delta}
\end{equation*}
for all $i=1,\ldots, c$.
\end{lemma}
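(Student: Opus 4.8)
The plan is to reduce to Theorem~\ref{Lem:PolyCoords} by passing to a \emph{lower central} Mal'cev basis and then changing coordinates. Since $G$ is finitely generated nilpotent, the terms of its lower central series are finitely generated, so $G$ admits a lower central Mal'cev basis $B=\{b_1,\dots,b_{m'}\}$, where $m'$ (the number of invariant factors summed over the layers of the lower central series) depends only on the presentation of $G$. Fix such a $B$. As $B$ generates $G$, each $a_i$ equals in $G$ a fixed word $\omega_i$ over $B^{\pm}$; put $c_1=\max_i|\omega_i|$. Then any word $w$ over $A^{\pm}$ with $|w|=n$ is equal in $G$ to the word over $B^{\pm}$ obtained by substituting $\omega_i^{\pm 1}$ for each letter $a_i^{\pm 1}$ of $w$, and this word has length at most $c_1 n$. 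Applying Theorem~\ref{Lem:PolyCoords} to the basis $B$, the coordinates $\gamma_1,\dots,\gamma_{m'}$ of $w$ with respect to $B$ satisfy $|\gamma_j|\le \kappa (c_1 n)^{c}$ for all $j$, a polynomial bound in $n$ of degree $c$.

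It remains to convert the $B$-normal form $b_1^{\gamma_1}\cdots b_{m'}^{\gamma_{m'}}$ of $w$ into the $A$-coordinate tuple $\coords{w}$, using the functions $p_i\in\mathcal F_{\rationals}(2m)$ and $q_i\in\mathcal F_{\rationals}(m+1)$ of Lemma~\ref{le:malcev}. Each $b_j$ has a fixed $A$-coordinate tuple $\coords{b_j}$, so by Lemma~\ref{le:malcev}(ii) we have $\coordsj{b_j^{\gamma_j}}{i}=q_i(\coords{b_j},\gamma_j)$, and by Lemma~\ref{le:malcev}(i) the tuple $\coords{w}=\coords{b_1^{\gamma_1}\cdots b_{m'}^{\gamma_{m'}}}$ is obtained from these by $m'-1$ successive evaluations of the $p_i$. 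The key elementary input is a uniform growth bound for the functions in $\mathcal F_{\rationals}(k)$: by structural induction on its defining expression, every $f\in\mathcal F_{\rationals}(k)$ satisfies $|f(x_1,\dots,x_k)|\le C_f(1+\max_i|x_i|)^{d_f}$ for constants $C_f, d_f$ depending only on the expression for $f$. This is trivial for constants and for coordinate projections, and it is preserved under $+$ (the degree passes to the maximum) and $\cdot$ (degrees add), under $r_i$ (whose values lie in the fixed set $\{0,\dots,e_i-1\}$), and under $s_i$ (since $|s_i(t)|\le |t|/e_i+1$).

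Granting this, I would finish by tracking degrees. Evaluating the $q_i$ once, at inputs of magnitude $O(n^{c})$, gives coordinates $\coordsj{b_j^{\gamma_j}}{i}$ of magnitude $O(n^{c\,d})$, where $d=\max_i d_{q_i}$. Each of the $m'-1$ product steps evaluates the $p_i$ at inputs of the current magnitude, raising the exponent of $n$ by at most a factor $d'=\max_i d_{p_i}$. Hence after all steps $|\coordsj{w}{i}|\le \kappa' n^{\delta}$ with $\delta=c\,d\,(d')^{m'-1}$. Since $c$, $m'$, and the degrees $d_{p_i}, d_{q_i}$ are all determined by the presentation of $G$ — equivalently, by the functions $p_i$, $q_i$ — and the number of product steps is governed by the basis size, the constants $\kappa'$ and $\delta$ depend only on $p_i$, $q_i$, and $m$, as claimed. (Here $m'$ and the finitely many constants $c_1$, $\kappa$, and $\coords{b_j}$ are themselves fixed once the presentation is fixed; by Proposition~\ref{pr:find_malcev} a lower central basis $B$ can even be computed from it.)

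The main obstacle is really a pitfall to avoid: trying to prove the bound by imitating the collection argument of Theorem~\ref{Lem:PolyCoords} directly for an arbitrary cyclic central series. That argument relies essentially on $[\Gamma_i,\Gamma_j]\le\Gamma_{i+j}$, which lets the correction words produced by a single swap descend several levels in the series at once and keeps the total number of collection steps polynomially controlled; an arbitrary cyclic central series underlying $A$ only satisfies $[G,G_i]\le G_{i+1}$, and without the stronger property the collection bookkeeping no longer yields a polynomial bound in any transparent way. Routing through Theorem~\ref{Lem:PolyCoords} via the change of basis sidesteps this, leaving only the routine growth estimate for $\mathcal F_{\rationals}$-functions and the observation that the relevant compositions number $m'$ (a constant), not $n$.
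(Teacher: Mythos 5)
Your proof is correct, but it takes a genuinely different route from the paper's. The paper argues by induction on the polycyclic length $m$: it writes $w = w_{1}a_{1}^{\eta_{1}}w_{2}\cdots w_{r}a_{1}^{\eta_{r}}w_{r+1}$ with each $w_{j}$ a word in $a_{2}^{\pm1},\ldots,a_{m}^{\pm1}$, collects all occurrences of $a_{1}^{\pm1}$ to the left via the commutation relations so that the remainder lies in $G_{2}=\langle a_{2},\ldots,a_{m}\rangle$ with polynomially controlled length, and then invokes the inductive hypothesis for the shorter cyclic series (the collection bookkeeping is explicitly left to the reader). You instead avoid collection in the polycyclic setting entirely: rewrite $w$ over a lower central Mal'cev basis $B$ by a linear-length substitution, apply Theorem~\ref{Lem:PolyCoords} there to get degree-$c$ bounds on the $B$-coordinates, and convert back to $A$-coordinates by a \emph{constant} number ($m'$ exponentiations and $m'-1$ products) of evaluations of the functions $p_{i},q_{i}$ of Lemma~\ref{le:malcev}, controlled by your structural-induction growth bound for $\mathcal{F}_{\rationals}(k)$ --- which is sound, since $+$ and $\cdot$ preserve polynomial growth, $r_{i}$ has bounded range, and $|s_{i}(t)|\leq |t|/e_{i}+1$. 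What your route buys is that the only delicate length estimate is outsourced to Theorem~\ref{Lem:PolyCoords} and the number of function compositions is $m'$ rather than growing with $|w|$; what it costs is importing a second basis, the words expressing each $a_{i}$ over $B$, and the tuples $\coords{b_{j}}$ (all fixed once $G$ and $A$ are fixed, so the dependence of $\kappa'$ and $\delta$ only on the group data is preserved, as you note). Your closing observation about why a naive imitation of the collection argument fails for a general cyclic central series --- the absence of $[\Gamma_{i},\Gamma_{j}]\leq\Gamma_{i+j}$ --- correctly identifies the reason the paper's Theorem~\ref{Lem:PolyCoords} argument does not transfer verbatim, though the paper's actual (sketched) induction on $m$ is a third way around that obstacle.
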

\begin{proof}
Assume the lemma holds for all nilpotent groups having a cyclic series of length $m-1$, in particular for $G_{2}=\langle a_{2},\ldots,a_{m}\rangle$. 
Write $w$ in the form 
\begin{equation}
w = w_{1}a_{1}^{\eta_{1}}w_{2}a_{1}^{\eta_{2}}\cdots w_{r}a_{1}^{\eta_{r}}w_{r+1},
\end{equation}
where each $w_{j}$ is a word (possibly empty) in letters $a_{2}^{\pm 1}, \ldots, a_{m}^{\pm 1}$ and $\eta_{j}\in \integers\setminus\{0\}$ for all $j$. The proof then proceeds via a right-to-left collection process utilizing commutation relations to obtain a word of the form \[
w = a_{1}^{\eta} w_{1}'w_{2}'\cdots w_{r}'w_{r+1},
\]
where $\eta=\sum_{i=1}^{r}\eta_{r}$ and $w_{1}'w_{2}'\ldots w_{r}'w_{r+1}$ is an element of $G_{2}$.
\end{proof}
We use Theorem~\ref{Lem:PolyCoords} over the above statement in subsequent arguments, so we leave the details of the proof to the reader.

We would like to mention that our methods, generally speaking, do not extend to polycyclic groups. Indeed, while every polycyclic group has a set of polycyclic generators, below we show that no polynomial bound for coordinates similar to~\eqref{Eqn:PolynomialCoordinateBound} can be met unless the group is virtually nilpotent.

\begin{proposition} Let $H$ be a polycyclic group with polycyclic generators $A=\{a_1,\ldots,a_m\}$. Suppose there is a polynomial $P(n)$ such that if $w$ is a word over $A^{\pm 1}$ of length $n$ then
\begin{equation*}
|\coordsj{w}{i}| \leq P(n)
\end{equation*}
for all $i=1,2,\ldots,m$. Then $H$ is virtually nilpotent.
\end{proposition}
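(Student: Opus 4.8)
\section*{Proof proposal}

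The plan is to show that the coordinate bound forces $H$ to have polynomial growth, and then invoke the Milnor--Wolf classification of polycyclic groups of polynomial growth. First I would recall that the polycyclic generating set $A$ gives a polycyclic series $H=H_1\rhd H_2\rhd\cdots\rhd H_{m+1}=1$ with $H_i=\langle a_i,\ldots,a_m\rangle$ and each quotient $H_i/H_{i+1}$ cyclic, so every $g\in H$ has a unique normal form $a_1^{\alpha_1}\cdots a_m^{\alpha_m}$ (with $0\le\alpha_i<e_i$ when $H_i/H_{i+1}$ is finite of order $e_i$, and $\alpha_i\in\integers$ otherwise); this is exactly the set-up of \S\ref{Section:Nilpotent} except that the central condition plays no role here, only the polycyclic one. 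In particular the coordinate map $g\mapsto\coords{g}=(\alpha_1,\ldots,\alpha_m)$ is injective. Replacing $P$ by $n\mapsto\max_{0\le k\le n}P(k)$, I may and do assume $P$ is non-decreasing and $P(n)\ge 1$.

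Next I would fix $n$ and let $B_n\subseteq H$ denote the ball of radius $n$ in the word metric, i.e.\ the set of group elements representable by some word over $A^{\pm1}$ of length at most $n$. If $g\in B_n$, pick a word $w$ of length $k\le n$ with $w=_H g$; the hypothesis and monotonicity of $P$ give $|\coordsj{g}{i}|=|\coordsj{w}{i}|\le P(k)\le P(n)$ for all $i=1,\ldots,m$. Hence the injective map $\coords{\cdot}$ sends $B_n$ into the finite box $\{-\lfloor P(n)\rfloor,\ldots,\lfloor P(n)\rfloor\}^m$, so that
\[
|B_n|\le\bigl(2P(n)+1\bigr)^m,
\]
which is a polynomial in $n$ of degree at most $m\cdot\deg P$. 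Therefore the growth function of $H$ with respect to $A$ is bounded above by a polynomial, i.e.\ $H$ has polynomial growth.

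Finally, $H$ is polycyclic, hence finitely generated and solvable, so the Milnor--Wolf theorem (a finitely generated solvable group of polynomial growth is virtually nilpotent; alternatively one may cite Gromov's theorem on groups of polynomial growth) yields that $H$ is virtually nilpotent, as claimed. I do not expect a genuine obstacle in this argument; the only points that need a little care are the reduction to a monotone $P$ (used to pass from words of length exactly $k$ to the ball $B_n$) and the observation that injectivity of the coordinate map, hence the cardinality bound on $B_n$, holds already at the polycyclic level and does not require nilpotency. Everything else is a direct appeal to the cited growth dichotomy.
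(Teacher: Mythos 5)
Your proof is correct and follows essentially the same route as the paper: reduce to a monotone $P$, use injectivity of the coordinate map to bound the ball of radius $n$ by the box $\{-P(n),\ldots,P(n)\}^m$, conclude polynomial growth, and invoke the growth dichotomy (the paper cites Gromov's theorem where you primarily cite Milnor--Wolf, which is a harmless and arguably more economical choice since $H$ is solvable). No gaps.
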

\begin{proof}
Without loss of generality, assume that $P$ is monotone. Let $B_n$ be the ball of radius $n$ centered at $1$ in the Cayley graph of $H$ relative to $A$. Then for every $w\in B_n$, $|\coordsj{w}{i}|$ is bounded by $P(n)$, so $|B_n|\le (2P(n)+1)^m$, i.e., $H$ has polynomial growth. By a result of Gromov~\cite{Gromov_pgrowth:1981}, it follows that $H$ is virtually nilpotent.
\end{proof}

\subsection{Computation of normal forms and the word problem}

Since our algorithms will accept words of $A^{\pm}$ as input, but perform subsequent 
computations using Mal'cev 
coordinates, a necessary first step is to compute these coordinates. The following statement improves on a result in~\cite{EEO}, which proved that  $\mathrm{UT}_{n}(\integers)$ has normal forms computable in logspace.

\begin{theorem}\label{th:compute_malcev}
Let $G$ be a finitely generated nilpotent group with Mal'cev basis $A$.  
There is an algorithm that, given a word $w$ of length $L$ over $A^{\pm}$,
outputs the binary coordinate tuple of
$w$ in space $O(\log L)$ and, simultaneously, time 
$O(L \log^{2} L)$.
\end{theorem}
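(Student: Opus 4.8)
The goal is to compute $\coords{w}$ for a word $w = x_1 x_2 \cdots x_L$ over $A^\pm$ in space $O(\log L)$ and time $O(L\log^2 L)$. The naive approach — fold the multiplication polynomials $p_i$ over the word left-to-right, maintaining a running coordinate tuple — fails on space because, as noted, an $L$-fold composition of the $p_i$ could produce coordinates of magnitude $k^L$, requiring linear space. Theorem~\ref{Lem:PolyCoords} rescues this: the \emph{final} coordinates of any prefix (indeed of $w$ itself) are bounded by $\kappa L^c$, hence fit in $O(\log L)$ bits. So the real task is to arrange the computation so that \emph{every intermediate quantity} we store is similarly polynomially bounded, not just the final answer.

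**Key steps.** First I would observe that if we process $w$ one letter at a time, keeping the running product always in Mal'cev normal form, then at every stage the stored tuple is $\coords{x_1\cdots x_j}$ for some prefix, which by Theorem~\ref{Lem:PolyCoords} (applied to the prefix, of length $\le L$) is bounded by $\kappa L^c$ — so only $O(\log L)$ bits. The subtlety is the cost of one update step: given $\coords{g}$ in normal form and a generator $a_k^{\pm1}$, compute $\coords{g a_k^{\pm1}}$ via the polynomials $p_i$ (and $q_i$ for the $\pm1$ power, or just use $p_i$ with the one-letter tuple). Evaluating a fixed polynomial $p_i \in \mathcal F_{\rationals}(2m)$ on inputs of bit-length $O(\log L)$ takes values that are a priori only bounded by (bit-length)$^{O(1)}$ but — and this is the point to check carefully — the \emph{output} of this particular evaluation is again $\coordsj{ga_k^{\pm1}}{i}$, a coordinate of a prefix of $w$, hence again $\le \kappa L^c$; the only danger is transient blow-up while evaluating the arithmetic expression tree of $p_i$ and the rational-clearing/$r_i,s_i$ operations. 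Since $p_i$ is a \emph{fixed} expression (depending only on $G$, not on $L$), it has a constant number of nodes, so intermediate values in its evaluation are bounded by a constant power of the inputs, i.e. still $O(\log L)$ bits; each arithmetic operation on $O(\log L)$-bit integers costs $O(\log L \cdot \mathrm{polyloglog})$ or simply $O(\log^2 L)$ by schoolbook, and there are $O(1)$ of them per letter. Summing over $L$ letters gives time $O(L\log^2 L)$ and space $O(\log L)$. To produce the normal form \emph{string} $a_1^{\alpha_1}\cdots a_m^{\alpha_m}$ on the output tape, write out each $\alpha_i$ in binary (or unary exponent notation, as the paper's conventions dictate) — this is $O(\log L)$ bits total and writes are to the write-only output tape, so cost nothing in work space.

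**Main obstacle.** The delicate point — and the one I would spend the most care on — is the space accounting for a single update step when $\mathcal T \ne \emptyset$, i.e. when $p_i = r_i \circ p_i'$ with $p_i'$ involving the $r_j, s_j$ of \emph{earlier} coordinates. One must argue that feeding the (polynomially bounded) coordinates of a prefix through the fixed expression $p_i'$ never creates a value larger than a fixed polynomial in $L$ before the final $r_i$ is applied. This follows because $p_i'$ is a fixed composition of $O(1)$ ring operations and $r_j, s_j$ (each of which only shrinks or leaves magnitude essentially unchanged relative to its input, up to the constant $e_j$), applied to inputs of magnitude $O(L^c)$; hence the whole evaluation stays within $O(L^{c'})$ for a constant $c'$ depending only on $p_i$. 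A second, more routine obstacle is bookkeeping the head positions: we must repeatedly re-scan the input to fetch $x_{j+1}$ while the coordinate tuple sits on the work tape — maintaining the index $j$ (an integer $\le L$, so $O(\log L)$ bits) and walking the input head suffices, at a per-letter cost already absorbed into the $O(\log^2 L)$ bound. I would present the algorithm as: (1) initialize the work tape with the coordinate tuple of the empty word; (2) for $j = 1,\dots,L$, read $x_j$, evaluate the fixed expressions $p_1,\dots,p_m$ (using $q_i$ for inverse letters, or reducing to $p_i$) on the current tuple and the one-letter tuple of $x_j$, overwrite the work tape with the result; (3) output the normal form. Correctness is immediate from Lemma~\ref{le:malcev}; the complexity bounds are exactly the two claims above.
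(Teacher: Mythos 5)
Your proposal is correct and follows essentially the same route as the paper: a letter-by-letter left fold maintaining the running prefix coordinates, with Theorem~\ref{Lem:PolyCoords} applied to each prefix to keep every stored tuple at $O(\log L)$ bits, and $O(\log^2 L)$ schoolbook arithmetic per evaluation of the fixed functions $p_i$ giving the $O(L\log^2 L)$ time bound. The extra care you take over transient blow-up inside the fixed expressions $p_i'$ and over input-head bookkeeping is sound but goes beyond what the paper spells out.
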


\begin{proof}
Denote $w=x_1 x_2 \cdots x_L$. 
We hold in memory an array 
$\gamma = (\gamma_{1}, \ldots, \gamma_{m})$, initialized to $(0,\ldots, 0)$, 
which at the end of the algorithm will hold the Mal'cev coordinates of $w$. 
First, we use the functions $\{p_{i}\}_{i}$ to compute the 
coordinates of the product $x_{1}x_{2}$, storing the result in $\gamma$. We then compute, again using the $\{p_{i}\}_{i}$, 
the coordinates of the product $(x_{1}x_{2})\cdot x_{3}$, 
using the saved coordinates of $x_{1}x_{2}$. We continue in this way, performing $m(L-1)$
total evaluations of functions from $\{p_{i}\}_{i}$, and obtain the coordinates of $w$.

Each subword $x_{1}\cdots x_{j}$ has length bounded by $L$, so by Theorem~\ref{Lem:PolyCoords} each of its coordinates may be stored as 
an $O(\log L)$-bit number 
and hence $\gamma$ may be stored in logarithmic space. 
Each evaluation of a function $p_{i}$ involves addition, multiplication, and division (needed to evaluate the functions $r_{k}$ and $s_{k}$)
with $O(\log L)$-bit inputs. The time complexity of these operations is (sub)quadratic 
in the number of bits, hence the overall time complexity is $O(L\log^{2} L)$. 
The standard `schoolbook' arithmetic operations use space linear in the number of 
bits, hence space $O(\log L)$ in this case.
\end{proof}

\begin{remark}\label{re:unary_output}\begin{enumerate}[(a)]
    \item The time complexity is in fact $O(L\cdot f(\log L))$, where $f(k)$ is the complexity  of multiplying two $k$-bit numbers.
Several bounds that are tighter than the $k^{2}$ bound obtained from `long multiplication' are known.
\item \label{li:unary}By employing a binary counting variable, it is straightforward to output the Mal'cev normal form of $w$ as a group word in space $O(\log L)$. However, in that case the algorithm will run in time $O(\max\{L\log^2 L, L^c\})$, where $c$ is the nilpotency class of $G$, because of the size of the output.
\end{enumerate}
\end{remark}

Theorem \ref{th:compute_malcev} of course implies that the word problem 
in $G$ is decidable in logspace.  This was previously known via embedding 
into $\mathrm{UT}_{n}(\integers)$, since \cite{Lipton_Zalc} proved that linear 
groups have \logspace{} word problem.
\begin{corollary}
Every finitely generated nilpotent group has word problem decidable in \logspace{}.
\end{corollary}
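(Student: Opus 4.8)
The plan is to read this off directly from Theorem~\ref{th:compute_malcev}, using two facts recalled in \S\ref{Section:Logspace}: that the class of logspace-computable functions is closed under composition, and that a decision problem lies in \logspace{} exactly when its characteristic function is logspace-computable. Note also that the word problem of a finitely generated group, being logspace-equivalent under any change of finite generating set (the rewriting map is length-linear and computable in logarithmic space), does not depend on the chosen generating set, so it suffices to decide it with respect to the Mal'cev basis $A=\{a_{1},\ldots,a_{m}\}$.

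First I would fix, as precomputation hard-coded into the algorithm (permissible since $G$ is not part of the input), a consistent nilpotent presentation of $G$ with a \malcevgen basis $A$ and the coordinate functions $p_{i},q_{i}$ of Lemma~\ref{le:malcev}; by Proposition~\ref{pr:find_malcev} these exist and are obtainable from any finite presentation of $G$ (recall finitely generated nilpotent groups are finitely presented). Given an input word $w$ over a generating set of $G$, a fixed length-linear logspace substitution of each letter by a word over $A^{\pm}$ produces a word $w'$ over $A^{\pm}$ with $w'=_{G}1$ if and only if $w=_{G}1$ and $|w'|\le\lambda|w|$ for a constant $\lambda$.

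Next I would invoke the algorithm of Theorem~\ref{th:compute_malcev} on $w'$ to compute the Mal'cev coordinate tuple $\coords{w'}=(\gamma_{1},\ldots,\gamma_{m})$ in space $O(\log|w'|)=O(\log|w|)$. Since the Mal'cev normal form is unique, $w=_{G}1$ precisely when $\gamma_{1}=\cdots=\gamma_{m}=0$; as each $\gamma_{i}$ is an $O(\log|w|)$-bit integer, comparing the tuple against $(0,\ldots,0)$ and writing the resulting bit to the output tape costs only logarithmic space. Composing the logspace substitution, the logspace normal-form computation, and the logspace comparison yields a logspace transducer computing the characteristic function of the word problem, so the word problem is in \logspace{}. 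There is no substantive obstacle here — the corollary merely repackages Theorem~\ref{th:compute_malcev}; the only points needing a word of care are that $G$ be finitely presented so that Proposition~\ref{pr:find_malcev} applies in precomputation, and that the change of generating set be both length-linear and logspace, and both are routine.
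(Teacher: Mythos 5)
Your proposal is correct and follows exactly the route the paper intends: the corollary is stated as an immediate consequence of Theorem~\ref{th:compute_malcev} (compute the Mal'cev coordinates in logspace and check whether they all vanish), and your additional remarks about precomputing the consistent nilpotent presentation, changing generating sets, and composing logspace transducers are the routine details the paper leaves implicit. No gap; same approach.
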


Note that Theorem \ref{th:compute_malcev} does not give a solution to the search 
version of the word problem, that is, the problem of writing a trivial element $w$ as a product 
of conjugates of relators.  We give a polynomial-time solution to 
this problem in Theorem~\ref{Thm:WordSearch}. 

While the above `letter-by-letter' application of the functions $p_{i}$ is efficient 
for the initial conversion of input words into coordinates, subsequent 
coordinate computations generally involve a product of a constant number of factors, the coordinates each of which are known, hence it is more efficient to apply the 
polynomials to the coordinates of the factors.

\begin{lemma}\label{Lem:RecomputeMalcev}
Fix a positive integer $k$.  Given integer vectors $v_{1},\ldots, v_{k}$ representing the coordinates of group elements 
$g_{1},\ldots, g_{k}$ and integers $c_{1},\ldots, c_{k}$ one may compute 
\[
\coords{g_{1}^{c_{1}} g_{2}^{c_{2}}\cdots g_{k}^{c_{k}}}
\]
in time $O(L^{2})$ and space $O(L)$, 
where $L$ is the maximum number of bits required to represent any of the $v_{i}$ or any of the $c_{i}$, for $i=1,\ldots,k$.
\end{lemma}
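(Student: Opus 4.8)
The plan is to reduce the computation of $\coords{g_{1}^{c_{1}}\cdots g_{k}^{c_{k}}}$ to a bounded number of evaluations of the functions $p_{i}$ and $q_{i}$ from Lemma~\ref{le:malcev}, and to check that each such evaluation, when applied to integer inputs of bit-size at most $O(L)$, produces output of bit-size $O(L)$ and runs in time $O(L^{2})$ and space $O(L)$. First I would use the $q_{i}$ to compute $\coords{g_{j}^{c_{j}}}$ from $v_{j}$ and $c_{j}$ for each $j=1,\ldots,k$; then, left to right, I would use the $p_{i}$ to accumulate the product: compute $\coords{g_{1}^{c_{1}}g_{2}^{c_{2}}}$, then $\coords{(g_{1}^{c_{1}}g_{2}^{c_{2}})g_{3}^{c_{3}}}$, and so on, for a total of $k$ exponentiations and $k-1$ products, i.e. $(2k-1)m$ evaluations of functions from $\mathcal{F}_{\rationals}$, a constant number since $k$ and $m$ are fixed.

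The crux is the bit-size bound on the intermediate coordinates, since each $p_{i}$ (resp.\ $q_{i}$) is only guaranteed to be a \emph{rational} function built from $+,\cdot,r_{j},s_{j}$, so naively composing it with itself $k$ times could, in principle, blow up the bit-length by a constant factor each time — but $k$ is fixed, so even a crude ``each evaluation at most multiplies the bit-size by a constant depending on the fixed polynomials'' argument already gives a final bound of $O(L)$. Concretely: the input coordinates have bit-size $\le L$; a single evaluation of a fixed $f\in\mathcal{F}_{\rationals}(n)$ on integer inputs of bit-size $\le \ell$ yields an integer (coordinates are integers, and the intermediate $r_{j}, s_{j}$ values are computed by integer division) of bit-size $\le \ell + d$ for a constant $d$ depending only on $f$ (the number of arithmetic nodes and the sizes of the rational constants); since the computation tree has constant depth and we chain a constant number $2k-1$ of them, the total additive blow-up is a constant, so all intermediate integers have bit-size $O(L)$. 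I would remark that one does \emph{not} need the sharper Theorem~\ref{Lem:PolyCoords}-style bound here, precisely because the number of factors is a constant rather than growing with the input.

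Given the $O(L)$ bit-size bound, the complexity analysis mirrors that of Theorem~\ref{th:compute_malcev}: each arithmetic operation ($+$, $\cdot$, and the divisions used to evaluate $r_{j}$ and $s_{j}$) on $O(L)$-bit integers costs time $O(L^{2})$ with schoolbook algorithms and space $O(L)$; there are $O(1)$ such operations in total (a fixed number of evaluations, each of a fixed-size expression), so the whole computation runs in time $O(L^{2})$ and space $O(L)$.

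The main obstacle I anticipate is making the bit-size bound \emph{clean} — one must be slightly careful that the intermediate quantities produced inside an evaluation of $f\in\mathcal{F}_{\rationals}$ (e.g.\ the arguments fed to $r_{j}$ and $s_{j}$, and partial products with the rational constants) are themselves integers or have bounded denominators, so that ``integer of bit-size $\le \ell + d$'' is literally true at every node; this is where one invokes that $p_{i}, q_{i}$ arise as genuine coordinate functions (Lemma~\ref{le:malcev}), so that the final outputs are integers, together with the fact that the rational constants appearing are fixed. Once that bookkeeping is in place, everything else is routine.
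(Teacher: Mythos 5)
Your proposal is correct and takes essentially the same approach as the paper, whose entire proof is the one-line observation that since $k$ is fixed the computation amounts to a constant number of arithmetic operations on $O(L)$-bit integers, each doable in quadratic time and linear space; your write-up is simply a careful expansion of that remark. One small inaccuracy: multiplying two $\ell$-bit integers can produce a $2\ell$-bit result, so a single evaluation of a fixed $f\in\mathcal{F}_{\rationals}(n)$ grows the bit-size multiplicatively (by a constant factor depending on the degree of $f$), not additively as your ``$\le \ell+d$'' claim asserts --- but, as you yourself note earlier, the cruder multiplicative bound composed a constant number of times still gives $O(L)$, so the conclusion is unaffected.
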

\begin{proof}
Since $k$ is fixed, this follows immediately from the fact that arithmetic operations 
are computable in quadratic time and linear space.
\end{proof}

\subsection{Compressed-word problems and binary Mal'cev coordinates}
Theorem \ref{Lem:PolyCoords} implies that there is a constant $b$, depending on $G$, such that every coordinate of a word of 
length $L$ may be expressed as a $b\log(L)$-bit number.  Therefore every element $g\in G$ 
which may be, on the one hand, represented by a word of length $L$, may be more efficiently represented as an $m$-tuple of $b\log(L)$-bit 
numbers. In this sense, the specification of a Mal'cev basis provides a natural compression scheme on $G$.

In formulating algorithmic problems over $G$ it is therefore natural to expect that input 
group elements are encoded in this scheme, similar to the expectation 
that elements of $\integers$ are encoded as binary numbers rather than in `unary encoding' as words.
On the other hand, straight-line programs provide a general method to formulate algorithmic problems 
over groups with compressed input, and in the case of nilpotent groups the use of 
straight-line programs eliminates the 
need to specify a particular Mal'cev basis.  

The two schemes are in fact 
equivalent for nilpotent groups: a coordinate tuple $(\alpha_{1},\ldots,\alpha_{m})$ encoded using $L$ bits 
is easily converted to a straight-line program of size $O(L)$ producing the normal form $a_{1}^{\alpha_{1}}\cdots a_{m}^{\alpha_{m}}$, and 
we show below that conversion is also efficient in the opposite direction.

\begin{theorem}\label{Thm:CompressedCoords}
Let $G$ be a finitely generated nilpotent group with  
Mal'cev basis set $A$.
There is an algorithm that, given a straight-line program 
$\mathds{A}$ over $A^{\pm}$, computes in time $O(L^{3})$ the coordinate tuple $\coords{\eval{\mathds{A}}}$, where $L=|\mathds{A}|$.  Each 
coordinate of $\eval{\mathds{A}}$ in the output is expressed as an $O(L)$-bit number.

\end{theorem}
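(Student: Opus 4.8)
The plan is to process the straight-line program bottom-up, computing for each non-terminal $A\in\mathcal{A}$ the coordinate tuple $\coords{\eval{A}}$ as a tuple of binary integers, and storing these tuples in a table indexed by the non-terminals. For a terminal rule $A\to x$ with $x\in A^{\pm}$, the tuple $\coords{x}$ is a constant (a fixed small vector), obtained in constant time. For a rule $A\to BC$, by the time we reach $A$ the tuples $\coords{\eval{B}}$ and $\coords{\eval{C}}$ are already available, and $\eval{A}=\eval{B}\cdot\eval{C}$, so $\coords{\eval{A}}$ is obtained by a single application of the Mal'cev multiplication functions $p_{1},\ldots,p_{m}$ to the concatenation of these two tuples; this is exactly the case $k=2$ of Lemma~\ref{Lem:RecomputeMalcev}. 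Since the non-terminals are ordered so that $B,C<A$ in every production rule, processing them in increasing order guarantees the needed tuples are ready when required.

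The key point controlling the complexity is the bit-length of the integers involved, and here I would invoke Theorem~\ref{Lem:PolyCoords} rather than the naive composition bound. For a non-terminal $A$, the word $\eval{A}$ has length at most $2^{L-1}$ where $L=|\mathds{A}|$, and therefore by Theorem~\ref{Lem:PolyCoords} each coordinate $\coordsj{\eval{A}}{ij}$ is bounded in absolute value by $\kappa\cdot(2^{L-1})^{c}\le \kappa\cdot 2^{cL}$, which is an $O(L)$-bit number (the constants $\kappa$ and $c$ depend only on $G$, which is fixed). So every coordinate tuple in the table is stored using $O(L)$ bits, giving $O(L^{2})$ bits for the whole table; this also establishes the claim about the size of the output. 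Each of the $L$ production rules requires one evaluation of the functions $p_{1},\ldots,p_{m}$ on $O(L)$-bit inputs — by Lemma~\ref{Lem:RecomputeMalcev} (with the role of its ``$L$'' played by our $O(L)$) this costs $O(L^{2})$ time. Summing over the $L$ rules yields total time $O(L^{3})$.

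The one subtlety — and the step I expect to need the most care — is the application of the polynomial bound of Theorem~\ref{Lem:PolyCoords}. That theorem bounds $|\coordsj{w}{ij}|$ in terms of $|w|$ for $w$ a word over $A^{\pm}$, and here $|\eval{A}|$ can be as large as $2^{L-1}$, so the bound $\kappa|w|^{c}$ becomes $\kappa 2^{c(L-1)}$. The essential observation is that this is still $O(L)$ \emph{bits}, precisely because the exponent $c$ in Theorem~\ref{Lem:PolyCoords} is a constant independent of $|w|$ — this is exactly the feature emphasised after the statement of that theorem, and without it (i.e. with only the naive $k^{n}$-type bound obtained by composing the $p_{i}$ $n$ times) the intermediate integers would have exponentially many bits and the algorithm would fail. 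Everything else is routine: the arithmetic is schoolbook addition, multiplication, and division on $O(L)$-bit numbers, which runs in $O(L^{2})$ time and $O(L)$ space, and the bookkeeping over the $O(L)$ non-terminals fits within the stated bounds.
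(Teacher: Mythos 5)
Your proposal is correct and follows essentially the same route as the paper's proof: a bottom-up induction over the non-terminals, using Theorem~\ref{Lem:PolyCoords} to bound each coordinate of $\eval{B}$ by $\kappa 2^{Lc}$ (hence $O(L)$ bits), with one quadratic-time evaluation of the $p_i$ per production rule giving $O(L^{3})$ overall. The only detail the paper adds is the remark that one may assume $A$ is a lower central Mal'cev basis so that Theorem~\ref{Lem:PolyCoords} applies.
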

\begin{proof}
We may assume that $A$ is a lower central Mal'cev basis. 
Let $B$ be any non-terminal of $\mathds{A}$. 
Since $|\eval{B}|< 2^{L}$, Theorem~\ref{Lem:PolyCoords} gives the bound  
\begin{equation}\label{Eqn:CompressedBound}
|\coordsj{\eval{B}}{i}| \leq \kappa 2^{Lc}
\end{equation}
for each $i=1,\ldots,m$ and so $\coordsj{\eval{B}}{i}$ may be expressed as a $O(L)$-bit number. For every $i$, we compute  
$\coordsj{\eval{B}}{i}$ by induction.  
If the production for $B$ has the form $B\rightarrow x$, 
we simply report 0 or $\pm 1$, as 
the case may be.  

Otherwise, we have $B\rightarrow CD$ and we assume that $\coordsj{\eval{C}}{j}$ and $\coordsj{\eval{D}}{j}$ 
have been computed for $j=1,\ldots, m$, and each coordinate is an $O(L)$-bit number. Then we compute  
$\coordsj{\eval{B}}{i}$ by evaluating $p_{i}$ at $\coords{\eval{C}}$ and $\coords{\eval{D}}$.  Since the inputs 
are $O(L)$-bit numbers, evaluation of $p_{i}$ may be done in time (sub)quadratic in $L$.  
Repeating for each of the $L$ non-terminals of $A$ gives the $O(L^{3})$ bound.
\end{proof}

We can now approach compressed-word versions of various algorithmic problems in $G$ by converting straight-line programs to 
Mal'cev coordinates then 
applying algorithms which work with coordinates.  The first of these is the compressed word problem.  A polynomial-time solution to this problem was also observed 
in~\cite{HLM12}, via reduction to $\mathrm{UT}_{n}(\integers)$, and a new result 
in~\cite{KonikLohrey} shows, using the same reduction, that the compressed word problem in any non-trivial finitely generated torsion-free nilpotent group is complete for the logspace counting class $\mathbf C_=\mathbf L$.
\begin{corollary}
The compressed word problem in every finitely generated nilpotent group is decidable in (sub)cubic time.
\end{corollary}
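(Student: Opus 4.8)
The plan is to reduce the compressed word problem directly to the coordinate computation carried out in Theorem~\ref{Thm:CompressedCoords}. An instance of the compressed word problem is a straight-line program $\mathds{A}$ over the symmetrized generating set, and we must decide whether $\eval{\mathds{A}} =_{G} 1$ (equivalently, given two programs $\mathds{A}$, $\mathds{B}$, whether $\eval{\mathds{A}} =_{G} \eval{\mathds{B}}$). Fix once and for all a lower central Mal'cev basis $A$ of $G$; since $G$ is fixed this is precomputed, and an input program over an arbitrary symmetrized generating set is translated to one over $A^{\pm}$ of comparable size in a precomputation step.

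First I would invoke Theorem~\ref{Thm:CompressedCoords} to compute the tuple $\coords{\eval{\mathds{A}}}$ in time $O(L^{3})$, where $L = |\mathds{A}|$, with each of the $m$ coordinates returned as an $O(L)$-bit integer. Next, since the Mal'cev normal form of a group element is unique, $\eval{\mathds{A}} =_{G} 1$ if and only if this coordinate tuple is the zero tuple; testing this requires only scanning $m = O(1)$ integers of $O(L)$ bits each, i.e. $O(L)$ additional time. In the two-program formulation one instead computes $\coords{\eval{\mathds{A}}}$ and $\coords{\eval{\mathds{B}}}$ and compares them componentwise (alternatively, one forms from $\mathds{A}$ and $\mathds{B}$ a single program for $\eval{\mathds{A}}\eval{\mathds{B}}^{-1}$ of size $O(L)$ by concatenating $\mathds{A}$ with a reversed, letter-inverted copy of $\mathds{B}$, and tests triviality). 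Summing the costs, the algorithm runs in time $O(L^{3})$, which is (sub)cubic up to the multiplication-time factor already noted in the remark following Theorem~\ref{th:compute_malcev}.

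There is no real obstacle: the entire computational burden lies in Theorem~\ref{Thm:CompressedCoords}, and the only points needing a sentence of justification are the uniqueness of Mal'cev normal forms (so that equality of group elements is decided by equality of coordinate tuples) and the $O(L)$-bit bound on the output coordinates, which is exactly inequality~\eqref{Eqn:CompressedBound}. I would also remark in passing that the same reduction shows that the compressed membership and conjugacy problems reduce, in cubic time, to their versions phrased in terms of binary coordinate tuples, which is the template for the polynomial-time results obtained later.
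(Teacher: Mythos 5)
Your proof is correct and follows exactly the paper's route: the corollary is stated as an immediate consequence of Theorem~\ref{Thm:CompressedCoords}, namely compute $\coords{\eval{\mathds{A}}}$ in time $O(L^{3})$ and test whether the coordinate tuple is zero, using uniqueness of Mal'cev normal forms. The extra remarks about the two-program formulation and reducing other compressed problems to binary coordinate tuples match the paper's surrounding discussion as well.
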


Throughout this paper we describe several algorithms which take as input 
one or more words over a finitely presented nilpotent group $G$. Each such algorithm 
also comes in two `compressed' versions.

In the `compressed-word' version, all inputs and outputs are straight-line 
programs and the size $L$ of the input is the sum of the sizes of all input 
programs. In the `binary Mal'cev coordinate' version, 
$G$ is provided with a fixed Mal'cev basis $A$ and 
all input and output words are coordinate tuples (relative to $A$)
with each entry written as a binary number. The size $L$ of the input is the total 
number of bits in the input coordinates.

In all cases, the compressed-word 
version works by first computing the Mal'cev coordinates of each 
input straight-line program using Theorem \ref{Thm:CompressedCoords} and then 
invoking the binary Mal'cev coordinate version.  

\section{Matrix reduction, membership problem, and subgroup presentations}
\label{Section:MatrixReduction}

Several algorithmic problems, including the membership problem, 
may be solved by constructing 
an integer matrix from coordinate tuples corresponding to the generating set of a 
subgroup and performing a process similar to Gaussian elimination over $\integers$ to reduce the matrix to a unique `standard form'. 
This approach was detailed in \cite{Sim94}, but without a 
computational complexity analysis.  We review this reduction process and 
analyze its complexity in \S\ref{se:matrix_reduction}, apply it to solve the membership
problem in \S\ref{se:membership_problem}, and use it to compute presentations for 
subgroups in \S\ref{se:presentation}.  It is also essential for computing 
kernels of homomorphisms and thereby solving the conjugacy problem in 
\S\ref{Section:Homomorphisms}. 

\subsection{Matrix reduction}\label{se:matrix_reduction}

Let $h_{1},\ldots, h_{n}$ be elements of $G$ given in normal form by 
$h_{i} = a_{1}^{\alpha_{i1}}\cdots a_{m}^{\alpha_{im}}$, 
for $i=1,\ldots,n$, and let $H=\langle h_{1},\ldots,h_{n}\rangle$.
To the tuple $(h_{1},\ldots,h_{n})$ we associate the \emph{matrix of coordinates}
\begin{equation}\label{Eqn:CoordinateMatrix}
A=
\left(\begin{array}{ccc}
\alpha_{11} &\cdots & \alpha_{1m} \\
\vdots&\ddots & \vdots\\
\alpha_{n1} & \cdots & \alpha_{nm}
\end{array}\right),
\end{equation}
and conversely, to any $n\times m$ integer matrix, we associate an $n$-tuple of elements of $G$, whose Mal'cev coordinates are given as the rows of the matrix, and the subgroup $H$ generated 
by the tuple.
For each $i=1,\ldots,n$ where row $i$ is non-zero, let $\pivot{i}$ be the column of the first non-zero entry (`pivot') in row $i$.
The sequence $(h_{1},\ldots,h_{n})$ is said to be in \emph{standard form} if the matrix of coordinates $A$ is in row-echelon form with no zero rows and its pivot columns are maximally reduced, i.e. if $A$ satisfies the following properties:
\begin{enumerate}[(i)]
\item all rows of $A$ are non-zero (i.e. no $h_{i}$ is trivial),\label{li:std_nontrivial}
\item $\pivot{1} < \pivot{2} < \ldots < \pivot{s}$ (where $s$ is the number of pivots),\label{li:std_echelon}
\item $\alpha_{i \pivot{i}}>0$, for all $i=1,\ldots,n$,\label{li:std_positive}
\item $0\leq \alpha_{k \pivot{i}} < \alpha_{i \pivot{i}}$, for all $1\leq k < i\leq s$, \label{li:std_reduced}
\item  if $\pivot{i}\in\mathcal{T}$, then $\alpha_{i \pivot{i}}$ divides $e_{\pivot{i}}$, for $i=1,\ldots,s$.\label{li:std_torsion}
\end{enumerate}
The sequence is called {\em full} if in addition 
\begin{enumerate}[(i)]\setcounter{enumi}{5}
\item \label{li:std_full}  
$H\cap \langle a_i,a_{i+1},\ldots, a_m\rangle$ is generated by $\{h_{j}\mid \pi_j\ge i\}$, 
for all $1\le i\le m$.
\end{enumerate}

In (\ref{li:std_full}), note that $\{h_{j}\mid \pi_j\ge i\}$ consists of those elements 
having 0 in their first $i-1$ coordinates.  
Let us remark here that (\ref{li:std_full}) 
holds for a given $i$ if and only if the following two properties hold.
\begin{enumerate}[({vi}.i)]
\item For all $1\leq k<j\leq s$ with $\pivot{k}<i$, 
$h_{j}^{h_{k}}$ and $h_{j}^{h_{k}^{-1}}$ are 
elements of $\langle h_{l}\,|\, l>k\rangle$. 
\item For all $1\leq k\leq s$ with $\pivot{k}<i$ and $\pivot{k}\in\mathcal{T}$, 
$h_{k}^{e_{\pivot{k}}/\alpha_{k\pivot{k}}}\in \langle h_{l}\,|\, l>k\rangle$.
\end{enumerate}
Indeed, Lemma \ref{le:malcev}(\ref{le:ab_operation}) 
implies that $h_{j}^{h_{k}}$, $h_{j}^{h_{k}^{-1}}$, and $h_{j}^{e_{\pi_j}/\alpha_{j\pivot{j}}}$ have coordinates 1 through $\pivot{j}$ equal to 0, so the 
forward implication is clear.  Conversely, given an element of 
$H\cap \langle a_i,a_{i+1},\ldots, a_m\rangle$, written as a product of generators of $H$, 
one may first reduce exponents of $h_{1}$ using (vi.ii) 
(if $\pivot{1}\in\mathcal{T}$), and then, 
observing that the exponent sum of $h_{1}$ must be 0, eliminate all occurrences of 
$h_{1}^{\pm 1}$ in conjugate pairs using (vi.i).  Repeating with 
$h_{2},h_{3},\ldots, h_{k}$ where $\pivot{k}<i\leq\pivot{k+1}$, we obtain a word 
in generators $\{h_{j}\,|\,\pivot{j}\geq i\}$.

The importance of full sequences is described in the lemma below, which can be 
found in \cite{Sim94}, Propositions 9.5.2 and 9.5.3.
\begin{lemma}
\label{Lem:UniqueStandardForm}
Let $H\leq G$.  There is a unique full sequence $U=(h_{1},\ldots,h_{s})$ that generates 
$H$.  Further, 
\[
H=\{h_{1}^{\beta_{1}}\cdots h_{s}^{\beta_{s}} \,|\, \beta_{i}\in\integers \mbox{ and $0\leq \beta_{i}<e_{\pivot{i}}$ if $\pivot{i}\in\mathcal{T}$}\}
\]
and $s\leq m$.
\end{lemma}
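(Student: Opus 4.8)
The plan is to establish existence, uniqueness, the normal-form description, and the bound $s \le m$ in sequence, using the matrix-reduction process from \S\ref{se:matrix_reduction} as the engine for existence and the properties (\ref{li:std_nontrivial})--(\ref{li:std_full}) of a full sequence to force uniqueness.

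\textbf{Existence.} First I would obtain a sequence in standard form: starting from the coordinate matrix of $(h_1,\ldots,h_n)$, apply the Gaussian-elimination-over-$\integers$ process referenced in \S\ref{se:matrix_reduction} — at each pivot column use integer row operations (and, when $\pivot{i}\in\mathcal{T}$, reduce the pivot entry to a divisor of $e_{\pivot{i}}$ using the torsion relation \eqref{eq:torsion_class}/\eqref{stdpolycyclic1}) — discarding zero rows, to arrive at a sequence satisfying (\ref{li:std_nontrivial})--(\ref{li:std_torsion}); crucially the row operations correspond to replacing generators by products of generators, so the subgroup generated is unchanged. Then I would upgrade this to a \emph{full} sequence: for each $i$ from $m$ down to $1$, check conditions (vi.i) and (vi.ii); whenever some $h_j^{h_k^{\pm1}}$ or $h_k^{e_{\pivot k}/\alpha_{k\pivot k}}$ fails to lie in $\langle h_l \mid l>k\rangle$, its coordinates $1$ through $\pivot{j}$ (resp. $\pivot{k}$) vanish by Lemma~\ref{le:malcev}(\ref{le:ab_operation}), so it is a new element of $H$ with more leading zero coordinates than $h_k$; adjoin it to the generating set and re-run standard-form reduction. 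Each such step strictly increases $\sum_j \pivot{j}$ (or adds a pivot in a later column), a quantity bounded since there are only $m$ columns, so the process terminates with a full sequence generating $H$.

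\textbf{Normal form and $s \le m$.} Given a full sequence $U=(h_1,\ldots,h_s)$, the equality $H=\{h_1^{\beta_1}\cdots h_s^{\beta_s}\mid 0\le\beta_i<e_{\pivot i}\text{ if }\pivot i\in\mathcal T\}$ follows from the observation already recorded in the excerpt after property (\ref{li:std_full}): any element of $H$ written as a word in the $h_j^{\pm1}$ can be rewritten, by peeling off $h_1$ first (reduce its exponent mod $e_{\pivot1}$ via (vi.ii), then cancel the remaining $h_1^{\pm1}$ in conjugate pairs via (vi.i) since the exponent sum is forced to be $0$ by looking at coordinate $\pivot1$), then $h_2$, and so on, into the claimed ordered form; uniqueness of the exponents $\beta_i$ comes from reading off coordinate $\pivot i$ successively (the echelon condition (\ref{li:std_echelon}) makes $\coordsj{h_1^{\beta_1}\cdots h_s^{\beta_s}}{\pivot i}$ a fixed positive multiple of $\beta_i$ modulo contributions that are already pinned down). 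Since $\pivot1<\pivot2<\cdots<\pivot s$ are distinct columns among $\{1,\ldots,m\}$, we get $s\le m$.

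\textbf{Uniqueness of the full sequence.} Suppose $U=(h_1,\ldots,h_s)$ and $U'=(h_1',\ldots,h_{s'}')$ are both full sequences generating $H$. I would argue that their pivot sets coincide and then that the rows agree, by downward induction on the pivot position: the set of pivot columns is exactly $\{i : H\cap\langle a_i,\ldots,a_m\rangle \supsetneq H\cap\langle a_{i+1},\ldots,a_m\rangle\}$, an invariant of $H$, so $s=s'$ and $\pivot i=\pivot i'$ for all $i$; then for the smallest pivot $\pivot s$, property (\ref{li:std_full}) says $h_s$ and $h_s'$ both generate the cyclic (or finite cyclic) group $H\cap\langle a_{\pivot s},\ldots,a_m\rangle$, and conditions (\ref{li:std_positive}), (\ref{li:std_torsion}) pin down $h_s=h_s'$ (the positive generator, with pivot entry dividing $e_{\pivot s}$ in the torsion case); proceeding upward, at stage $k$ the element $h_k$ is determined modulo $\langle h_{k+1},\ldots,h_s\rangle$ by (\ref{li:std_full}) and (\ref{li:std_positive}), and then made canonical within its coset by the reduction condition (\ref{li:std_reduced}), which forces each coordinate $\alpha_{k\pivot i}$ for $i<k$ into the range $[0,\alpha_{i\pivot i})$. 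I expect the main obstacle to be this last uniqueness argument: one must be careful that the "coset representative pinned down by (\ref{li:std_reduced})" is genuinely well-defined — i.e. that reducing the entry in column $\pivot{k-1}$, then $\pivot{k-2}$, and so on does not disturb previously reduced entries — which relies on the echelon structure and on Lemma~\ref{le:malcev}(\ref{le:ab_operation}) guaranteeing that multiplying by $h_i^{\pm1}$ only affects coordinates from $\pivot i$ onward. This bookkeeping, together with the torsion case where one also works modulo $e_{\pivot i}$, is exactly the content of \cite{Sim94} Props.~9.5.2--9.5.3, so I would present the argument in that spirit and cite it for the routine verifications.
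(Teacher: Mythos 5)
The paper does not actually prove this lemma: its entire justification is the sentence preceding the statement, which attributes it to \cite{Sim94}, Props.~9.5.2 and 9.5.3. So there is no in-paper argument to match; what you have done is reconstruct the standard proof, and your reconstruction is essentially correct and fits the machinery the paper builds around the lemma. Your existence argument is exactly what the matrix-reduction procedure of \S\ref{se:matrix_reduction} (Steps 1--5) carries out; your normal-form argument is the same peeling argument the paper itself records when it reformulates property (\ref{li:std_full}) as (vi.i) and (vi.ii); and your uniqueness argument (the pivot set as an invariant of $H$ read off from the chain $H\cap\langle a_i,\ldots,a_m\rangle$, then induction from the last pivot upward using (\ref{li:std_positive}), (\ref{li:std_reduced}), (\ref{li:std_torsion}) to pin down each row within its coset of $\langle h_{k+1},\ldots,h_s\rangle$) is the argument Sims gives.

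One step is stated too loosely to stand as written: in the existence part you claim the ``adjoin a witness and re-reduce'' loop terminates because each iteration ``strictly increases $\sum_j \pi_j$ (or adds a pivot in a later column).'' That measure misses the case where the adjoined witness has its first nonzero coordinate in an \emph{existing} pivot column $\pi_j$, with value not divisible by the current pivot entry $\alpha_{j\pi_j}$: re-reduction then replaces $\alpha_{j\pi_j}$ by a proper divisor while the set of pivot columns, hence $\sum_j\pi_j$, is unchanged. Termination still holds, but you need a different measure --- for instance, that each of the at most $m$ pivot entries is a positive integer that can only be replaced by a proper divisor finitely often (equivalently, that the subgroups $\langle h_j\mid \pi_j\ge i\rangle$ only grow and $G$ satisfies the maximal condition on subgroups). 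Note that the paper's algorithm avoids the issue entirely by processing pivot columns left to right and appending all (vi.i)/(vi.ii) witnesses for column $\pi_k$ in {\sc Step 4} before moving on; since those witnesses lie in $\langle a_{\pi_k+1},\ldots,a_m\rangle$ (resp.\ have zero coordinates through $\pi_k$), they never disturb columns already processed, so a single pass suffices. With that repair, your proposal is a sound self-contained proof of the lemma.
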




We define three operations on tuples $(h_{1},\ldots,h_{n})$ of elements of $G$, 
and the corresponding operations on the associated matrix, with the goal of converting $(h_{1},\ldots,h_{n})$ to the unique full-form sequence for  
$H=\langle h_{1},\ldots,h_{n}\rangle$.

\begin{enumerate}[(1)]
\item Swap $h_{i}$ with $h_{j}$.  This corresponds to swapping row $i$ with row $j$.\label{row:swap}
\item Replace $h_i$ by $h_ih_j^l$ ($i\neq j,\; l\in\integers$). This corresponds to replacing row $i$ by $\coords{h_ih_j^l}$.\label{row:substraction}
\item Add or remove a trivial element from the tuple. This corresponds to adding or removing a row of zeros; or (\ref{row:trivial}$'$) a row of the form $(0\;\ldots\; 0\; e_i\; \alpha_{i+1}\;\ldots\; \alpha_m)$, where $i\in \mathcal T$ and $a_i^{-e_i}=a_{i+1}^{\alpha_{i+1}}\cdots a_m^{\alpha_m}$.\label{row:trivial}
\end{enumerate}
Clearly, all three of these operations preserve $H$. By combining these operations, we 
may also  
\begin{enumerate}[(1)]\setcounter{enumi}{3}
\item replace $h_i$ with $h_i^{-1}$, and \label{row:inverse}
\item append to the tuple an arbitrary product $h_{i_1}\cdots h_{i_k}$ of elements in the tuple.\label{row:add_linear}
\end{enumerate}
We also note that these operations, with exception of (\ref{row:trivial}$'$), preserve the property that all rows are Mal'cev coordinate tuples. Further, operation (\ref{row:trivial}$'$) is applied only in Step 3 (see below on p.~\pageref{li:step3}), by completion of which that property is regained.

Using the row operations defined above, we show how to reduce any coordinate matrix to its unique full form, thus 
producing the unique full generating sequence for the corresponding subgroup $H$.
While it is not difficult to see that such reduction is possible, the details of the procedure are essential for our complexity 
estimates.  We make use of the following algorithmic fact regarding greatest common divisors.

\begin{lemma}\label{Lem:GCD}
There is an algorithm that, given integers $a_{1},\ldots,a_{n}$ as binary numbers, computes in time $O(L^{3})$ an expression 
\[
x_{1} a_{1} + \ldots + x_{n} a_{n} = d = \gcd(a_{1},\ldots,a_{n})
\]
with $|x_{i}| \leq \frac{1}{2}\max\{|a_{1}|,\ldots, |a_{n}|\}$, where $L$ is the total number of bits in the input.  If $n$ is fixed, the algorithm may be run in space 
$O(L)$.
\end{lemma}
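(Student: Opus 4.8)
The plan is to reduce the problem of computing an integer linear combination realizing $\gcd(a_1,\ldots,a_n)$ to $n-1$ applications of the two-variable extended Euclidean algorithm, and then to bound both the running time and the size of the resulting coefficients. First I would handle the two-variable case: given $a,b$ with $|a|,|b|\le 2^\ell$, the standard extended Euclidean algorithm computes $x,y$ with $xa+yb=\gcd(a,b)$ in $O(\ell)$ division steps, each on numbers of at most $\ell+O(1)$ bits, hence in time $O(\ell^3)$ using schoolbook arithmetic (or $O(\ell^2)$ with a little care, but cubic suffices), and in space $O(\ell)$. Here one must recall the classical bound $|x|\le |b|/(2\gcd(a,b))$ and $|y|\le |a|/(2\gcd(a,b))$ on the Bézout coefficients produced by the algorithm (taking the canonical reduced choice), which in particular gives $|x|,|y|\le\frac12\max\{|a|,|b|\}$ as required.

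Next I would iterate: set $d_1=a_1$ with witness vector $(1,0,\ldots,0)$, and for $k=2,\ldots,n$ compute $d_k=\gcd(d_{k-1},a_k)=u\,d_{k-1}+v\,a_k$ via the two-variable routine, then update the witness vector for $d_k$ by scaling the stored vector for $d_{k-1}$ by $u$ and placing $v$ in coordinate $k$. Since each $d_{k-1}$ divides $a_1$ (actually $d_{k-1}\mid\gcd(a_1,\ldots,a_{k-1})$, and in any case $|d_{k-1}|\le\max_i|a_i|$), every intermediate number stays within $\ell+O(1)$ bits where $\ell=\max_i\lceil\log_2|a_i|\rceil\le L$; there are $n-1$ rounds, each costing $O(\ell^3)\le O(L^3)$ time, so the total is $O(L^3)$ (absorbing the factor $n-1\le L$ is not even needed since $n$ rounds of $\ell^3$ with $n\ell=O(L)$ already gives $O(L^3)$). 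For the space bound when $n$ is fixed: the witness vector has a fixed number $n$ of entries, and I must argue each stays $O(L)$ bits throughout — this is where the coefficient bound is doing real work, so let me address it as the main point below.

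\textbf{Main obstacle: the coefficient bound $|x_i|\le\frac12\max_j|a_j|$.} The subtlety is that naive iteration multiplies the previously accumulated coefficients by $u$ at each step, which a priori could blow up the bit-length linearly in $n$ and destroy the $O(L)$ space bound as well as the stated magnitude bound. The fix is a post-processing reduction step: once $d=\gcd(a_1,\ldots,a_n)$ is known, replace any candidate witness $(x_1,\ldots,x_n)$ by a reduced one using the congruence $\sum x_i a_i \equiv d$ — concretely, for each $i$ with $a_i\ne 0$, reduce $x_i$ modulo $a_i/d$ (i.e.\ replace $x_i$ by $x_i \bmod (a_i/d)$ and compensate by adjusting, say, $x_1$, using that $a_i/d$ times $a_1/\!\gcd$... ) so that $0\le x_i < |a_i/d| \le |a_i|$, which already beats $\max_j|a_j|$; a final symmetric adjustment of at most one coordinate gives $|x_i|\le\frac12\max_j|a_j|$ for all $i$. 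More simply, I would invoke the known fact that the two-variable algorithm already produces coefficients satisfying the half-bound and that composing the updates while re-reducing $u$ modulo $d_{k-1}/d_k$ at each step keeps all stored coefficients below $\max_i|a_i|$ in absolute value; a single final normalization pass, costing $O(n)$ divisions on $O(L)$-bit numbers, i.e.\ $O(L^3)$ time and $O(L)$ space for fixed $n$, yields the stated bound. The only genuinely delicate verification is that this reduction does not increase the total bit-count beyond $O(L)$ — which follows because each reduced $|x_i|$ is bounded by $\max_j|a_j|\le 2^L$, hence $O(L)$ bits, and there are $n=O(1)$ of them.
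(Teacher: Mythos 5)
Your time and space analysis is fine, and the sequential skeleton (iterating the two\-/variable extended Euclidean algorithm) would indeed deliver $d=\gcd(a_1,\ldots,a_n)$ within the stated $O(L^3)$ bound. But the real content of the lemma is the multiplier bound $|x_i|\le\frac12\max_j|a_j|$ \emph{for every} $i$ --- this is what the paper actually uses later (in {\sc Step 1} of the matrix reduction, to control the growth of matrix entries) --- and your argument for it has a genuine gap. You correctly identify that naive composition of the updates blows the coefficients up, but the proposed fix does not work as written. First, the reduction is stated with the wrong modulus: to preserve $\sum_i x_ia_i=d$ while shrinking $x_i$, you must replace $x_i$ by $x_i \bmod (a_j/d)$ for the \emph{compensating} index $j$ (changing $x_i$ by $k\,a_j/d$ and $x_j$ by $-k\,a_i/d$), not by $x_i\bmod (a_i/d)$. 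Second, and more seriously, after reducing $x_2,\ldots,x_n$ to $|x_i|\le\frac12|a_1/d|$ and absorbing all compensations into $x_1$, the only bound you get on the remaining coordinate is
\[
|x_1|\;\le\;\frac{|d|+\sum_{i\ge2}|x_i||a_i|}{|a_1|}\;\le\;1+\frac{(n-1)\max_j|a_j|}{2|d|},
\]
which for $d=1$ exceeds the required $\frac12\max_j|a_j|$ by a factor of about $n-1$. The closing claim that ``a final symmetric adjustment of at most one coordinate'' repairs this is precisely the assertion that needs proof, and no mechanism for it is given.

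The paper does not attempt this reduction at all: it cites Theorem~9 of \cite{MH94} (Majewski--Havas) and uses their \emph{binary tree} construction, in which the $a_i$ are leaves, GCDs of siblings are combined bottom-up, and the multipliers are then assigned top-down so that each step introduces only ``small'' cofactors; the half-of-maximum bound on all $n$ multipliers is the conclusion of that theorem, not something recovered by post-processing an arbitrary witness. So either you should switch to the divide-and-conquer scheme and invoke (or reprove) the Majewski--Havas bound, or you must supply an actual argument that a witness with all coordinates at most $\frac12\max_j|a_j|$ exists and is reachable by your normalization --- the existence of such a witness is a nontrivial statement and is exactly what the cited theorem provides. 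The fixed-$n$ space bound, which you handle correctly in outline, is the easy part once the coefficient bound is in hand.
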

\begin{proof}
We compute the expression using the binary tree method described in \cite{MH94} Thm. 9.  This computation proceeds in two 
phases. In the first or `bottom-top' phase, we place the integers $a_{1},\ldots,a_{n}$ as the leaves of a binary tree, 
and we compute GCDs of the pairs $(a_{1},a_{2}),(a_{3},a_{4}),\ldots,(a_{n-1},a_{n})$, recording each GCD as its expression as a linear combination of $a_{i}$ and $a_{i+1}$ in the parent node.  We then continue up the tree computing the GCDs of the pairs of parents in the same fashion, 
obtaining $d$ at the root.
This involves invoking the extended Euclidean algorithm (or a more efficient algorithm) at most $n-1$ times, each time with inputs bounded 
by $M=\max\{|a_{1}|,\ldots, |a_{n}|\}$.  Each invocation runs in time $O(\log^{2} M)$, hence the entire phase runs in time $O(L^{3})$.

In the second or `top-bottom' phase, we compute the coefficients $x_{1},\ldots,x_{n}$ (satisfying the given bound) 
from the top of the tree downward, 
using `small' coefficients at each step. Each computation uses a fixed number of arithmetic operations, hence this phase 
also runs in time $O(L^{3})$.

For the space complexity, simply observe that when $n$ is fixed the tree has constant 
size and we use logspace arithmetic operations, which must run in time polynomial in 
$\log L$.
\end{proof}

Let $A_{0}$ be a matrix of coordinates, as in (\ref{Eqn:CoordinateMatrix}) above.   We produce matrices $A_{1}, \ldots, A_{s}$, with $s$ the number of pivots in the full form of $A_{0}$, such that for every $k=1,\ldots, s$ the first 
$\pi_k$ columns of $A_{k}$ form a matrix satisfying (\ref{li:std_echelon})-(\ref{li:std_torsion}), the condition (\ref{li:std_full}) is satisfied for all $i<\pi_{k+1}$, and 
$A_{s}$ is the full form of $A_{0}$.  Here we formally denote $\pivot{s+1}=m+1$.

Set $\pivot{0}=0$ and assume that $A_{k-1}$ has been constructed for some $k\geq 1$. In the steps below we construct $A_{k}$.
We let $n$ and $m$ denote the number of rows and columns, respectively, of $A_{k-1}$.   At all times during the computation, $h_{i}$ denotes 
the group element corresponding to row $i$ of $A_{k}$ and $\alpha_{ij}$ denotes 
the $(i,j)$-entry of $A_{k}$, which is $\coordsj{h_{i}}{j}$.  These may change 
after every operation.

\begin{description}
\item [\sc Step 1]
Locate the column  $\pivot{k}$ of the next pivot, which is the minimum integer $\pivot{k-1}<\pivot{k}\leq m$ such that 
$\alpha_{i \pivot{k}}\neq 0$ for at least one $k \leq i \leq n$. If no such integer exists, then $k-1=s$ and 
$A_{s}$ is already constructed.  Otherwise, set $A_{k}$ to be a copy of $A_{k-1}$ and denote $\pivot{}=\pivot{k}$. Compute a linear expression of $d = {\rm gcd}(\alpha_{k\pivot{}}, \ldots, \alpha_{n\pivot{}})$,
\[
d = l_{k}\alpha_{k \pivot{}} + \cdots +  l_{n}\alpha_{n\pivot{}}.
\]
The coefficients $l_{k},\ldots,l_{n}$ must be chosen so that $|l_{i}|\leq M$ for 
all $i$, 
where $M=\max\{|\alpha_{k\pivot{}}|, \ldots, |\alpha_{n\pivot{}}|\}$.
Let $h_{n+1} = h_{k}^{l_{k}} \cdots h_{n}^{l_{n}}$ and note that $h_{n+1}$ has coordinates of the form
\[
\coords{h_{n+1}} = (0,\ldots,0,d, \ldots)
\]
with $d$ occurring in position $\pivot{}$.
Perform operation~(\ref{row:add_linear}) to append $h_{n+1}$ as row $n+1$ of $A_{k}$.
\item [\sc Step 2] For each $i=k, \ldots, n$, perform row operation~(\ref{row:substraction}) to replace row $i$ by 
\[
\coords{h_i\cdot h_{n+1}^{-\alpha_{i\pivot{}}/d}}. 
\]
For each $i=1,\ldots,k-1$, use (\ref{row:substraction}) to replace row $i$ by 
\[
\coords{h_{i} \cdot h_{n+1}^{-\lfloor \alpha_{i\pivot{}}/d\rfloor}}.
\]
Using (\ref{row:swap}), swap row $k$ with row $n+1$.  At this point, properties 
(\ref{li:std_echelon})-(\ref{li:std_reduced}) hold on the first $k$ columns of $A_{k}$.
\item [\sc Step 3]\label{li:step3} If $\pivot{}\in\mathcal{T}$, we additionally ensure condition~(\ref{li:std_torsion}) as follows. Perform row operation~(\ref{row:trivial}$'$), with respect to $\pivot{}$, to append a trivial element 
$h_{n+2}$ as row $(0,\ldots,0,e_{\pivot{}},\ldots)$ to $A_{k}$. 
Let $\delta=\gcd(d,e_{\pivot{}})$ and compute the linear expression $\delta=n_{1}d+n_{2}e_{\pivot{}}$, with 
$|n_{1}|,|n_{2}|\leq\max\{d,e_{\pivot{}}\}$. Let 
$h_{n+3}= h_{k}^{n_{1}} h_{n+2}^{n_{2}}$ and append this row to $A_{k}$, as row $n+3$. Note that 
$\coords{h_{n+3}}=(0,\ldots,0,\delta,\ldots)$, with $\delta$ in position $\pivot{}$.  
Replace row $k$ by $\coords{h_{k}\cdot h_{n+3}^{-d/\delta}}$ and row $n+2$ by 
$\coords{h_{n+2}\cdot h_{n+3}^{-e_{\pivot{}}/\delta}}$, 
producing zeros in column $\pivot{}$ in these rows. Swap row $k$ with row $n+3$.  
At this point, 
(\ref{li:std_echelon}), (\ref{li:std_positive}), and (\ref{li:std_torsion}) hold 
(for the first $\pivot{k}$ columns) but (\ref{li:std_reduced}) 
need not, since the pivot entry is now $\delta$ instead of $d$. 
For each $j=1,\ldots,k-1$, replace row $j$ by $\coords{h_{j}\cdot h_{k}^{-\lfloor \alpha_{j\pivot{}}/\delta\rfloor}}$, 
ensuring (\ref{li:std_reduced}).

\item [\sc Step 4] Identify the next pivot $\pivot{k+1}$, setting $\pivot{k+1}=m+1$ if 
$\pivot{k}$ is the last pivot. 
We now ensure condition (\ref{li:std_full}) for $i< \pi_{k+1}$. 
Observe that Steps 1--3 preserve $\langle h_{j}\,|\,\pivot{j}\geq i\rangle$ for all 
$i<\pivot{k}$. Hence (\ref{li:std_full}) holds in $A_{k}$ for $i<\pivot{k}$ since it 
holds in $A_{k-1}$ for the same range.  Now consider $i$ in the range 
$\pivot{k}\le i<\pivot{k+1}$. It suffices to prove (vi.i) for all $j>k$ and (vi.ii) for 
$\pivot{k}$ only.


To obtain (vi.i), we notice that $h_{k}^{-1}h_{j}h_{k},h_{k}h_{j}h_{k}^{-1}\in \langle h_\ell \mid \ell>k\rangle$ if and only if 
$[h_{j},h_{k}^{\pm 1}]\in \langle h_{\ell}\mid \ell>k\rangle$. 
Further, note that the subgroup generated by the set 
\[
S_{j}=\{1,h_j, [h_j,h_k],\ldots, [h_j,h_k,\ldots,h_k]\}, 
\]
where $h_k$ appears $m-\pivot{k}$ times in the last commutator, is closed under commutation with $h_k$ since if $h_{k}$ appears more than $m-\pivot{k}$ times then 
the commutator is trivial. An inductive argument shows that the subgroup $\langle S_{j}\rangle$ coincides with $\langle h_j^{h_k^\ell}\mid 0\le \ell\le m-\pi_k\rangle$. Similar observations can be made for conjugation by $h_k^{-1}$. Therefore, appending via operation (\ref{row:add_linear}) rows $\coords{h_j^{h_k^\ell}}$ for all 
$1\leq |\ell| \leq m-\pivot{k}$ and all $k< j\le n+3$ delivers (vi.i) for all $j>k$. 
Note that (vi.i) remains true for $i<\pivot{k}$. 

To obtain (vi.ii), in the case  $\pi_{k}\in\mathcal{T}$, we add row $\coords{h_{k}^{e_{k}/\alpha_{k\pi_{k}}}}$. Note that this element commutes with $h_k$ and therefore (vi.i) is preserved.



\item [\sc Step 5] Using (\ref{row:trivial}), eliminate all zero rows.  The matrix $A_{k}$ is now constructed.
\end{description}

In applying row operation (\ref{row:substraction}) or (\ref{row:add_linear}), 
the magnitude of the largest entry in the matrix may increase. It is 
essential to observe that during the matrix reduction algorithm 
the growth of this value is bounded by a polynomial of fixed degree (depending on $G$).

\begin{lemma}
\label{Lem:MatrixBound}
Let $g_{1},\ldots,g_{t}\in G$ and let $R$ be the 
full form of the associated matrix of coordinates.  Then every entry $\alpha_{ij}$ of $R$ is bounded by 
\[
|\alpha_{ij}| \leq C \cdot L^{K}, 
\]
where 
$L = |g_1| + \cdots + |g_t|$ is the total length of the given elements, and 
$K=m(8c^{2})^{m}$ and $C$ are constants depending on $G$.
\end{lemma}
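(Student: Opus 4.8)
The plan is to bound, by induction on $k$, the absolute value of every entry occurring in the matrix $A_k$ — and in every intermediate matrix formed while constructing $A_k$ from $A_{k-1}$ — by $C_k L^{K_k}$, where $C_k$ and $K_k$ are constants depending only on $G$; the lemma then follows from the case $k=s\le m$. The base case concerns $A_0$, whose rows are the coordinate tuples $\coords{g_i}$ of words of length at most $L$, so Theorem~\ref{Lem:PolyCoords} gives $|\alpha_{ij}|\le\kappa L^c$, and we may take $K_0=c$ and $C_0=\kappa$.

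For the inductive step, write $M=C_{k-1}L^{K_{k-1}}$ for the bound on the entries of $A_{k-1}$. The key observation is that every row produced in Steps~1--5 arises from the rows of $A_{k-1}$ by operations~(\ref{row:substraction}) and~(\ref{row:add_linear}), together with a bounded number of appended trivial/relator rows whose coordinates are bounded by a constant of the presentation; consequently each such row, written in Mal'cev normal form, is represented by a word over $A^{\pm}$, and \emph{every exponent to which a row is raised along the way is bounded by $M$ or by a constant depending on $G$}. Indeed, the gcd coefficients $l_i$ in Step~1 satisfy $|l_i|\le M$ by Lemma~\ref{Lem:GCD}; the quotients $\alpha_{i\pivot{k}}/d$, $d/\delta$, $n_1,n_2$, $\lfloor\alpha_{j\pivot{k}}/\delta\rfloor$ are bounded by $M$ because each divisor involved ($d$, $\delta$, or a pivot entry) is at least $1$; the torsion quotients $e_{\pivot{k}}/\delta$ and $e_k/\alpha_{k\pi_k}$ are bounded by the largest torsion order; and the conjugating exponent $\ell$ in Step~4 has $|\ell|<m$. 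Moreover the number of rows stays bounded by a fixed power of $L$ (Step~4 multiplies it by at most $2m$ and Step~5 only deletes zero rows), and this factor, being $\le M$, is harmless. Since each row of $A_{k-1}$ is a word of length $\le mM$, a row of $A_k$ is a word over $A^{\pm}$ whose length is a polynomial in $M$ of degree bounded by the fixed presentation (at worst a constant number of nested exponentiations, each raising the degree by one), hence a \emph{single} application of Theorem~\ref{Lem:PolyCoords} bounds its coordinates by a constant times $M^{8c^2}$; the point of expanding each new row into one word over $A^{\pm}$ and invoking Theorem~\ref{Lem:PolyCoords} only once — rather than composing the polynomials $p_i,q_i$ repeatedly — is that composition would multiply the degree by $c$ at each step and build a tower, whereas the present argument costs only a factor linear in $c$ per operation. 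A careful accounting of Steps~1--5 then yields $K_k\le 8c^2\,K_{k-1}$ and $C_k$ a constant depending on $G$.

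Iterating over the $s\le m$ pivot steps and using $K_0=c\le m$ (as $m=\sum_{i=1}^c m_i\ge c$) gives $K:=K_s\le c(8c^2)^{s}\le m(8c^2)^m$ and $C:=C_s$ a constant depending on $G$, which is the assertion of the lemma. I expect the main obstacle to be exactly the bookkeeping inside the inductive step: one must verify that \emph{no} element is ever raised to a power exceeding the current coordinate bound (or a $G$-constant) — this is where Lemma~\ref{Lem:GCD} is indispensable, since a naive extended-Euclidean computation can output coefficients as large as products of the inputs, which would replace the polynomial bound by an exponential one — and that the number of row operations and appended rows per pivot step, though it may depend on $L$ and $m$, is absorbed into the constant $C$ and into bounded factors in the exponent, rather than compounding into an unbounded iteration.
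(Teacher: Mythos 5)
Your proposal is correct and follows essentially the same route as the paper: induction over the $s\le m$ pivot steps, expressing each newly created row as a single word over $A^{\pm}$ whose length is polynomial in the previous entry bound (using Lemma~\ref{Lem:GCD} to keep the gcd coefficients, and hence all exponents, at most the current maximum entry or a presentation constant), and then applying Theorem~\ref{Lem:PolyCoords} once per row rather than composing the multiplication polynomials. The paper carries out the per-step bookkeeping you defer to "a careful accounting," arriving at a per-step recursion of the form $N \mapsto C'(nN)^{4c^{2}}$ with the row count $n\le(10m)^{k}t\le(10m)^{k}L$, which after $s\le m$ iterations yields the stated exponent $K=m(8c^{2})^{m}$ exactly as in your sketch.
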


\begin{proof}

Denote by $A_{0}$ the $t\times m$ matrix of coordinates associated with $(g_1, \ldots, g_t)$.
Following the matrix reduction algorithm described above, we will bound the entries of 
$A_{k}$ in terms of the entries of $A_{k-1}$ and by induction obtain a bound of the entries of $R=A_s$ in terms of the entries of $A_0$. 

For a given $A_{k-1}$, denote by $n$ the number of rows of $A_{k-1}$ and by $N$ the magnitude of the largest entry, i.e.,  
\[
N = \max \{|\alpha_{ij}| \;|\;1\leq i \leq n, 1\leq j \leq m \}.
\]
Observe that for each $1\leq i\leq n$, the element $h_{i}$ corresponding to row $i$ 
of $A_{k-1}$ has length
$|h_i| = |a_1 ^{\alpha_{i1}} \cdots a_m^{\alpha_{im}}|  \leq mN$.
Now in {\sc Step 1} we append the row $h_{n+1}$, which satisfies
\begin{align}
|h_{n+1}| &= | h_k^{l_k} \cdots h_n^{l_n} | = |l_k| |h_k| + \cdots + |l_n| |h_n|\nonumber \\
&\leq  N \big( |h_k| + \cdots + |h_n| \big) \nonumber\\ 
&\leq  mnN^2. \label{bound:nplus1}
\end{align}

Denote by $\alpha_{ij}'$ the $(i,j)$-entry at the end of {\sc Step~2}. 
Since this number is $\coordsj{h_i h_{n+1}^{-\lfloor\alpha_{i\pi}/d\rfloor}}{j}$, except for 
$i=k$, we have, using 
Theorem~\ref{Lem:PolyCoords}, 
\begin{align}
|\alpha_{ij}'| &\leq 
    \kappa \big| h_ih_{n+1}^{-\lfloor \alpha_{i\pi}/d\rfloor} \big|^{c} = 
    \kappa \left( |h_i| + \left\lfloor\frac{|\alpha_{i\pi}|}{d}\right\rfloor |h_{n+1}| \right)^{c} \nonumber\\
&\leq \kappa \big( mN + N\cdot mnN^2 \big)^{c} 
\leq \kappa\big( 2mnN^3 \big)^{c} \nonumber\\ 
&= \kappa(2m)^{c} \cdot n^{c} N^{3c}. \label{bound:step2}
\end{align}
For $i=k$, the tighter bound (\ref{bound:nplus1}) holds.

Proceeding to {\sc Step 2}, denote $E=\max\{e_{i}\,|\,i\in\mathcal{T}\}$.  
The new rows $h_{n+2}$ and 
$h_{n+3}$ satisfy
\begin{align}
|h_{n+2}| &\leq 2\kappa E^{c} \nonumber\\
|h_{n+3}| &= |h_k^{n_1}h_{n+2}^{n_2}| = |n_1||h_k| + |n_2| |h_{n+2}| \nonumber\\
&\leq (EN)(mnN^{2})+(EN)(2\kappa E^{c}) \nonumber \\
&\leq 2mn\kappa E^{c+1}N^{3}.\label{bound:nplus3}
\end{align}
Let $\alpha_{ij}''$ denote the $(i,j)$-entry of $A_{k-1}$ at the end of {\sc Step 4}, and 
recall that {\sc Step 4} only appends rows to the bottom of the matrix.
For row $n+3$ (row $k$ before swapping) we have, for all $j$,
\begin{align}
|\alpha_{(n+3)j}''|&\leq \kappa|h_k h_{n+3}^{-d/\delta} |^{c} \leq \kappa \left( |h_k| + N |h_{n+3}| \right)^{c} \nonumber\\
&\leq \kappa(mnN^{2}+2mn\kappa E^{c+1}N^{4})^{c}\leq
 \kappa^{c+1}(3mn)^{c}E^{c^2+c}N^{4c}.\label{bound:k}
\end{align}
In row $n+2$ we have
\begin{align}
|\alpha_{(n+2)j}''| &\leq \kappa \left| h_{n+2} h_{n+3}^{-e_{\pi}/\delta} \right|^{c} \leq \kappa \left( |h_{n+2}| + E|h_{n+3}| \right)^{c} \nonumber\\ 
&\leq \kappa(2\kappa E^{c}+2mn\kappa E^{c+2} N^{3})^{c} 
\leq \kappa^{c + 1} (4mn)^{c}E^{c^{2}+2c} N^{3c}.\label{bound:nplus2}
\end{align}
Finally, for rows 1 through $k-1$ notice that each of  
$h_{1},\ldots,h_{k-1}$ has length bounded by $m$ times the bound (\ref{bound:step2}), hence
\begin{align}
|\alpha_{jl}''| &\leq \kappa \left|h_jh_k^{-\lfloor a_{j{\pi}}'/\delta\rfloor}\right|^{c} \leq \kappa( |h_j| + |\alpha_{j{\pi}}'| |h_k| )^{c}\nonumber\\
&\leq \kappa\left(m(2m)^{c}\kappa n^{c}N^{3c}+\kappa(2m)^{c} n^{c}N^{3c}\cdot 2mn\kappa E^{c+1}N^{3}\right)^{c}\nonumber\\
&\leq (6\kappa mnEN)^{4c^{2}+3c}.\label{bound:step3}
\end{align} 
Note that at the conclusion of {\sc Step 3}, bound (\ref{bound:step3}) applies to rows 
1 through $k-1$, bound (\ref{bound:nplus3}) applies to the element $h_{k}$ 
(formerly $h_{n+3}$) in row $k$, 
and the maximum of (\ref{bound:k}) and (\ref{bound:nplus2}) applies to all rows after $k$.


In {\sc Step 4} we append all rows of the type $h_j^{h_k^{l}}$ for $1\leq |l| \leq m-\pi_k$ and $k<j\leq n+3$. The entries of such a row are bounded by 
\begin{align}
|\alpha_{pq}''| &\leq \kappa \left| h_j^{h_k^{l}} \right|^{c} \leq \kappa \big(|h_j| + 2 |l| |h_k| \big)^{c} \nonumber\\
&\leq \kappa\left(m\kappa^{c+1}(4mn)^{c}E^{c^{2}+2c}N^{4c}+ 2mm\cdot 2mn\kappa E^{c+1}N^{3}\right)^{c} \nonumber\\
& \leq C''\cdot (nN)^{4c^{2}},
\end{align}
where $C'=(4\kappa mE)^{c^{3}+2c^{2}+3c+1}$. 
If $\pivot{k}\in \mathcal{T}$, we also append the row $h_k^{e_k/\alpha_{k\pivot{k}}}$, 
and so the entries in the final row $r$ of the matrix satisfy 
\begin{align}
|\alpha_{rq}''| &\leq \kappa |h_k^{e_k/\alpha_{k\pi_k}}|^{c} \nonumber\\
&\leq \kappa (Em\cdot 2mn\kappa E^{c+1}N^{3})^{c}.\nonumber \\
&\leq C'' \cdot (nN)^{4c^{2}}.\label{bound:torsion}
\end{align}
Thus the magnitude of each entry of $A_{k}$ is bounded by $C'\cdot (nN)^{4c^{2}}$, where 
$n$ is the number of rows of $A_{k-1}$ and $N$ bounds the magnitude of the entries of 
$A_{k-1}$.  

Next, notice that {\sc Steps 1-3} add three rows, and {\sc Step 4} adds less than 
$2m(n+3)$ rows.  We may bound the number of rows added by $10m\cdot n$.  Consequently, 
the number of rows of $A_{k}$ is bounded by $(10m)^{k}\cdot t$.

A simple inductive argument now shows that every entry of $R$ is bounded by 
\[
C'' \cdot t^{4c^{2}s}N_{0}^{(4c^{2})^{s}},
\]
where $N_{0}$ is the maximum of the absolute value of entries in $A_{0}$ and $C''$ is a constant depending 
on $m$, $c$, $E$, and $\kappa$.  Now 
\[
N_0 \leq  \max_{i} \{\kappa |g_i|^{c}\} \leq  \kappa L^{c}.
\] 
Moreover, $t\leq L$ and $s\leq m$. Therefore the entries of $R$ are bounded by
\[
C'' L^{4c^{2}m}\kappa^{(4c^{2})^{m}}L^{c(4c^{2})^{m}}=C\cdot L^{c(4c^{2})^{m}+4c^{2}m}.
\]
We simplify the exponent of $L$ by using the bound $K=m(8c^{2})^{m}$.

\end{proof}

Lemma \ref{Lem:MatrixBound} allows us to produce a logspace  version of the 
matrix reduction algorithm.  Note that the matrix reduction algorithm, as presented, 
may use more than logarithmic space since the number of rows $t$ of 
the initial matrix $A_{0}$ may be of the same order as the total input size. 


\begin{theorem}
\label{Lem:Compute_std_form}
Let $G$ be a finitely generated nilpotent group with lower-central Mal'cev basis $A$.
There is an algorithm that, given $h_{1},\ldots,h_{n}\in G$, computes the full form 
of the associated matrix of coordinates (relative to $A$) and hence the 
unique full-form sequence $(g_{1},\ldots,g_{s})$ generating $\langle h_{1},\ldots,h_{n}\rangle$. 
The algorithm runs in space $O(\log L)$, where $L=\sum_{i=1}^{n} |h_{i}|$, 
and in time $O(L \log^{3} L)$, and the total length 
of the elements $g_{1},\ldots,g_{s}$ is bounded by a polynomial function of $L$ 
of degree $m(8c^2)^{m}$.
\end{theorem}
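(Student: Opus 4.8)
The plan is to take the matrix reduction algorithm described above and show it can be executed in logarithmic space by storing only a bounded amount of data at any time, rather than materializing the entire (potentially long) intermediate matrices. The key observation is that Theorem~\ref{Lem:MatrixBound} bounds the entries of the final full-form matrix $R$ by $C\cdot L^{K}$, so each entry of $R$ is an $O(\log L)$-bit number, and since $s\leq m$ is a constant, the full-form sequence $(g_{1},\ldots,g_{s})$ itself occupies only $O(\log L)$ space. The difficulty is that the matrix reduction algorithm as stated appends many rows (up to $(10m)^{k}t$ rows after stage $k$) and these intermediate matrices are too large to store directly.

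First I would recast each stage of the algorithm so that the ``current'' matrix is always represented implicitly: at the start of stage $k$ we have the $O(\log L)$-size partial result consisting of rows $g_{1},\ldots,g_{k-1}$ already in full form, together with a rule for generating, on demand, any row of the working matrix as a product of the original input rows $h_{1},\ldots,h_{n}$ and the already-computed $g_{j}$'s (this is possible because every row operation in the algorithm only ever forms products and conjugates of existing rows). To locate the next pivot column $\pivot{k}$ (Step 1) I would scan through all input rows, computing the relevant coordinate of each via Lemma~\ref{le:malcev} and Theorem~\ref{th:compute_malcev} and keeping a running gcd and a running Bézout-style linear combination; by Lemma~\ref{Lem:GCD} and Lemma~\ref{Lem:MatrixBound} all intermediate gcd values and coefficients stay $O(\log L)$-bit. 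Steps 2--5 then reduce to computing the coordinates of a constant number of specific products $g_{i}h_{n+1}^{e}$, $h_{k}^{n_{1}}h_{n+2}^{n_{2}}$, conjugates $h_{j}^{h_{k}^{\ell}}$, etc., each of which is a product of a constant number of factors whose coordinates are already $O(\log L)$-bit numbers, so Lemma~\ref{Lem:RecomputeMalcev} computes each in time $O(\log^{2}L)$ and space $O(\log L)$.

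For the time complexity, I would count the work stage by stage. There are $s\leq m$ stages, a constant. Within a stage, the dominant cost is the pivot search, which examines each of the (up to $(10m)^{k-1}t\in O(L)$) rows; for each row we must produce its relevant coordinate, but since rows are products of at most a constant number of earlier elements once the $g_{j}$'s are known, and the initial conversion of the $n$ input words into coordinates costs $O(L\log^{2}L)$ by Theorem~\ref{th:compute_malcev}, the total is $O(L\log^{2}L)$ for the conversions plus $O(L\log^{2}L)$ per stage for the gcd accumulation, giving $O(L\log^{2}L)$ over all constantly-many stages. The extra logarithmic factor claimed in the statement ($O(L\log^{3}L)$ rather than $O(L\log^{2}L)$) I would attribute to the gcd computation of Lemma~\ref{Lem:GCD}, which over $O(L)$ values accumulated pairwise costs $O(L\log^{2}L)$ arithmetic but may require $O(\log^{3}L)$-style bookkeeping when combining the Bézout coefficients down a tree of depth $O(\log L)$; alternatively it absorbs the cost of re-deriving row coordinates on demand. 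The space bound is immediate once we observe that at every moment we hold only: the constant-size partial result, one pointer into the input, and a constant number of $O(\log L)$-bit integers.

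The main obstacle I expect is the careful accounting that the implicit row-generation scheme never forces us to store more than a constant number of $O(\log L)$-bit quantities at once --- in particular, verifying that the rows appended in Step~4 (the $h_{j}^{h_{k}^{\ell}}$ for $1\leq|\ell|\leq m-\pivot{k}$, together with $h_{k}^{e_{k}/\alpha_{k\pivot{k}}}$) can all be processed one at a time, each being immediately reduced against the current pivot and then either absorbed or discarded, without ever needing the full list of them in memory simultaneously. Once that bookkeeping is pinned down, the bound on the output length is exactly Lemma~\ref{Lem:MatrixBound}, and the correctness of the full form is Lemma~\ref{Lem:UniqueStandardForm} together with the correctness of the reduction procedure already argued in Steps~1--5.
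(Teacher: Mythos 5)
Your proposal takes a genuinely different route from the paper, and as written it has a gap at the step that is actually the crux of the logspace claim. You keep all $n$ input rows in play throughout and try to represent the working matrix implicitly, which forces Step~1 of each stage to compute a gcd and a B\'ezout combination over $\Theta(L)$ entries. Lemma~\ref{Lem:GCD} only guarantees logarithmic space (and the crucial coefficient bound $|l_i|\leq M$) for a \emph{fixed} number of inputs; a left-to-right ``running B\'ezout'' accumulation over $O(L)$ values does not obviously keep the coefficients small, and without that bound the non-pivot coordinates of $h_{n+1}=h_k^{l_k}\cdots h_n^{l_n}$ --- and hence of every row produced afterwards --- are no longer controlled by Lemma~\ref{Lem:MatrixBound}, whose proof explicitly uses $|l_i|\leq M$. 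Relatedly, your claim that every row of the working matrix can be regenerated on demand as ``a product of a constant number of earlier elements'' is not established: after several stages a row's recipe references elements like $h_{n+1}$ that are themselves $\Theta(L)$-fold products, so you must argue that a constant number of stored $O(\log L)$-bit coordinate vectors suffices to reconstruct any row, and that the row-indexing scheme itself fits in logarithmic space. You flag this bookkeeping as the main obstacle, but it is precisely where the argument is incomplete.

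The paper sidesteps all of this with a simpler structural idea that your proposal is missing: process the generators \emph{incrementally}. Form the coordinate matrix of the first $m$ inputs, reduce it to full form (at most $m$ rows by Lemma~\ref{Lem:UniqueStandardForm}), append the coordinate row of the next input, re-reduce, and repeat. Every intermediate matrix then has a constant number of rows (at most $10m^2$ during a reduction phase), every gcd is over a constant number of $O(\log L)$-bit integers (so Lemma~\ref{Lem:GCD} applies directly, giving $O(\log^3 L)$ per phase and hence $O(L\log^3 L)$ over the $n\leq L$ phases), and Lemma~\ref{Lem:MatrixBound} bounds all entries throughout. No implicit representation or on-demand row regeneration is needed. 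If you want to salvage your approach, note that streaming the inputs one at a time into an already-reduced constant-size matrix is essentially the only known way to make the gcd step logspace-compatible --- which collapses your argument into the paper's.
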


\begin{proof}
\emph{Algorithm description.} 
Since the coordinate matrix for $h_{1},\ldots,h_{n}$ cannot be stored in logarithmic 
space, we adopt a piecewise approach, appending one row at a time and reducing.

Form the coordinate matrix $B_{0}$ of the first $m$ elements, $h_1, \ldots, h_m$ and compute its full form $B_{1}$. Append to $B_{1}$ a row corresponding to the coordinates of $h_{m+1}$ and compute the full form $B_{2}$ of this matrix.  Append $h_{m+2}$ to $B_{2}$ and continue in this way until there are no more rows to append. 
Since the subgroup generated by the rows is preserved under row operations, 
the last matrix thus obtained, $B_{n-m}$, is the full form of the matrix of coordinates of $h_1, \ldots, h_n$. 

\emph{Space and time complexity.} 
We first show that at every step the matrix we work with can be stored in logarithmic space. Since each intermediate matrix $B_{l}$ is in full form, 
Lemma \ref{Lem:UniqueStandardForm} ensures that $B_{l}$ has at most $m$ rows.
Hence the number of rows appended the reduction of $B_{l-1}$ to $B_{l}$ is, as seen in the proof 
of Lemma \ref{Lem:MatrixBound}, bounded by $10m^{2}$.  The size of the working 
matrix is therefore never more than $10m^{2}\times m$ (constant with respect 
to the input).

As for the size of the entries,
Theorem~\ref{Lem:PolyCoords} 
shows that each entry $\alpha_{ij}$ of the matrix $B_{0}$ satisfies  
\[
|\alpha_{ij}| \leq \kappa |h_i|^{c} \leq \kappa L^{c}.
\]
Each entry can be encoded using $O(\log L)$ bits and therefore $B_{0}$ can be 
stored in logarithmic space.

Since the matrix $B_{l}$, $1\leq l\leq n-m$, is 
precisely the (unique) full-form matrix for the sequence $h_{1},\ldots,h_{m+l}$,
Lemma \ref{Lem:MatrixBound} ensures that each entry of $B_{l}$ is bounded 
in magnitude by $C\cdot L^{K}$.  The proof of Lemma \ref{Lem:MatrixBound} shows 
that the bound given there holds during all steps of the reduction algorithm, 
hence during the reduction of $B_{l-1}$ to $B_{l}$ no entry can be greater in 
magnitude than $C(m^{2}CL^{K})^{K}$.
Consequently, all intermediate matrices can be stored in logarithmic space.  

It remains to show that the operations that we use can also be executed in logarithmic space and time $O(L\log^{3} L)$.  Computing the linear 
expression of a GCD is performed only on a bounded 
number of integers (the number being bounded by the number of rows, 
see {\sc Step 1} for the worst case), each encoded with $O(\log L)$ bits. It follows that the procedure for doing so 
described in Lemma 
\ref{Lem:GCD} can be carried out in time $O(\log^{3} L)$ and space 
$O(\log L)$. Computation of coordinates is performed initially for each $h_{i}$ using 
Theorem \ref{th:compute_malcev}.  Subsequent coordinate computations involve finding the coordinates of a product of a bounded number of 
factors raised to powers that are $O(\log L)$-bit integers (no larger than the 
greatest entry in the matrix).  The coordinates of each factor are known, hence the computations are performed in 
time $O(\log^{2} L)$ and space $O(\log L)$ by Lemma \ref{Lem:RecomputeMalcev}. 
The other operations (swapping rows, removing zero rows, locating pivot, etc.) are trivial. 

Finally, for each reduction phase (computing $B_{l}$ from $B_{l-1}$), the number of the above operations (GCD, coordinates, etc.) is bounded.  The number 
of phases is bounded by $n\leq L$, hence the time complexity is $O(L\log^{3} L)$.
\end{proof}

\begin{remark}\label{re:time_complexity}
The factor of $L$ in the time complexity arises most heavily from the fact that the number $n$ of input elements can, in general, only be bounded by $L$.  If $n$ is regarded as a fixed number, the most time-consuming computation is computing the Mal'cev 
coordinates and the overall time complexity is reduced to $O(L\log^{2} L)$. 
This remark also applies to Theorems \ref{th:logspace_membership}, 
\ref{Thm:EffectiveCoherence}, and \ref{Thm:KernelAndPreimage}.  
\end{remark}

\begin{remark}\label{re:binary_output}
The output of the algorithm uses binary numbers to 
encode the exponents which are the Mal'cev coordinates of $g_i$. If one is interested in the expression of $g_i$ as {\em words} in generators of $G$, the time complexity grows according to the time it takes to print out the corresponding words, similarly to Remark~\ref{re:unary_output}(\ref{li:unary}) after Theorem~\ref{th:compute_malcev}. Given the polynomial bound on the total length of $g_i$ provided in the statement, the overall algorithm still runs in polynomial time. 
This remark also applies to all the search problems considered below, that is to Theorems~\ref{th:logspace_membership}, \ref{Thm:EffectiveCoherence}, \ref{Thm:KernelAndPreimage}, \ref{Thm:WordSearch}, \ref{Thm:Centralizer}, and \ref{Thm:CP}.
\end{remark}

In order to solve the compressed-word version of the membership (and other) problems, we require a compressed version of matrix reduction that runs in polynomial time.

\begin{lemma}\label{Lem:compressed_std_form}
The compressed version of Theorem \ref{Lem:Compute_std_form}, in which the input 
elements $h_{1},\ldots,h_{n}$ are given either as straight-line programs or 
binary Mal'cev coordinate tuples, runs in time $O(n L^{3})$ where $L$ is the 
total input size. Each $g_{i}$ is output as a straight-line program or binary 
Mal'cev coordinate tuple, of size polynomial in $L$.
\end{lemma}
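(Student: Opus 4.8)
The plan is to reduce to the binary Mal'cev coordinate version and then re-run the piecewise matrix reduction of Theorem~\ref{Lem:Compute_std_form}, checking only that the larger entries do not break the complexity bound. First, if the $h_i$ are given as straight-line programs, apply Theorem~\ref{Thm:CompressedCoords} to each program to obtain $\coords{h_i}$; since each program has size at most $L$ and the sizes sum to $L$, the total cost is $\sum_i O(L_i^3)=O(L^3)$ and every coordinate is returned as an $O(L)$-bit integer. If the $h_i$ are already binary coordinate tuples, this step is trivial. So in either case we may assume each $h_i$ is given by a coordinate tuple of bit-length $O(L)$, and it remains to bound the cost of running the reduction algorithm on such input.

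The crucial observation is that the entry-size estimate of Lemma~\ref{Lem:MatrixBound} is, in its proof, already obtained in terms of the magnitude of the input coordinates rather than the word length: the full form $R$ of a matrix with $t$ rows whose entries have absolute value at most $N_0$ has entries bounded by $C''\, t^{4c^2 m}\, N_0^{(4c^2)^m}$, and every intermediate matrix produced during the reduction is bounded by a fixed power (depending only on $m$ and $c$) of a quantity of this shape. Here $t\le n\le L$ and $N_0\le\kappa\,2^{Lc}$, and since the exponents $4c^2 m$ and $(4c^2)^m$ are constants depending only on $G$, every integer that appears anywhere in the computation is bounded by $2^{O(L)}$, i.e.\ is an $O(L)$-bit number. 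In particular the output sequence $g_1,\ldots,g_s$ consists of coordinate tuples with $O(L)$-bit entries, hence of size polynomial (indeed linear) in $L$; if element-as-SLP output is desired, each such tuple converts to a straight-line program of size $O(L)$.

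For the running time, execute the piecewise scheme from the proof of Theorem~\ref{Lem:Compute_std_form}: form the coordinate matrix $B_0$ of $h_1,\ldots,h_m$, compute its full form $B_1$, then append $h_{m+1},h_{m+2},\ldots$ one row at a time, re-reducing to full form after each, so that the last matrix $B_{n-m}$ is the full form of $h_1,\ldots,h_n$. By Lemma~\ref{Lem:UniqueStandardForm} together with the row-count bound in the proof of Lemma~\ref{Lem:MatrixBound}, the working matrix always has a bounded number of rows, so each reduction phase involves only a bounded number of GCD computations and Mal'cev-coordinate computations, now on $O(L)$-bit integers. By Lemma~\ref{Lem:GCD} each GCD-with-linear-combination costs $O(L^3)$, and by Lemma~\ref{Lem:RecomputeMalcev} each coordinate computation costs $O(L^2)$; hence one phase costs $O(L^3)$. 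There are $n-m\le n$ phases, for a total of $O(nL^3)$, which absorbs the $O(L^3)$ preprocessing.

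The only point requiring genuine care is the entry-size bound: one must verify that starting from coordinates that are already of size $2^{O(L)}$ (not polynomially bounded), the blow-up through the $s\le m$ sub-steps of a single reduction, and across the $n-m$ phases, remains $2^{O(L)}$. This holds because the exponent $(4c^2)^s$ arising from the induction in Lemma~\ref{Lem:MatrixBound} never exceeds the constant $(4c^2)^m$, and because each $B_l$ is the full form of the \emph{original} elements $h_1,\ldots,h_{m+l}$, so no tower of exponentials is accumulated from phase to phase.
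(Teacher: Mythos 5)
Your proposal is correct and follows essentially the same route as the paper: convert the straight-line programs to coordinate tuples via Theorem~\ref{Thm:CompressedCoords}, observe that Theorem~\ref{Lem:PolyCoords} and the $N_0$-dependent bound inside the proof of Lemma~\ref{Lem:MatrixBound} keep every entry at $O(L)$ bits throughout the reduction, and then charge each of the $\le n$ reduction phases at $O(L^3)$. The only (immaterial) difference is that you retain the piecewise row-by-row scheme, whereas the paper remarks that one may reduce the full $n$-row matrix at once since logspace is no longer a constraint.
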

\begin{proof}
For the compressed-word version, let $\mathds{A}_{1},\ldots,\mathds{A}_{n}$ 
be the input straight-line programs encoding $h_{1},\ldots,h_{n}$. 
We first compute, by Theorem \ref{Thm:CompressedCoords}, 
the coordinate vectors $\coords{\eval{\mathds{A}_{i}}}$ for $i=1,\ldots,n$.  
This operation occurs in time $O(nL^{3})$.
Since $|\eval{\mathds{A}_{i}}|\leq 2^{L}$, we obtain from Theorem \ref{Lem:PolyCoords} 
that each entry of $B_{0}$ is bounded by $\kappa\cdot(2^{L})^{c}$ and hence 
is encoded using $O(L)$, rather than $O(\log L)$, bits. 
Therefore the reduction process described in Theorem \ref{Lem:Compute_std_form} 
runs in time $O(nL^{3})$.  Note that one may instead include all $n$ 
rows in the initial matrix, as in {\sc Steps} 1-5, instead of the piecewise 
approach of Theorem \ref{Lem:Compute_std_form}, since logspace is 
not an issue here.

Since $|\eval{\mathds{A}_{i}}|\leq 2^{L}$, it follows from Lemma \ref{Lem:MatrixBound} that each coordinate in the full-form sequence for $H$ 
is bounded in magnitude by a polynomial function of $2^{L}$.  Then each element of the sequence may be expressed as a 
compressed word of size polynomial in $L$.  

The binary Mal'cev coordinate version simply omits the initial computation of 
coordinates.
\end{proof}

\subsection{Membership problem} \label{se:membership_problem}
We  can  now  apply  the  matrix  reduction  algorithm  to  solve  the  membership problem  in logspace,  and  its  compressed-word  version  in  quartic  time.   We also solve the membership search problem using only logarithmic space.



\begin{theorem}\label{th:logspace_membership}
Let $G$ be a finitely generated nilpotent group. There is an algorithm that, given elements $h_{1},\ldots,h_{n}\in G$ and $h\in G,$ decides whether or not $h$ is an element of the subgroup $H=\langle h_{1},\ldots, h_{n}\rangle$.  The algorithm runs in space logarithmic in $L=|h|+\sum_{i=1}^{n}|h_{i}|$ and time $O(L\log^{3} L)$.  Moreover, the algorithm additionally returns the following: 
\begin{itemize}
    \item the binary coordinates tuples of $g_1,\ldots,g_s\in G$ s.t. $(g_{1},\ldots,g_{s})$ is a full-form sequence for the subgroup $H$ (relative to a lower central Mal'cev basis), and
    \item if $h\in H$, then also the unique binary tuple $(\gamma_1,\ldots,\gamma_s)$ s.t. $h=g_{1}^{\gamma_{1}}\cdots g_{s}^{\gamma_{s}}$.
\end{itemize}
Furthermore, the word length of $g_{1}^{\gamma_{1}}\cdots g_{s}^{\gamma_{s}}$ is bounded by a degree $2m(8 c^{3})^{m}$ polynomial function of $L$.
\end{theorem}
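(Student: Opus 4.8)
The plan is to leverage the matrix‑reduction machinery already built: first compute a canonical generating sequence for $H$, then test membership by a light ``triangular back‑substitution'' against it. \textbf{Setup.} Compute $\coords{h}$ using Theorem~\ref{th:compute_malcev}, in space $O(\log L)$ and time $O(L\log^{2}L)$; by Theorem~\ref{Lem:PolyCoords} each coordinate of $h$ is an $O(\log L)$-bit integer. In parallel, apply Theorem~\ref{Lem:Compute_std_form} to $h_{1},\ldots,h_{n}$ to obtain the unique full‑form sequence $(g_{1},\ldots,g_{s})$ generating $H$, with $s\le m$, pivot columns $\pivot{1}<\cdots<\pivot{s}$ and pivot values $d_{i}=\coordsj{g_{i}}{\pivot{i}}>0$; this runs in space $O(\log L)$ and time $O(L\log^{3}L)$, and each $|\coordsj{g_{i}}{t}|\le C L^{K}$ with $K=m(8c^{2})^{m}$, so the whole sequence fits in logarithmic space.

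\textbf{Reduction and decision.} Maintain a coordinate vector for an element $u$ (initially $u=h$) and integers $\gamma_{1},\ldots,\gamma_{s}$ (initially $0$). For $i=1,\ldots,s$, set $j=\pivot{i}$ and $\eta=\coordsj{u}{j}$. If $j\notin\mathcal{T}$ and $d_{i}\nmid\eta$, output ``$h\notin H$'' and halt; otherwise put $\gamma_{i}=\eta/d_{i}$. If $j\in\mathcal{T}$: by property (v) of full form $d_{i}\mid e_{\pivot{i}}$, so $\gamma_{i}d_{i}\equiv\eta\pmod{e_{\pivot{i}}}$ is solvable iff $d_{i}\mid\eta$; if it is not, output ``$h\notin H$'' and halt, otherwise let $\gamma_{i}$ be the unique solution with $0\le\gamma_{i}<e_{\pivot{i}}/d_{i}$. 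Then replace $u$ by $g_{i}^{-\gamma_{i}}u$, computing its coordinates via Lemma~\ref{le:malcev}(ii) followed by Lemma~\ref{le:malcev}(i) (equivalently Lemma~\ref{Lem:RecomputeMalcev}). Since $g_{i}$, hence $g_{i}^{-\gamma_{i}}$, has zero coordinates in positions $1,\ldots,\pivot{i}-1$, Lemma~\ref{le:malcev}(\ref{le:ab_operation}) shows this multiplication leaves coordinates $1,\ldots,\pivot{i}-1$ of $u$ untouched and zeroes out coordinate $\pivot{i}$. After the loop, output ``$h\in H$'' with $h=g_{1}^{\gamma_{1}}\cdots g_{s}^{\gamma_{s}}$ if all coordinates of $u$ are zero, and ``$h\notin H$'' otherwise. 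Correctness: if $h\in H$, write $h=g_{1}^{\beta_{1}}\cdots g_{s}^{\beta_{s}}$ as in Lemma~\ref{Lem:UniqueStandardForm}; an induction on $i$ gives the invariant $u=g_{i}^{\beta_{i}}\cdots g_{s}^{\beta_{s}}$ at the start of step $i$, since $g_{i+1},\ldots,g_{s}$ have zero coordinates in positions $\le\pivot{i}$ so Lemma~\ref{le:malcev}(\ref{le:ab_operation}) forces $\coordsj{u}{\pivot{i}}=\coordsj{g_{i}^{\beta_{i}}}{\pivot{i}}$, whence the algorithm recovers $\gamma_{i}=\beta_{i}$; after step $s$, $u=1$. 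Conversely, if the loop finishes with $u=1$ then $h=g_{1}^{\gamma_{1}}\cdots g_{s}^{\gamma_{s}}\in H$, and every ``$h\notin H$'' output is correct because otherwise the invariant argument would force $u=1$.

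\textbf{Complexity and output length.} The setup phase dominates, giving time $O(L\log^{3}L)$ and space $O(\log L)$. The loop has $s\le m$ iterations; in each, $|\eta|$ and $|\gamma_{i}|$ are bounded by the coordinates of the current $u$. Writing $\ell_{i}$ for the word length of $u$ at the start of step $i$, Theorem~\ref{Lem:PolyCoords} gives $|\coordsj{u}{\cdot}|\le\kappa\ell_{i}^{c}$ and $|g_{i}|\le mCL^{K}$, so $\ell_{i+1}\le\ell_{i}+|\gamma_{i}|\,|g_{i}|\le\kappa'\ell_{i}^{c}L^{K}$ with $\ell_{1}\le L$. Unwinding this recursion over $\le m$ steps yields $\ell_{s+1}\le C'L^{a}$ with $a\le c^{m-1}(1+K)+K\le c^{m}+c^{m}K$, and $c^{m}K=c^{m}\cdot m(8c^{2})^{m}=m(8c^{3})^{m}$, so $a\le 2m(8c^{3})^{m}$. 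In particular every intermediate coordinate is a polynomial‑in‑$L$, hence $O(\log L)$-bit, integer, so the loop stays in space $O(\log L)$ and uses only $\mathrm{poly}(\log L)$ time; and the word length of the returned expression $g_{1}^{\gamma_{1}}\cdots g_{s}^{\gamma_{s}}$ is at most $\ell_{s+1}$, bounded by a polynomial of degree $2m(8c^{3})^{m}$ in $L$. (The $\gamma_{i}$ and the coordinates of the $g_{i}$ are output in binary; printing the corresponding words in the generators only adds polynomial time, cf.\ Remark~\ref{re:binary_output}.)

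\textbf{Main obstacle.} The decision itself is essentially trivial given Theorem~\ref{Lem:Compute_std_form}. The real work is quantitative: controlling the magnitude of $\coords{u}$ through the $\le m$ left‑multiplications so as both to keep the whole computation in $O(\log L)$ space and to pin down the exponent in the output bound. The recursion $\ell_{i+1}\le\kappa'\ell_{i}^{c}L^{K}$ is what turns the base $8c^{2}$ of Theorem~\ref{Lem:Compute_std_form} into $8c^{3}$ (one extra factor of $c$ per step of coordinate composition), and some care is needed to verify the precise degree $2m(8c^{3})^{m}$ rather than merely ``polynomial''.
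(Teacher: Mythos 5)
Your proposal is correct and follows essentially the same route as the paper: compute the full-form sequence via Theorem~\ref{Lem:Compute_std_form}, then recover the exponents $\gamma_{i}$ by successive division at the pivot columns and left-multiplication by $g_{i}^{-\gamma_{i}}$, with the same degree recursion $\deg(\ell_{i+1})\le c\deg(\ell_{i})+m(8c^{2})^{m}$ yielding the $2m(8c^{3})^{m}$ bound. The only (harmless) differences are that you defer the check of non-pivot coordinates to the final test $u=1$ rather than testing them at each step, and you treat the torsion pivots slightly more explicitly.
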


\begin{proof}
\emph{Algorithm}. 
Compute the full form $B$ of the coordinate matrix corresponding to 
$H$ and the full-form sequence $(g_1, \ldots, g_s)$. 
As before, denote by $\alpha_{ij}$ the $(i,j)$-entry of $B$ and by 
$\pivot{1}, \ldots, \pivot{s}$ its pivots.
 
By Lemma~\ref{Lem:UniqueStandardForm}, any element of $H$ can be written as $g_1^{\gamma_1} \cdots g_s^{\gamma_s}$. We show how to find these exponents. Denote $h^{(1)} = h$ and $\coords{h^{(j)}}=(\beta_1^{(j)}, \ldots, \beta_m^{(j)})$, with $h^{(j)}$ being defined below. For $j=1, \ldots, s$, do the following. If $\beta_{l}^{(j)} \neq 0$ for 
any  $1\leq l < \pi_j$, then $h\notin H$. Otherwise, 
check whether $\alpha_{j{\pi_j}}$ divides $\beta_{\pi_{j}}^{(j)}$. If not, then $h\notin H$. If yes, let 
\[
\gamma_{j} = \beta_{\pi_{j}}^{(j)}/{\alpha_{j\pi_j}} \quad \text{and} \quad h^{(j+1)} = g_j^{-\gamma_j}h^{(j)}. 
\]
If $j<s$, continue to $j+1$.  If $j=s$, then $h=g_{1}^{\gamma_{1}}\cdots g_{s}^{\gamma_{s}}\in H$ if $h^{(s+1)}=1$ and 
$h\notin H$ otherwise.



\emph{Complexity and length bound.} 
We first prove, by induction on $s$, that the length of the output $g_1^{\gamma_1}\cdots g_s^{\gamma_s}$ is bounded by a degree 
\[
\delta(s) = c^{s} + m(8c^{2})^{m}\left(\sum_{i=0}^{s-1} c^{i}\right)
\]
polynomial 
function of $L$.
First observe that each $g_{i}$ has 
length bounded by a degree $m(8c^{2})^{m}$ polynomial function of $L$ by Lemma \ref{Lem:MatrixBound}.   For $s=1$ we have by Theorem~\ref{Lem:PolyCoords}, 
\[
|\gamma_{1}| \leq |\beta_{\pi_{1}}^{(1)}| \leq \kappa  |h|^{c} \leq \kappa L^{c} 
\]
hence $h=g_{1}^{\gamma_{1}}$ has length bounded by a degree $c+m(8c^{2})^{m}$ polynomial function of $L$.

Now assume that $g_{1}^{\gamma_{1}}\cdots g_{s-1}^{\gamma_{s-1}}$ has length bounded by a degree $\delta(s-1)$ polynomial function of $L$. 
Then $|h^{(s)}| = |g_{s-1}^{-\gamma_{s-1}}\cdots g_{1}^{-\gamma_{1}}|+ |h|$ is also bounded by a degree $\delta(s-1)$ polynomial function, so by 
Theorem \ref{Lem:PolyCoords} $|\beta_{\pivot{s}}^{(s)}|$, and therefore 
$|\gamma_{s}|$, is bounded by a polynomial function of degree $c\cdot \delta(s-1)$.  It follows that $|g_{s}^{\gamma_{s}}|$ is bounded by a 
degree 
\[
c\cdot\delta(s-1)+m(8c^{2})^{m} = \delta(s)
\]
polynomial function of $L$, and hence $g_{1}^{\gamma_{1}}\cdot g_{s}^{\gamma_{s}}$ obeys the same degree bound.  The bound stated in the theorem 
may be obtained using $s\leq m$.

Regarding complexity, the initial computation of $(g_{1},\ldots,g_{s})$ is performed within the bounds by Lemma \ref{Lem:Compute_std_form}.  
Since the magnitude of each $\gamma_{j}$ is bounded, for all $j$, by a polynomial function of $L$, the coordinates may be computed and stored using logarithmic space 
and in time $O(\log^{2} L)$ by Lemma \ref{Lem:RecomputeMalcev}. The complexity bound then follows from the fact that the recursion has constant depth $s$.



\end{proof}


%

Though the expression of $h$ in terms of the full-form generators $g_{1},\ldots,g_{s}$ 
of $H$ provides a standardized representation, one may also wish to express $h$ in 
terms of the given generators.


\begin{corollary}\label{cor:poly_membership}
Let $G$ be a finitely generated nilpotent group. There is an algorithm that, given elements $h_{1},\ldots,h_{n}\in G$ and $h\in \langle h_1, \ldots, h_n \rangle$, computes an expression $h=h_{i_1}^{\epsilon_{1}}\cdots h_{i_t}^{\epsilon_{t}}$ where $i_j \in \{1, \ldots, n\}$ and $\epsilon_j = \pm 1$. The algorithm runs in time polynomial in $L= |h|+\sum_{i=1}^{n} |h_i|$ and the output has length bounded by a polynomial function of $L$.  
\end{corollary}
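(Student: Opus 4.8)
The plan is to augment the matrix reduction algorithm of \S\ref{se:matrix_reduction} with bookkeeping that records, for each row of the working matrix, a word over $\{h_1^{\pm 1},\ldots,h_n^{\pm 1}\}$ representing the corresponding group element, and then to compose this with the output of Theorem~\ref{th:logspace_membership}. Since we are no longer constrained to logarithmic space, we run the reduction directly on the full matrix of all $n$ rows at once (rather than the piecewise scheme of Lemma~\ref{Lem:Compute_std_form}), so that it consists of only $s\le m$ phases of {\sc Steps 1--5}. Alongside the matrix we maintain words $w_1,w_2,\ldots$, one per current row, with the invariant that $w_i=_G(\text{element of row }i)$; initially $w_i=h_i$. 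Each row operation has an evident effect: swapping rows $i,j$ swaps $w_i,w_j$; replacing $h_i$ by $h_ih_j^{\ell}$ replaces $w_i$ by $w_iw_j^{\ell}$ (the literal concatenation of $|\ell|$ copies of $w_j^{\pm1}$); replacing $h_i$ by $h_i^{-1}$ replaces $w_i$ by the formal inverse word; appending a product $h_{i_1}\cdots h_{i_k}$ appends $w_{i_1}\cdots w_{i_k}$; and the trivial elements inserted by operation~(\ref{row:trivial}) in {\sc Step 3}, being equal to $1$ in $G$, get the empty word. In particular the conjugate rows $h_j^{h_k^{\ell}}$ appended in {\sc Step 4} get the word $w_k^{-\ell}w_jw_k^{\ell}$, and the row $h_k^{e_k/\alpha_{k\pivot{k}}}$ gets $w_k^{e_k/\alpha_{k\pivot{k}}}$.

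The step requiring care is a polynomial bound on $\max_i|w_i|$. Arguing as in the proof of Lemma~\ref{Lem:MatrixBound}, throughout the algorithm the number of rows stays bounded by $(10m)^s\cdot n$ and the magnitude $N$ of the largest matrix entry stays bounded by a polynomial function of $L$; both are polynomial in $L$ because $s\le m$ is a constant. Within one phase each $w_i$ is modified only a bounded number of times, and each modification either concatenates a bounded number of the current words or raises a current word to a power whose magnitude is at most $N$ or the constant $E=\max_{i\in\mathcal{T}}e_i$. Hence one phase multiplies $\max_i|w_i|$ by at most a polynomial-in-$L$ factor (coming mainly from {\sc Steps 1--2}, where a word of length $\le N\cdot(\#\text{rows})\cdot\max_i|w_i|$ is built and then appended with exponent up to $N$), and with $s=O(1)$ phases the words $w_{g_1},\ldots,w_{g_s}$ attached to the full-form generators have length bounded by a fixed-degree polynomial in $L$. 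All the bookkeeping is plainly polynomial-time, so the augmented reduction runs in polynomial time.

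It remains to invoke Theorem~\ref{th:logspace_membership}: since $h\in H$ it produces $\gamma_1,\ldots,\gamma_s\in\integers$ with $h=g_1^{\gamma_1}\cdots g_s^{\gamma_s}$ and each $|\gamma_j|$ polynomially bounded in $L$ (the theorem bounds the whole length of $g_1^{\gamma_1}\cdots g_s^{\gamma_s}$). Substituting the recorded words gives
\[
h=_G w_{g_1}^{\gamma_1}\cdots w_{g_s}^{\gamma_s},
\]
a word over $\{h_1^{\pm1},\ldots,h_n^{\pm1}\}$ of length $\sum_{j=1}^{s}|\gamma_j|\,|w_{g_j}|$, which is polynomial in $L$; listing its letters as $h_{i_1}^{\epsilon_1}\cdots h_{i_t}^{\epsilon_t}$ with each $\epsilon_j=\pm1$ yields the desired expression. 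The main obstacle is exactly the length control in the middle paragraph: it succeeds only because the number of reduction phases is the constant $s\le m$, so the per-phase polynomial blow-up compounds a bounded number of times; if one preferred genuinely short output, the words could instead be carried as straight-line programs, but for the stated polynomial bound the naive representation is enough.
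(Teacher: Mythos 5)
Your proposal is correct and follows essentially the same route as the paper's proof: augment the ($s\le m$ phase) matrix reduction with per-row words over $\{h_1^{\pm1},\ldots,h_n^{\pm1}\}$, show each phase inflates the maximum word length by at most a polynomial-in-$L$ factor (using the polynomial bound on the exponents and the bounded number of row modifications per phase), and then substitute these words into the expression $h=g_1^{\gamma_1}\cdots g_s^{\gamma_s}$ supplied by Theorem~\ref{th:logspace_membership}. The paper phrases the length control as a recursion $L_k\le f(L)L_{k-1}$ over the $s$ phases, which is the same bound you obtain, and it likewise notes that this bookkeeping is why the algorithm is polynomial-time but not logspace.
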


\begin{proof}
We modify the matrix reduction algorithm from Section~\ref{se:matrix_reduction} so as to be able to express each $g_i$ as a product of $h_1, \ldots, h_n$. To this end, along with the matrix $A_k$, we store at every step an array $C_k$ which contains the elements corresponding to the rows of the matrix $A_k$, each written as a product of $h_1, \ldots, h_n$. Thus, $C_{s}$ will be an array containing in each entry, $i$, a product of $h_1, \ldots, h_n$ which is equal to $g_i$.

To obtain the array $C_k$ from the information in step $k-1$, we perform on $C_{k-1}$ the corresponding row operation that was performed on $A_{k-1}$, except we record the result not in terms of Mal'cev coordinates, but directly in terms of the words in the array $C_{k-1}$. 

For $1\leq k \leq s$, denote by $L_k$ the length (as a word over the alphabet $\{h_{1}^{\pm 1},\ldots,h_{n}^{\pm 1}\}$) of the largest entry of $C_k$.
Note that at each step, every row has at most two operations performed on it that may increase its length (one application of (\ref{row:substraction}) in {\sc Step 2} and one in {\sc Step 3}), or is newly-created. 
Using the fact that all exponents involved are bounded in magnitude by 
$C\cdot L^{K}$, where $C$ and $K$ are the constants from Lemma \ref{Lem:MatrixBound}, 
it easily follows that there is a poylnomial function $f(L)$ such that 
\[
L_{k}\leq f(L) L_{k-1}.
\]

Since $L_{0}=1$ and $k\leq s$, it follows that $L_{s}$ is bounded by a polynomial function 
of $L$ and therefore the computation of $C_{s}$ is perfomed in polynomial time.


Finally, we can simply substitute the expression of $g_i$ (for $1\leq i \leq s$) in terms of $h_1, \ldots, h_n$ into the expression $h = g_1^{\gamma_1} \cdots g_s^{\gamma_s}$ to obtain an expression $h= h_{i_1}^{\pm 1}\cdots h_{i_t}^{\pm 1}$ with $i_{1}, \ldots, i_t \in \{1, \ldots, n\}$. Since the length (over the alphabet 
$\{g_{1}^{\pm},\ldots,g_{s}^{\pm}\}$) 
of the expression $g_1^{\gamma_1} \cdots g_s^{\gamma_s}$ 
is bounded by a polynomial function of $L$, producing the 
expression $h= h_{i_1}^{\pm 1}\cdots h_{i_t}^{\pm 1}$ occurs in polynomial time.

Observe that this method will not yield a logspace algorithm, 
since the array $C_{s}$ is too large to be stored in memory. 
\end{proof}

The algorithm used in Theorem~\ref{th:logspace_membership} may be combined with the compressed version of matrix reduction (Lemma \ref{Lem:compressed_std_form}) to give a polynomial-time solution 
to the compressed membership problem.

\begin{theorem}\label{Thm:CompressedMembership}
Let $G$ be a finitely generated nilpotent group. There is an algorithm that, given compressed words $\mathds{A}_{1},\ldots,\mathds{A}_{n}, \mathds{B}$ 
(or binary Mal'cev coordinate tuples) over $G$, decides whether or not $\eval{\mathds{B}}$ belongs to the subgroup generated by $\eval{\mathds{A}_{1}},\ldots,\eval{\mathds{A}_{n}}$. The algorithm runs in time $O(n L^{3})$, where $L=|\mathds{B}|+|\mathds{A}_{1}|+\ldots+|\mathds{A}_{n}|$.  
\end{theorem}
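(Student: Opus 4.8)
The plan is to assemble this from the compressed-coordinate machinery already in place, reducing the compressed-word version to the binary Mal'cev coordinate version. First I would treat the binary coordinate case. Given $h=\eval{\mathds{B}}$ and $h_1,\dots,h_n$ as coordinate tuples whose entries are $O(L)$-bit integers, apply the compressed matrix reduction of Lemma~\ref{Lem:compressed_std_form} to $h_1,\dots,h_n$ to obtain the full-form sequence $(g_1,\dots,g_s)$ with $s\le m$, each $g_i$ given by an $O(L)$-bit coordinate tuple; this costs $O(nL^3)$. Then run the exponent-extraction procedure from the proof of Theorem~\ref{th:logspace_membership} verbatim: with $h^{(1)}=h$, for $j=1,\dots,s$ check that the first $\pi_j-1$ coordinates of $h^{(j)}$ vanish and that $\alpha_{j\pi_j}$ divides $\beta_{\pi_j}^{(j)}$, declare $h\notin H$ if either fails, otherwise set $\gamma_j=\beta_{\pi_j}^{(j)}/\alpha_{j\pi_j}$ and $h^{(j+1)}=g_j^{-\gamma_j}h^{(j)}$, and finally report $h=g_1^{\gamma_1}\cdots g_s^{\gamma_s}\in H$ iff $h^{(s+1)}=1$.

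For the complexity bookkeeping, by Lemma~\ref{Lem:MatrixBound} applied with $|\eval{\mathds{A}_i}|\le 2^L$, every coordinate of every $g_i$ is bounded in magnitude by a polynomial in $2^L$, hence is an $O(L)$-bit number; each intermediate $h^{(j)}$ is, by Lemma~\ref{Lem:RecomputeMalcev}, the coordinate tuple of a product of a bounded number of known elements raised to $O(L)$-bit powers, so it again has $O(L)$-bit coordinates and is computed in time $O(L^2)$. Since $s\le m$ is a constant depending only on $G$, the extraction loop performs $O(1)$ coordinate computations and $O(1)$ divisions and comparisons on $O(L)$-bit integers, running in time $O(L^2)$, which is dominated by the $O(nL^3)$ cost of the matrix reduction. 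For the compressed-word version, I would first apply Theorem~\ref{Thm:CompressedCoords} to each of $\mathds{A}_1,\dots,\mathds{A}_n$ and to $\mathds{B}$ to produce their coordinate tuples with $O(L)$-bit entries, at cost $O(L^3)$ per program and hence $O(nL^3)$ total, then invoke the binary coordinate algorithm above. Summing the two $O(nL^3)$ contributions yields the stated bound.

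I do not expect a genuine obstacle here: the statement is an assembly of Theorem~\ref{Thm:CompressedCoords}, Lemma~\ref{Lem:compressed_std_form}, Lemma~\ref{Lem:MatrixBound}, and the argument of Theorem~\ref{th:logspace_membership}. The only point requiring care is checking that the passage from $O(\log L)$-bit coordinates (as in Theorem~\ref{th:logspace_membership}) to $O(L)$-bit coordinates does not inflate the \emph{number} of operations, only their individual cost. This holds because the quantities controlling the operation count---the number $s$ of full-form generators, the recursion depth, and the number of row operations per reduction phase---are all bounded by a constant depending only on $G$; thus the larger integers merely replace the per-operation cost $O(\log^2 L)$ by $O(L^2)$, which is absorbed into the $O(nL^3)$ term.
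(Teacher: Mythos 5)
Your proposal is correct and follows exactly the route the paper intends: the paper gives no separate proof of Theorem~\ref{Thm:CompressedMembership} beyond the remark that one combines the algorithm of Theorem~\ref{th:logspace_membership} with the compressed matrix reduction of Lemma~\ref{Lem:compressed_std_form}, which is precisely your assembly (coordinate extraction via Theorem~\ref{Thm:CompressedCoords}, reduction, then the constant-depth exponent-extraction loop with $O(L)$-bit arithmetic). Your closing observation --- that only the per-operation cost inflates from $O(\log^2 L)$ to $O(L^2)$ while the operation count stays constant --- is the right point to verify and is handled correctly.
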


As in Theorem \ref{th:logspace_membership}, the algorithm may also compute the unique expression of $\eval{\mathds{B}}$ in terms of the 
standard-form sequence for $\langle\eval{\mathds{A}_{1}},\ldots,\eval{\mathds{A}_{n}}\rangle$.

\subsection{Subgroup presentations}
\label{se:presentation}
We now apply matrix reduction to show that finitely generated nilpotent groups 
are logspace effectively coherent: given a finitely generated subgroup $H$, 
we can in logspace and quasilinear time compute a consistent nilpotent 
presentation for $H$. By the \emph{size} of a presentation we mean the 
number of generators plus the sum of the lengths of the relators.

\begin{theorem}\label{Thm:EffectiveCoherence}
Let $G$ be a finitely generated nilpotent group.  There is an algorithm that, given $h_{1},\ldots,h_{n}\in G$, computes  
a consistent nilpotent presentation for the subgroup $H=\langle h_{1},\ldots, h_{n}\rangle$.  
The algorithm runs in space 
logarithmic in $L=\sum_{i=1}^{n} |g_{i}|$ and time $O(L\log^{3} L)$, the 
size of the presentation is bounded by a degree $2m(8c^{3})^{m}$ polynomial function of $L$, and binary numbers are used to encode exponents appearing in relators of the 
presentation.
\end{theorem}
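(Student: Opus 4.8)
The plan is to reduce everything to matrix reduction and the membership machinery. First, apply Theorem~\ref{Lem:Compute_std_form} to compute, in space $O(\log L)$ and time $O(L\log^{3}L)$, the unique full-form sequence $U=(g_{1},\ldots,g_{s})$ generating $H$; by that theorem $s\le m$, each $g_{i}$ has length bounded by a degree $m(8c^{2})^{m}$ polynomial in $L$, and each coordinate is output in binary. Write $\alpha_{ij}=\coordsj{g_{i}}{j}$ and let $\pivot{1}<\cdots<\pivot{s}$ be the pivots. This sequence is a Mal'cev basis for $H$ associated with the central series $H_{k}=\langle g_{k},\ldots,g_{s}\rangle=H\cap\langle a_{\pivot{k}},\ldots,a_{m}\rangle$, where the second equality is the fullness property~(\ref{li:std_full}); each $H_{k}$ is normal in $H$ and $[H,H_{k}]\le H_{k+1}$, because the $\langle a_{i},\ldots,a_{m}\rangle$ form a central series of $G$ and the pivots are strictly increasing (so $H\cap\langle a_{\pivot{k}+1},\ldots,a_{m}\rangle=H_{k+1}$). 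The output presentation will be precisely the presentation~(\ref{stdpolycyclic1})--(\ref{stdpolycyclic3}) on generators $g_{1},\ldots,g_{s}$ attached to this Mal'cev basis.

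Next I would compute the relators. Put $f_{i}=e_{\pivot{i}}/\alpha_{i\pivot{i}}$ when $\pivot{i}\in\mathcal{T}$ (an integer by standard-form property~(\ref{li:std_torsion}), and $\ge2$, since $0\le\alpha_{i\pivot{i}}<e_{\pivot{i}}$). Using Lemma~\ref{le:malcev}(\ref{le:ab_operation}) repeatedly one checks that $g_{i}^{f_{i}}$, $g_{j}^{g_{i}}$, and $g_{j}^{g_{i}^{-1}}$ all have their first $\pivot{i}$ coordinates equal to $0$, hence by fullness lie in $H_{i+1}=\langle g_{i+1},\ldots,g_{s}\rangle$; and since $A$ is a lower-central Mal'cev basis, the commutator property~(\ref{Eqn:CommutatorGamma}) forces the correction terms $g_{j}^{-1}g_{j}^{g_{i}}$ and $g_{j}^{-1}g_{j}^{g_{i}^{-1}}$ to lie in $H_{j+1}$. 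For each of these $O(m^{2})$ elements I would compute its Mal'cev coordinates in $G$ by Lemma~\ref{Lem:RecomputeMalcev} (a product of a bounded number of the $g_{\ell}$ raised to poly-bounded exponents) and then express it as a normal-form word in the relevant tail $(g_{i+1},\ldots,g_{s})$ or $(g_{j+1},\ldots,g_{s})$ via the coordinate-by-coordinate reduction in the proof of Theorem~\ref{th:logspace_membership} --- which is legitimate because every tail of $U$ is again in full form (properties~(\ref{li:std_echelon})--(\ref{li:std_full}) are inherited). This yields the power relators $g_{i}^{f_{i}}=g_{i+1}^{\mu_{i,i+1}}\cdots g_{s}^{\mu_{is}}$ and the commutation relators $g_{j}g_{i}=g_{i}g_{j}\cdot(\text{a word in }g_{j+1},\ldots,g_{s})$ and $g_{j}^{-1}g_{i}=g_{i}g_{j}^{-1}\cdot(\text{a word in }g_{j+1},\ldots,g_{s})$, all exponents in binary. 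As in Theorem~\ref{th:logspace_membership}, each relator has length bounded by a degree $2m(8c^{3})^{m}$ polynomial in $L$, and there are only $O(m^{2})=O(1)$ relators, so the presentation's size meets the stated bound; since computing $U$ dominates, the whole procedure runs in space $O(\log L)$ and time $O(L\log^{3}L)$.

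Finally I would verify the output is a \emph{consistent} nilpotent presentation of $H$. By construction every relator holds in $H$ and has the shape~(\ref{stdpolycyclic1})--(\ref{stdpolycyclic3}) relative to the ordered generators $g_{1},\ldots,g_{s}$ and the cyclic central series $H_{1}\rhd\cdots\rhd H_{s+1}=1$, so it is a nilpotent presentation; consistency reduces to checking that the order of $g_{i}$ modulo $\langle g_{i+1},\ldots,g_{s}\rangle$ is exactly $f_{i}$ (or $\infty$ when $\pivot{i}\notin\mathcal{T}$), and this is immediate from Lemma~\ref{Lem:UniqueStandardForm}, which exhibits $H$ as the set of products $g_{1}^{\beta_{1}}\cdots g_{s}^{\beta_{s}}$ with each $\beta_{i}$ ranging over precisely the corresponding residues. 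Hence, as in the discussion preceding Proposition~\ref{pr:find_malcev}, the presentation is consistent and therefore presents $H$.

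I expect the main difficulty to be bookkeeping rather than a conceptual step: one must confirm that the tails $(g_{k},\ldots,g_{s})$ are genuinely in full form (so the membership-style reduction returns valid normal forms), that the correction words in the commutation relators actually land in the required term $H_{j+1}$ --- which uses the $[\Gamma_{p},\Gamma_{q}]\le\Gamma_{p+q}$ property of the lower-central Mal'cev basis, not merely fullness --- and that the exponent blow-up from the $g_{i}$ to the relator exponents stays within the degree $2m(8c^{3})^{m}$ bound. Each of these is already controlled by Lemma~\ref{Lem:UniqueStandardForm}, Lemma~\ref{le:malcev}, Lemma~\ref{Lem:MatrixBound}, and Theorem~\ref{th:logspace_membership}.
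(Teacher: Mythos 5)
Your proposal is correct and follows essentially the same route as the paper: compute the full-form sequence via Theorem~\ref{Lem:Compute_std_form}, observe that the tails $H_{k}=\langle g_{k},\ldots,g_{s}\rangle$ form a cyclic central series by fullness and centrality of the series $\langle a_{i},\ldots,a_{m}\rangle$, and obtain the relators (\ref{stdpolycyclic1})--(\ref{stdpolycyclic3}) by running the membership algorithm of Theorem~\ref{th:logspace_membership} on $g_{i}^{e_{\pivot{i}}/\alpha_{i\pivot{i}}}$ and the conjugates $g_{j}^{g_{i}^{\pm1}}$ against the tail subgroups, with the same complexity and size bounds. The extra bookkeeping you flag (tails remaining in full form, consistency via Lemma~\ref{Lem:UniqueStandardForm}) is handled the same way in the paper, which leaves it largely implicit.
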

\begin{proof}
\emph{Algorithm.} Begin by computing the full sequence $(g_{1},\ldots,g_{s})$ 
for $H$ using Lemma \ref{Lem:Compute_std_form}. Let 
$H_{i}=\langle g_{i},g_{i+1},\ldots,g_{s}\rangle$. We claim that 
\[
H=H_{1} \geq H_{2} \ldots \geq H_{s+1}=1
\]
is a cyclic central series for $H$.  From property (\ref{li:std_full}) we have 
\[
H_{i} = H\cap \langle a_{\pi_{i}},a_{\pi_{i}+1},\ldots,a_{m}\rangle.
\]
Since 
$\langle a_{\pi_{i}},\ldots,a_{m}\rangle$ is a normal subgroup of $G$, it follows 
that the above is a normal series, and since 
$H_{i}/H_{i+1}=\langle g_{i}H_{i+1}\rangle$ the series is cyclic. For $i<j$, 
$[g_{i}, g_{j}]\in \langle a_{\pi_{j}+1},\ldots,a_{m}\rangle\cap H\leq H_{j+1}$ hence 
the series is central.  We conclude that $(g_{1},\ldots,g_{s})$ is a Mal'cev basis 
for $H$, so it suffices to compute the relators 
(\ref{stdpolycyclic1})-(\ref{stdpolycyclic3}) in order to give a consistent 
nilpotent presentation of $H$. 
The order $e_{i}'$ of $g_{i}$ modulo $H_{i+1}$ is simply 
$e_{i}/\coordsj{g_{i}}{\pivot{i}}$. We establish 
each relation (\ref{stdpolycyclic1}) by invoking Theorem \ref{th:logspace_membership} 
with input $g_{i}^{e_{i}'}$ and $H_{i+1}=\langle g_{i+1},\ldots, g_{s}\rangle$.  
Since $g_{i}^{e_{i}'}\in H_{i+1}$ and $(g_{i+1},\ldots,g_{s})$ is the unique full sequence for $H_{i+1}$, the membership algorithm returns the expression 
on the right side of (\ref{stdpolycyclic1}).  Relations (\ref{stdpolycyclic2}) and 
(\ref{stdpolycyclic3}) are established similarly.

\emph{Complexity and size of presentation.} Computation of the full sequence and 
invoking Theorem \ref{th:logspace_membership} occurs within the stated space 
and time bounds.  Regarding the size of the presentation, 
the number of generators is $s\leq m$, the number of relations is bounded 
by $s+2\genfrac(){0pt}{}{s}{2}$, and each relation has length obeying the degree $2m(8c^{3})^{m}$ 
bound of Theorem \ref{th:logspace_membership}.
\end{proof}
\begin{remark}
By Remark~\ref{Rem:LowerCentral}, the Mal'cev basis for $H$ obtained in Theorem \ref{Thm:EffectiveCoherence} may be used in Theorem \ref{Lem:PolyCoords}.  Indeed, one may group the Mal'cev generators 
$g_{1},\ldots,g_{s}$ into the appropriate terms 
of the lower central series of $G$, i.e. set 
\[
H_{i}'=\langle g_{k_{i}},g_{k_{i}+1},\ldots,g_{s}\rangle
\]
where $k_{i}$ is the least index such that 
$g_{k_{i}}\in \Gamma_{i}\setminus\Gamma_{i-1}$.  Then 
$H=H_{1}'\geq H_{2}'\geq\ldots\geq H_{c+1}'=1$ is a central series 
for $H$ in which $[H_{i}',H_{j}']\leq H_{i+j}'$ and $g_{1},\ldots,g_{s}$ is 
an associated Mal'cev basis.
\end{remark}



The compressed-word version of Theorem \ref{Thm:EffectiveCoherence}, running in polynomial time, follows immediately.  However, the relators 
of the presentation are provided as straight-line programs. This is 
unconventional, but we may convert to an `uncompressed' 
presentation, \emph{of polynomial size}, using the following construction.

Consider any presentation $\mathcal{P}$ of a group $H$ in which each relator $R$ is the 
output of a straight-line program $\mathds{A}_{R}$. We apply the following construction 
to $\mathcal{P}$. 
Add 
the set of non-terminals $\mathcal{A}$ of all $\mathds{A}_{R}$ to the generators of $\mathcal{P}$.  For each production rule $A\rightarrow BC$, add the relation 
$A=BC$ to the relations of $\mathcal{P}$; for each production $A\rightarrow x$ add the relation $A=x$; for $A\rightarrow \epsilon$ add the relation $A=1$. 
Replace the original relator $\eval{\mathds{A}_{R}}$ by the root non-terminal of 
$\mathds{A}_{R}$ and denote the resulting presentation $\mathcal{P}'$.

It is clear that the sequence of Tietze transformations which at each step removes the greatest non-terminal and its production rule converts 
$\mathcal{P}'$ back to $\mathcal{P}$. Thus $\mathcal{P}$ and $\mathcal{P}'$ present 
isomorphic groups. As the outputs $\eval{\mathds{A}_{R}}$ are, in many cases, 
exponentially 
longer than the size of the programs $\mathds{A}_{R}$, this construction 
significantly reduces the size of the presentation.

\begin{theorem}\label{Thm:CompressedEffectiveCoherence}
Let $G$ be a finitely presented nilpotent group.  There is an algorithm that, given a finite set $\mathds{A}_{1},\ldots,\mathds{A}_{n}$ of 
straight-line programs over $G$ (or binary Mal'cev coordinate tuples), 
computes a presentation for the subgroup 
$\langle \eval{\mathds{A}_{1}},\ldots, \eval{\mathds{A}_{n}}\rangle$.  
The algorithm runs in time $O(n L^{3})$, where $L=\sum_{i=1}^{n} |\mathds{A}_{i}|$, and the size of the presentation is bounded by a polynomial function of $L$.
\end{theorem}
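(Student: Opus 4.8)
The plan is to obtain this theorem as a compressed-word analogue of Theorem~\ref{Thm:EffectiveCoherence}, by running the \emph{same} algorithm but feeding it binary Mal'cev coordinate tuples instead of short words, and then post-processing the resulting presentation (whose relators will naturally come out as straight-line programs) into an ordinary presentation of polynomial size via the construction described just before the theorem statement.

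First I would reduce to the binary Mal'cev coordinate version. In the compressed-word case, apply Theorem~\ref{Thm:CompressedCoords} to each $\mathds{A}_i$ to compute $\coords{\eval{\mathds{A}_i}}$ as a tuple of $O(L)$-bit integers, in time $O(L^3)$ per program and hence within $O(nL^3)$ overall; in the binary coordinate case this step is skipped. From this point on every input element is an $O(L)$-bit coordinate tuple. Next, invoke Lemma~\ref{Lem:compressed_std_form} to compute the full-form sequence $(g_1,\ldots,g_s)$ of $H=\langle \eval{\mathds{A}_1},\ldots,\eval{\mathds{A}_n}\rangle$ in time $O(nL^3)$; since $|\eval{\mathds{A}_i}|\le 2^L$, Lemma~\ref{Lem:MatrixBound} (with word-length bound $2^L$) shows that every coordinate of every $g_i$ has magnitude bounded by a polynomial in $2^L$, hence is an $O(L)$-bit number, and $s\le m$. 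Exactly as in the proof of Theorem~\ref{Thm:EffectiveCoherence}, the chain $H=H_1\ge H_2\ge\cdots\ge H_{s+1}=1$ with $H_i=\langle g_i,\ldots,g_s\rangle$ is a cyclic central series and $(g_1,\ldots,g_s)$ is a Mal'cev basis of $H$, so it remains to produce relators of the form (\ref{stdpolycyclic1})--(\ref{stdpolycyclic3}).

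To obtain (\ref{stdpolycyclic1}), set $e_i'=e_{\pivot{i}}/\coordsj{g_i}{\pivot{i}}$ and call Theorem~\ref{Thm:CompressedMembership} on the element $g_i^{e_i'}$, given by its coordinate tuple, together with the subgroup $H_{i+1}=\langle g_{i+1},\ldots,g_s\rangle$; since $(g_{i+1},\ldots,g_s)$ is already in full form, the membership algorithm returns the unique expression $g_i^{e_i'}=g_{i+1}^{\mu_{i,i+1}}\cdots g_s^{\mu_{is}}$ with binary exponents, which is precisely the right-hand side of (\ref{stdpolycyclic1}). Relators (\ref{stdpolycyclic2}) and (\ref{stdpolycyclic3}) are obtained the same way, applying Theorem~\ref{Thm:CompressedMembership} to $g_jg_i$ and to $g_j^{-1}g_i$ with the appropriate $H_k$. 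There are at most $s+2\binom{s}{2}=O(m^2)$ such calls, each on inputs of bit-size $O(L)$ and with $O(m)$ generators, hence each running in time $O(L^3)$; so the whole computation runs in polynomial time, and tracking constants the dominant cost is the initial coordinate computation together with Lemma~\ref{Lem:compressed_std_form}, giving $O(nL^3)$.

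At this stage we have a consistent nilpotent presentation of $H$ on $s\le m$ generators whose relators are products of powers of generators with $O(L)$-bit exponents; writing such a relator as a word would be exponentially long, but each is encoded by a straight-line program of size $O(L)$ (encoding each power $g_j^{N}$ by an $O(\log N)=O(L)$-size program, as in \S\ref{Section:SLP}), and there are $O(m^2)$ of them. To convert to an ordinary presentation of polynomial size, I would then apply the construction preceding the theorem: adjoin the non-terminals of all relator programs as new generators, replace each production $A\to BC$, $A\to x$, $A\to\epsilon$ by the relation $A=BC$, $A=x$, $A=1$, and replace each original relator by the root non-terminal of its program. As already observed, this is a composition of Tietze transformations, so the result $\mathcal P'$ presents $H$, and its size is bounded by the total size of the relator programs plus $O(m^2)$, hence by a polynomial in $L$. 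The only real work is bookkeeping: verifying that after the reduction to coordinates every object produced — the $g_i$, the membership-problem inputs, the relator exponents — has bit-size polynomial in $L$, which follows from Theorem~\ref{Lem:PolyCoords} and Lemma~\ref{Lem:MatrixBound} with the naive length bound $2^L$; given this, the time bound follows by composing the polynomial-time complexities already established, and the polynomial size of $\mathcal P'$ follows from the polynomial size of the intermediate straight-line programs. I do not anticipate any genuinely hard step, as the theorem is essentially a corollary of Theorem~\ref{Thm:EffectiveCoherence} and the compression machinery of Theorems~\ref{Thm:CompressedCoords}, \ref{Thm:CompressedMembership}, and Lemma~\ref{Lem:compressed_std_form}.
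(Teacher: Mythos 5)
Your proposal is correct and follows essentially the same route as the paper: the paper obtains this theorem as the immediate compressed analogue of Theorem~\ref{Thm:EffectiveCoherence} (via Theorem~\ref{Thm:CompressedCoords}, Lemma~\ref{Lem:compressed_std_form}, and Theorem~\ref{Thm:CompressedMembership}) and then applies the exact Tietze-transformation construction described before the theorem statement to turn the straight-line-program relators into an ordinary presentation of polynomial size. Your write-up simply makes explicit the bookkeeping that the paper leaves as ``follows immediately.''
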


We can also use this construction to remove the binary numbers from the 
presentation found in Theorem \ref{Thm:EffectiveCoherence} and at the same time 
produce a smaller presentation.  However, the presentation will no longer fit 
the definition of a nilpotent presentation due to the presence of extra relators 
used for compressing the exponents.

\begin{corollary}\label{Cor:NonBinaryCoherence}
The algorithm of Theorem \ref{Thm:EffectiveCoherence} may be modified to produce 
a presentation of $H$ (without binary numbers) of size logarithmic in $L$.  The 
algorithm runs in logarithmic space and time $O(L\log^{3} L)$.
\end{corollary}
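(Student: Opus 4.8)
The plan is to invoke Theorem~\ref{Thm:EffectiveCoherence} to produce the consistent nilpotent presentation $\mathcal{P}$ of $H$, and then to eliminate its binary exponents by means of the straight-line-program construction introduced just before Theorem~\ref{Thm:CompressedEffectiveCoherence}. First I would record that every exponent occurring in a relator of $\mathcal{P}$ has magnitude at most a fixed polynomial in $L$: the exponents $e_{i}'=e_{i}/\coordsj{g_{i}}{\pivot{i}}$ are bounded by constants depending only on $G$, while the remaining exponents are output by the membership algorithm of Theorem~\ref{th:logspace_membership} and so are bounded by a polynomial of degree $2m(8c^{3})^{m}$. Hence each exponent is an $O(\log L)$-bit integer; since $\mathcal{P}$ has $s\le m$ generators and at most $s+2{s\choose 2}=O(m^{2})$ relators, each a product of $O(m)$ powers of generators, the presentation $\mathcal{P}$ occupies $O(\log L)$ bits and can be held on the work tape.

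Next I would encode each relator $R=g_{i_{1}}^{k_{1}}\cdots g_{i_{r}}^{k_{r}}$ of $\mathcal{P}$ (with $r=O(m)$ and each $|k_{j}|$ polynomial in $L$) as a straight-line program $\mathds{A}_{R}$ with $\eval{\mathds{A}_{R}}=R$. By the observation in \S\ref{Section:SLP}, each power $g_{i_{j}}^{k_{j}}$ of a single generator is the output of a program of size $O(\log|k_{j}|)=O(\log L)$, obtained by binary subdivision from the binary digits of $k_{j}$; concatenating these with $O(m)$ further non-terminals yields $\mathds{A}_{R}$ with $|\mathds{A}_{R}|=O(m\log L)=O(\log L)$. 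Reading the production rules of $\mathds{A}_{R}$ off the binary exponents is a routine logspace computation, and only a name-counter of $O(\log\log L)$ bits is needed beyond $\mathcal{P}$ itself, so the programs need not all be stored simultaneously. I would then apply the construction preceding Theorem~\ref{Thm:CompressedEffectiveCoherence} to $\mathcal{P}$ with the programs $\mathds{A}_{R}$: adjoin all non-terminals of all $\mathds{A}_{R}$ to the generators, add the relation $A=BC$ (respectively $A=x$, $A=1$) for each production rule, and replace each relator $R$ by the root non-terminal of $\mathds{A}_{R}$. The Tietze argument given there shows the resulting presentation $\mathcal{P}'$ presents a group isomorphic to $H$. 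Its size is $O(\log L)$: there are $O(m^{2})\cdot O(m\log L)=O(\log L)$ non-terminals in total, each contributing one generator and one constant-length relation; the $O(m)$ original generators are unchanged; and each of the $O(m^{2})$ replaced relators now has length $1$. As noted in the surrounding text, $\mathcal{P}'$ is no longer a nilpotent presentation, which is acceptable.

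For the complexity, Theorem~\ref{Thm:EffectiveCoherence} delivers $\mathcal{P}$ in logarithmic space and time $O(L\log^{3}L)$; the passage to the $\mathds{A}_{R}$ and then to $\mathcal{P}'$ operates on the $O(\log L)$-bit object $\mathcal{P}$ and consists solely of reading binary digits, emitting production-rule relations, and relabeling, which takes $O(\log L)$ work space and time polynomial in $\log L$. Hence the overall algorithm runs in logarithmic space and time $O(L\log^{3}L)$. The only point requiring genuine care is confirming that $\mathcal{P}'$ really has only logarithmically many symbols, and this reduces precisely to the polynomial exponent bound of Theorem~\ref{th:logspace_membership} (equivalently Lemma~\ref{Lem:MatrixBound}) controlling the sizes $|\mathds{A}_{R}|$; the remainder is bookkeeping.
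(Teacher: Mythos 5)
Your proposal is correct and follows essentially the same route as the paper: bound the exponents in the presentation of Theorem~\ref{Thm:EffectiveCoherence} by a polynomial in $L$, replace each power $g^{\beta}$ by a straight-line program of size $O(\log\beta)=O(\log L)$, and absorb the production rules into the presentation via the construction preceding Theorem~\ref{Thm:CompressedEffectiveCoherence}. Your version merely spells out the bookkeeping (the count of non-terminals and the logspace implementation) that the paper leaves implicit.
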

\begin{proof}
The number of generators and relators is bounded by a constant, as observed
above.  Each power $a^{\beta}$ 
appearing in a relator is compressed using a straight-line program of size $O(\log\beta)$. 
Since each exponent $\beta$ is bounded by a polynomial function of $L$, the total size 
of the straight-line programs, and thus of the presentation, is logarithmic in $L$.
\end{proof}

\section{Homomorphisms and the conjugacy problem}
\label{Section:Homomorphisms}

Using matrix reduction, \cite{Sim94} showed how to compute the kernel of a homomorphism and compute a preimage for a given element under a homomorphism. 
We prove in \S\ref{se:kernels_presentations} 
that these algorithms may be run in logarithmic space and 
compressed-word versions in polynomial time.  In \S\ref{Section:CP} we 
apply these algorithms to solve the conjugacy problem.

\subsection{Kernels} 
\label{se:kernels_presentations}

For fixed nilpotent groups $G$ and $H$, we may specify a homomorphism 
$\phi$ from a subgroup $K\leq G$ to $H$ via a generating set $(g_{1},\ldots,g_{n})$ 
of $K$ and a list of elements $h_{1},\ldots,h_{n}$ where $\phi(g_{i})=h_{i}$, 
$i=1,\ldots,n$.  For such a homomorphism, we consider the problem of finding a 
generating set for its kernel, and given $h\in \phi(K)$ finding $g\in G$ 
such that $\phi(g)=h$.  Both problems are solved using matrix reduction 
in the group $H\times G$.


\begin{theorem}\label{Thm:KernelAndPreimage}
Fix finitely generated nilpotent groups $G$ and $H$, with lower-central Mal'cev 
basis $A$ and $B$, and let $c$ be the sum 
of their nilpotency classes and $m=|A|+|B|$. 
There is an algorithm that, given 
\begin{itemize}
\item a subgroup $K=\langle g_{1},\ldots,g_{n}\rangle\leq G$,
\item a list of elements $h_{1},\ldots,h_{n}\in H$, such that the mapping $g_i\mapsto h_i$, $1\le i\le n$, extends to a homomorphism $\phi:K\rightarrow H$,
\item optionally, an element $h\in H$ guaranteed to be in the image of $\phi$, 
\end{itemize}
computes binary coordinates tuples (relative to the Mal'cev basis $A$) for
\begin{enumerate}[(i)]
\item a generating set $X$ for the kernel of $\phi$, and \label{kernel}
\item an element $g\in G$ such that $\phi(g)=h$.\label{preimage}
\end{enumerate}
The algorithm runs in space logarithmic in 
$L=|h|+\sum_{i=1}^{n} (|h_{i}|+|g_{i}|)$ and time $O(L \log^{3} L)$, $X$ consists of at most $m$ elements, and 
there is a degree $m(8c^{2})^{m}$ polynomial function of $L$ that bounds the word length of each element of $X$ and a 
degree $2m (8c^{3})^{m}$ polynomial function of $L$ that bounds the word length of $g$.
\end{theorem}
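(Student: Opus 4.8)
The plan is to use the standard trick of encoding a homomorphism via its graph inside the direct product $H\times G$, and then invoke the matrix reduction machinery (Theorem~\ref{Lem:Compute_std_form}) together with the membership algorithm (Theorem~\ref{th:logspace_membership}). First I would fix a lower central Mal'cev basis for $H\times G$: since $[H\times G,H\times G]=([H,H])\times([G,G])$ and more generally the lower central series of a direct product is the product of the lower central series, a Mal'cev basis is obtained by concatenating $B$ with $A$ (reading $B$-coordinates first), the nilpotency class is $\max$ of the two classes (not the sum, but using the sum $c$ as stated is a safe over-estimate), and $m=|A|+|B|$. Form the subgroup $\Gamma=\langle(h_1,g_1),\ldots,(h_n,g_n)\rangle\leq H\times G$, the graph of $\phi$. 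Compute its full-form generating sequence $(u_1,\ldots,u_s)$ via Theorem~\ref{Lem:Compute_std_form}; this runs in logarithmic space and time $O(L\log^3 L)$, gives $s\leq m$, and bounds the coordinate magnitudes (hence word lengths) of the $u_j$ by a degree $m(8c^2)^m$ polynomial in $L$ by Lemma~\ref{Lem:MatrixBound}.

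Next I would extract the kernel generating set from the full-form sequence. An element $(h',g')\in\Gamma$ lies in $\{1\}\times G$ precisely when its first $|B|$ coordinates vanish; by property~(\ref{li:std_full}) of the full form, $\Gamma\cap(\{1\}\times\langle a_{|B|+1},\ldots\rangle)=\Gamma\cap(\{1\}\times G)$ is generated by exactly those $u_j$ whose pivot $\pi_j$ exceeds $|B|$. Each such $u_j$ has the form $(1,k_j)$ with $k_j\in\ker\phi$ (since $(1,k_j)\in\Gamma$ means $1=\phi(k_j)$), and conversely every element of $\ker\phi$ gives a point $(1,k)\in\Gamma\cap(\{1\}\times G)$; thus $X=\{k_j:\pi_j>|B|\}$ generates $\ker\phi$. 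This is at most $m$ elements, each of length bounded by the degree $m(8c^2)^m$ polynomial, delivering~(\ref{kernel}). For the preimage~(\ref{preimage}), I would apply the membership algorithm of Theorem~\ref{th:logspace_membership} to decide $(h,1_G)\cdot(\text{something})$: more precisely, since $h\in\phi(K)$, the pair $(h,?)$ should be completed to a point of $\Gamma$. The clean way is to run membership of $(h,1_G)$ in the subgroup $\langle u_1,\ldots,u_s,(1,a_{|B|+1}),\ldots,(1,a_m)\rangle$ — or equivalently, express $(h,1_G)$ modulo $\{1\}\times G$ in terms of the $u_j$: writing $(h,1_G)\equiv u_1^{\gamma_1}\cdots u_s^{\gamma_s}$ in $H$-coordinates (ignoring the $G$-part), the element $g'=$ the $G$-part of $u_1^{\gamma_1}\cdots u_s^{\gamma_s}$ satisfies $\phi(g')=h$. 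Concretely, I would invoke Theorem~\ref{th:logspace_membership} with the element $h$ and the subgroup $\langle \bar u_1,\ldots,\bar u_s\rangle\leq H$ where $\bar u_j$ is the $H$-projection of $u_j$; this returns exponents $\gamma_j$, and then $g=\prod_j (\text{$G$-part of }u_j)^{\gamma_j}$ computed by Lemma~\ref{Lem:RecomputeMalcev}.

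The length bounds then follow by bookkeeping: each $u_j$ has length bounded by a degree $m(8c^2)^m$ polynomial in $L$ (Lemma~\ref{Lem:MatrixBound}), the exponents $\gamma_j$ returned by the membership algorithm are bounded by a polynomial whose degree picks up an extra factor of $c$ (from Theorem~\ref{Lem:PolyCoords}, applied as in the length analysis inside the proof of Theorem~\ref{th:logspace_membership}), so $g$ — being a product of at most $m$ powers of the $u_j$-projections — has length bounded by a degree $2m(8c^3)^m$ polynomial in $L$, matching the statement. The space and time analysis is inherited verbatim: computing the full form is $O(\log L)$ space and $O(L\log^3 L)$ time, and the membership call plus the final product computation add only lower-order terms (constant-depth recursion on $O(\log L)$-bit integers, by Lemma~\ref{Lem:RecomputeMalcev}), so the whole algorithm stays within $O(\log L)$ space and $O(L\log^3 L)$ time.

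The main obstacle I anticipate is not any single hard step but rather getting the direct-product Mal'cev basis and its induced coordinate ordering exactly right so that "first $|B|$ coordinates vanish iff the element lies in $\{1\}\times G$" holds, and so that the full-form property~(\ref{li:std_full}) cleanly cuts out the kernel — one must order the basis of $H\times G$ with all of $B$ strictly before all of $A$ and verify that this concatenation is genuinely a lower central Mal'cev basis for $H\times G$ (which it is, because $\Gamma_i(H\times G)=\Gamma_i(H)\times\Gamma_i(G)$, so $[\,\cdot\,,\Gamma_i]\leq\Gamma_{i+1}$ is inherited coordinatewise). The secondary bookkeeping obstacle is tracking how the polynomial degrees compose through the membership call to land exactly on $2m(8c^3)^m$ rather than something slightly larger; this is the same computation already carried out in the proof of Theorem~\ref{th:logspace_membership} and requires only that one re-use the bound $\delta(s)$ there with $s\leq m$.
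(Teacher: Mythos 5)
Your proposal follows essentially the same route as the paper: form the graph subgroup $\langle h_i g_i\rangle\le H\times G$ with the $B$-coordinates ordered before the $A$-coordinates, reduce to full form via Theorem~\ref{Lem:Compute_std_form}, read the kernel off the rows whose pivots lie past $|B|$, and recover a preimage of $h$ by running the membership algorithm against the $H$-projections and taking the corresponding product of $G$-parts. One correction to your justification: the concatenation of $B$ before $A$ is \emph{not} a Mal'cev basis associated with the lower central series of $H\times G$ (that series is $\Gamma_i(H)\times\Gamma_i(G)$, whose associated basis would interleave the two bases level by level, breaking the property that the first $|B|$ coordinates cut out $\{1\}\times G$); the paper instead uses the length-$(c_1+c_2)$ central series $\Delta_i=\Gamma_i(H)G$ for $i<c_2$ followed by $\Delta_{c_2+j}=\Gamma_j(G)$, which still satisfies $[\Delta_i,\Delta_j]\le\Delta_{i+j}$ and hence suffices for Theorem~\ref{Lem:PolyCoords} by Remark~\ref{Rem:LowerCentral} --- this is also why $c$ in the statement is the \emph{sum} of the two nilpotency classes rather than a ``safe over-estimate'' of the max.
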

\begin{proof} 
Let $c_{1}$ be the nilpotency class of $G$ and $c_{2}$ that of $H$ and 
consider the nilpotent group $H\times G$.  From the lower central series 
$H=\Gamma_{0}(H)\geq \Gamma_{1}(H)\geq\ldots\geq\Gamma_{c_{2}}(H)=1$ of $H$ and 
$G=\Gamma_{0}(G)\geq \Gamma_{1}(G)\geq\ldots\geq\Gamma_{c_{1}}(G)=1$ of $G$, form 
for $H\times G$ the central series
\begin{equation}\label{Eqn:HGseries}
H\times G=\Delta_{0}\geq \Delta_{1}\geq \ldots\geq \Delta_{c}=1
\end{equation}
where $\Delta_{i}=\Gamma_{i}(H)G$ if $0\leq i< c_{2}$ and 
$\Delta_{i}=\Gamma_{i-c_{2}}(G)$ if $ c_{2}\leq i\leq c=c_{2}+c_{1}$.  Notice 
that this series has the property that 
$[\Delta_{i},\Delta_{j}]\leq \Delta_{i+j}$, since both lower central series have this 
property and the subgroups $H$ and $G$ commute in $H\times G$. Letting 
$B=(b_{1},\ldots,b_{m_{2}})$ and $A=(a_{1},\ldots,a_{m_{1}})$, we 
see that $(b_{1},\ldots,b_{m_{2}},a_{1},\ldots,a_{m_{1}})$ is a 
Mal'cev basis associated with (\ref{Eqn:HGseries}) for which  
Theorem \ref{Lem:PolyCoords} applies.

Let $Q=\langle h_{i} g_{i}\, |\, 1\leq i\leq n\rangle$ and $P=\{ \phi(k)k \;|\; k\in K\}$. We claim that $Q=P$. First, 
observe that $P$ is a subgroup of $H\times G$. Now, each generator $h_{i}g_{i}$ of $Q$ is in $P$, therefore $Q\leq P$. Conversely, if
$\phi(k)k\in P$, then 
$k=g_{i_{1}}^{\epsilon_{1}}\cdots g_{i_{t}}^{\epsilon_{t}}$ hence 
$\phi(k)k= (h_{i_{1}}g_{i_{1}})^{\epsilon_{1}}\cdots (h_{i_{t}}g_{i_{t}})^{\epsilon_{t}}\in Q$.

Let $W=(v_{1} u_{1}, \ldots, v_{s} u_{s})$ be the sequence in full form 
for the subgroup $Q$, 
where $u_{i}\in G$ and $v_{i}\in H$.   Let  $0\leq r\leq s$ 
be the greatest integer such that $v_{r}\neq 1$ (with $r=0$ if all $v_{i}$ are 1).
Set $X=(u_{r+1},\ldots,u_{n})$ and $Y=(v_{1},\ldots,v_{r})$.  We claim that 
$X$ is the full-form sequence for the kernel of $\phi$ and $Y$ is the full-form sequence for the image.

From the fact that $W$ is in full form, it follows that both $X$ and $Y$ are in full form. 
Since $Q=P$, it follows that $v_{i}=\phi(u_{i})$ for $i=1,\ldots,s$. Hence $X$ is contained in the kernel and $Y$ in the image. Now 
consider an arbitrary element 
$\phi(g)g$ of $Q$.  
There exist integers $\beta_{1},\ldots,\beta_{s}$ such that 
\[
\phi(g) g = (v_{1}u_{1})^{\beta_{1}}\cdots (v_{s}u_{s})^{\beta_{s}}=
	v_{1}^{\beta_{1}}\cdots v_{r}^{\beta_{r}} \cdot u_{1}^{\beta_{1}}\cdots u_{s}^{\beta_{s}}.
\]
If $\phi(g)$ is any element of the image, the above shows that $\phi(g)=v_{1}^{\beta_{1}}\cdots v_{r}^{\beta_{r}}$, hence 
$Y$ generates the image.  If $g$ is any element of the kernel, then $1=\phi(g)=v_{1}^{\beta_{1}}\cdots v_{r}^{\beta_{r}}$. 
Since the integers $\beta_{1},\ldots,\beta_{s}$ are unique (taking $0\leq \beta_{i}<e_{i}$ when $e_{i}\in\mathcal{T}$), 
we have $\beta_{1}=\cdots=\beta_{r}=0$.  Consequently 
$g=u_{1}^{\beta_{1}}\cdots u_{s}^{\beta_{s}}=u_{r+1}^{\beta_{r+1}}\cdots u_{s}^{\beta_{s}}$, hence $X$ generates the kernel.

Then to solve (\ref{kernel}), it suffices to compute $W$ using Lemma \ref{Lem:Compute_std_form} and return $X$ (or $\{1\}$ if $r=s$).
To solve 
(\ref{preimage}), use Theorem \ref{th:logspace_membership} to express $h$ as $h=v_{1}^{\beta_{1}}\cdots v_{r}^{\beta_{r}}$ and 
return $g=u_{1}^{\beta_{1}}\cdots u_{r}^{\beta_{r}}$.  
The length and complexity bounds are as given in Lemma \ref{Lem:Compute_std_form} and 
Theorem \ref{th:logspace_membership}.
\end{proof}

\begin{remark}
Notice that if $G,H$ are as in Theorem~\ref{Thm:KernelAndPreimage}, it is possible to check, given lists of elements $g_1,\ldots,g_n\in G$ and $h_{1},\ldots,h_{n}\in H$, whether the mapping $g_i\mapsto h_i$, $1\le i\le n$, extends to a homomorphism $\phi:\langle g_1,\ldots,g_n\rangle =K\rightarrow H$. Indeed, by Theorem~\ref{Thm:EffectiveCoherence} we can compute a finite presentation for $K$, and then for each relator $r$ of that presentation check if the induced value $\phi(r)$ is trivial in $H$ using the solution to the word problem in $H$, by Theorem~\ref{th:compute_malcev}. Note that both of the mentioned theorems run within the time and space bounds of Theorem~\ref{Thm:KernelAndPreimage}.
Therefore, in Theorem~\ref{Thm:KernelAndPreimage} we can omit the requirement that $g_i\mapsto h_i$, $1\le i\le n$, extends to a homomorphism. Instead, we can check if it is the case within the same time and space bounds, and if not, output \emph{``Not a homomorphism''} and a word in $r=g_{i_1}\cdots g_{i_s}$ representing $1$ in $G$ s.t. $h_{i_1}\cdots h_{i_s}$ does not represent $1$ in $H$ (like in Theorem~\ref{Thm:EffectiveCoherence}, binary numbers are used to encode exponents appearing in $r$).

\end{remark}

We can apply Theorem \ref{Thm:KernelAndPreimage} to solve the search version 
of the word problem in logspace and quasilinear time. 

\begin{theorem} \label{Thm:WordSearch}
Let $G$ be a finitely presented nilpotent group.  There is an algorithm that, given 
a word $w$ in generators of $G$ which represents the identity element, produces 
a word $w'$ written as a product of conjugates of relators of $G$ 
such that $w'$ is freely equivalent to $w$. The algorithm runs in space logarithmic in $L=|w|$ and time $O(L\log^{2} L)$, the length of $w'$ is bounded by a degree $2m(8c^{3})^{m}$ polynomial function of $L$, and binary numbers are used to encode exponents appearing in $w'$.
\end{theorem}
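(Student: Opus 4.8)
The plan is to \emph{decorate} the logspace normal-form computation of Theorem~\ref{th:compute_malcev}: run that algorithm, but additionally record, for every elementary rewrite carried out during collection, the conjugate of a relator that justifies it. Since $w$ represents the identity, its normal form is the empty word, so the recorded conjugates of relators multiply freely to $w$ and constitute the desired $w'$. First I would reduce to the case where the presentation of $G$ is the consistent nilpotent presentation associated with the lower central Mal'cev basis $A=\{a_1,\dots,a_m\}$: translating the given presentation to this one is a precomputation (Proposition~\ref{pr:find_malcev}), and each relator of either presentation is a fixed product of conjugates of relators of the other, so the translation costs only constant pre- and post-processing. Thus I may assume $w=x_1\cdots x_L$ is a word over $A^{\pm}$ and the relators are exactly \eqref{stdpolycyclic1}--\eqref{stdpolycyclic3}. (The precomputed combinatorial data needed below --- how a correction word decomposes, how a block of repeated swaps composes --- can be obtained using the subgroup and kernel machinery of \S\ref{Section:MatrixReduction}--\S\ref{Section:Homomorphisms}, in particular Theorem~\ref{Thm:KernelAndPreimage}, since none of it depends on $w$.)

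Reading $w$ left to right, I maintain the Mal'cev coordinates of $x_1\cdots x_j$; the induction hypothesis is that $x_1\cdots x_j=_F Q_j\cdot\widehat N_j$ where $\widehat N_j$ is the normal-form word of $x_1\cdots x_j$ and $Q_j$ is the concatenation of all conjugates of relators emitted so far. Passing from $j$ to $j+1$ amounts to multiplying $\widehat N_j$ on the right by $x_{j+1}=a_t^{\pm1}$, which at the word level is the standard collection: $a_t^{\pm1}$ is moved leftward past the higher generators using \eqref{stdpolycyclic2}/\eqref{stdpolycyclic3}, the $a_t$-exponent is reduced modulo $e_t$ using \eqref{stdpolycyclic1}, and the resulting correction words --- which lie in $\langle a_{>t}\rangle$ and hence have strictly greater lower central weight --- are collected recursively. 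Each elementary swap or power-reduction uses a single relator; conjugating it by the prefix word standing to its left (reconstructible from the current coordinate state) gives a conjugate of a relator, which is emitted. This replaces $\widehat N_j\,x_{j+1}$ by $P_j\cdot\widehat N_{j+1}$ with $P_j$ a product of conjugates of relators, so $x_1\cdots x_{j+1}=_F Q_jP_j\widehat N_{j+1}$ and we set $Q_{j+1}=Q_jP_j$ --- plain concatenation, no reconjugation, so no exponential blow-up. At $j=L$ we get $w=_F Q_L\widehat N_L=Q_L$, and $w':=Q_L$ is output.

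The main obstacle is keeping this within logarithmic space and quasilinear time, since the intermediate collection words are only polynomially bounded (Theorem~\ref{Lem:PolyCoords}), not logarithmically. They must therefore never be materialized: one keeps in memory only the coordinate tuple of the fully collected prefix (an $O(\log L)$-bit vector), the generator currently in transit with an exponent counter, and a stack of pending correction words; because corrections strictly raise lower central weight and weight is bounded by $c$, this stack has depth at most $c$, so the whole state is logarithmic. For the time bound one cannot perform the $|\alpha_k|$ individual swaps (or collect the $|\alpha_k|$ copies of a correction word) one at a time: instead the net coordinate change is computed by a bounded number of evaluations of the functions $p_i,q_i$ of Lemma~\ref{le:malcev}, and the corresponding block of emitted conjugates --- a product of the shape $\prod_{0\le\nu<\alpha_k}(ua_k^{\nu})\,r\,(ua_k^{\nu})^{-1}$ --- is written in compressed, binary-exponent form using the identity $\prod_{0\le\nu<2n}a_k^{\nu}ra_k^{-\nu}=\bigl(\prod_{0\le\nu<n}a_k^{\nu}ra_k^{-\nu}\bigr)\,a_k^{n}\bigl(\prod_{0\le\nu<n}a_k^{\nu}ra_k^{-\nu}\bigr)a_k^{-n}$ (and analogous compression for the power functions).

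With exponents written in binary, as permitted by Remark~\ref{re:binary_output}, each of the $L$ outer steps emits $O(\log^{2}L)$ bits of output and performs $O(\log^{2}L)$ work, so the algorithm runs in space $O(\log L)$ and time $O(L\log^{2}L)$. Finally, the length of $w'$ after full expansion is bounded by summing, over all collection steps, the number of relator applications times the maximal conjugator length; since conjugators are normal-form words whose coordinates obey Theorem~\ref{Lem:PolyCoords} and the number of applications is controlled exactly as in the proof of Lemma~\ref{Lem:MatrixBound}, this yields a polynomial bound of the same degree $2m(8c^{3})^{m}$ as in Theorem~\ref{th:logspace_membership}. The step I expect to be most delicate is this simultaneous control of the logarithmic-space bookkeeping for the recursive collection and the compression of the emitted relator blocks into binary-exponent form without exceeding quasilinear time.
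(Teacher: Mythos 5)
Your overall strategy --- instrument the collection process and emit a conjugate of a relator for every elementary rewrite --- is not the paper's route, and as written it leaves a genuine gap in the complexity analysis. The paper instead reduces the problem to membership: it passes to the free nilpotent group $F(X)$ of class $c$, precomputes (independently of $w$) the full-form sequence $(y_1,\ldots,y_s)$ for $N=\ker(F(X)\to G)$ together with an expression of each $y_i$ as a product of conjugates of relators (found by exhaustive search, as a precomputation), and then, given $w\in N$, simply invokes Theorem~\ref{th:logspace_membership} to write $w=y_1^{\gamma_1}\cdots y_s^{\gamma_s}$ and substitutes. All space, time and length bounds are inherited from Theorems~\ref{Thm:KernelAndPreimage} and~\ref{th:logspace_membership}, which is exactly why the stated degree $2m(8c^{3})^{m}$ appears.

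The gap in your version is the claim that the decorated collection runs in logarithmic space and time $O(L\log^{2}L)$. Two specific points are unsupported. First, ``a stack of pending correction words \ldots{} has depth at most $c$, so the whole state is logarithmic'' does not follow: bounded depth does not bound the size of the entries, and a pending correction at weight level $\ell$ can have length of order $L^{\ell}$. Storing it only as a coordinate tuple does not suffice, because to emit the conjugates of relators used while collecting it you must recover its letters, and --- worse --- the conjugator of each emitted relator is a prefix of a word that is only partially in normal form and is not visibly reconstructible from $O(\log L)$ bits of state. Second, your compression identity handles only blocks of the special shape $\prod_{\nu}(ua_k^{\nu})\,r\,(ua_k^{\nu})^{-1}$, which is what arises at the outermost level of a single collection pass; the recursive collection of corrections-of-corrections produces conjugators not of this form, and the total number of elementary rewrites over the whole run is polynomial in $L$ of degree up to about $c$, so without a compression scheme valid at every level of the recursion you cannot conclude that only $O(L\log^{2}L)$ bits are emitted. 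The degree bound $2m(8c^{3})^{m}$ is likewise asserted by analogy rather than derived. None of this is obviously fatal --- a sufficiently careful implementation of collection might be made to work --- but these issues are precisely the hard part of the theorem, and the paper sidesteps them entirely by the kernel/membership reduction.
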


\begin{proof}
Let $G = \langle X | R \rangle$.  Denote by $F(X)$ the free nilpotent group on $X$ 
of class $c$, by $N$ the normal closure of $R$ in $F(X)$, and by $\phi$ the canonical epimorphism $F(X) \rightarrow G$. 
Using Theorem~\ref{Thm:KernelAndPreimage}, compute the full-form sequence 
$Y=(y_{1},\ldots,y_{s})$ for $N = \ker \phi$. By exhaustive search, write each $y_{i}$ 
as a product of 
conjugates of elements of $R$.  Note that this step is a precomputation.

Since $w$ represents the trivial element, $w$ is in $N$, so we run the algorithm from 
Theorem~\ref{th:logspace_membership} in order to express $w$ in the form 
$w=y_{1}^{\gamma_{1}}\cdots y_{s}^{\gamma_{s}}$. Then substitute the expression 
of each $y_{i}$ as a product of conjugates of relators of $G$. 

The space complexity, time complexity, and length bound are as given in Theorem~\ref{th:logspace_membership}, noting that $N$ is fixed and hence Remark 
\ref{re:time_complexity} applies giving $O(L\log^{2} L)$ rather than $O(L\log^{3} L)$. 
\end{proof}





In order to solve the compressed conjugacy problem, we need a compressed 
version of Theorem \ref{Thm:KernelAndPreimage}, in which all input 
and output words are encoded as straight-line programs.
We follow the same algorithmic steps, using Lemma \ref{Lem:compressed_std_form} to 
compute $W$ in Mal'cev coordinate form and Theorem \ref{Thm:CompressedMembership} to compute $\beta_{1},\ldots,\beta_{r}$. 

\begin{theorem}\label{Thm:CompressedKernel}
The compressed version of Theorem \ref{Thm:KernelAndPreimage}, in which 
all input words are given as straight-line programs or binary Mal'cev coordinate 
tuples and the output is given in the same form, runs in time $O(n L^{3})$.
\end{theorem}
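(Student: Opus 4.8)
The plan is to replicate the proof of Theorem~\ref{Thm:KernelAndPreimage} verbatim at the group-theoretic level, replacing each logspace subroutine by its compressed counterpart and tracking bit-lengths in place of element-lengths. First I would pass from the straight-line-program input to the binary Mal'cev coordinate setting: apply Theorem~\ref{Thm:CompressedCoords} to each of $\mathds{A}_1,\ldots,\mathds{A}_n$ and to the program for $h$ (when present), obtaining in time $O(nL^3)$ the coordinate tuples $\coords{g_i}$, $\coords{h_i}$, $\coords{h}$, each entry an $O(L)$-bit integer by Theorem~\ref{Lem:PolyCoords} since the encoded words have length at most $2^{L}$. In the binary Mal'cev coordinate version of the theorem this step is simply omitted, the input tuples already being $O(L)$-bit. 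Concatenating coordinate tuples gives $\coords{h_i g_i}$ in $H\times G$ at no cost, and the central series~\eqref{Eqn:HGseries} together with its associated lower-central Mal'cev basis is exactly as constructed in the proof of Theorem~\ref{Thm:KernelAndPreimage}, so that Theorem~\ref{Lem:PolyCoords} (and hence Lemma~\ref{Lem:MatrixBound}) applies to $H\times G$.

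Next I would run the compressed matrix reduction of Lemma~\ref{Lem:compressed_std_form} on the $n$ rows $\coords{h_i g_i}$, obtaining in time $O(nL^3)$ the full-form sequence $W=(v_1u_1,\ldots,v_su_s)$ for $Q=\langle h_ig_i\rangle$; by Lemma~\ref{Lem:MatrixBound} (with $L$ replaced by $2^L$) each coordinate of each $v_iu_i$ has magnitude polynomial in $2^L$ and hence is an $O(L)$-bit number, and $s\le m$. Reading off the largest $r$ with $v_r\neq 1$ and setting $X=(u_{r+1},\ldots,u_s)$ and $Y=(v_1,\ldots,v_r)$ is immediate from the coordinate tuples, and the verification that $X$ generates $\ker\phi$ and $Y$ generates $\mathrm{Im}(\phi)$ is word-for-word that of Theorem~\ref{Thm:KernelAndPreimage} and carries no complexity cost. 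This answers~(\ref{kernel}).

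For~(\ref{preimage}) I would feed $\coords{h}$ together with the image sequence $Y$ (which has $r\le m$ generators, all with $O(L)$-bit coordinates) to the binary-coordinate version of Theorem~\ref{Thm:CompressedMembership}, obtaining in time $O(L^3)$ the unique exponents $\beta_1,\ldots,\beta_r$ with $h=v_1^{\beta_1}\cdots v_r^{\beta_r}$ (well-defined since $Y$ is the full sequence for $\mathrm{Im}(\phi)$), each an $O(L)$-bit number; then $\coords{g}$ for $g=u_1^{\beta_1}\cdots u_r^{\beta_r}$ is computed by a bounded number of applications of Lemma~\ref{le:malcev} on $O(L)$-bit inputs, i.e.\ in time $O(L^2)$ by Lemma~\ref{Lem:RecomputeMalcev}. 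Finally, to emit output in the required form: when straight-line programs are wanted, each $u_i$, $v_i$, and $g$ has $O(L)$-bit coordinates, so its normal form $a_1^{\alpha_1}\cdots a_m^{\alpha_m}$ is encoded by a program of size $O(L)$ as described before Theorem~\ref{Thm:CompressedCoords}; in the binary-coordinate version the tuples are returned directly. Every step is dominated by the $O(nL^3)$ cost of the initial coordinate conversion and of matrix reduction, so the total running time is $O(nL^3)$, and the output has size polynomial in $L$.

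The computation is routine once the machinery of the previous sections is in place; the only point requiring genuine care is the bit-length bookkeeping — confirming that substituting $2^L$ for $L$ throughout Lemmas~\ref{Lem:MatrixBound} and~\ref{Lem:RecomputeMalcev} and Theorems~\ref{th:logspace_membership} and~\ref{Thm:CompressedCoords} turns every ``$O(\log L)$-bit'' estimate into an ``$O(L)$-bit'' one and no worse, so that all intermediate arithmetic stays polynomial in $L$ and the dependence on the number $n$ of generators remains linear.
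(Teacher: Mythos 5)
Your proposal is correct and follows essentially the same route as the paper, which simply notes that one repeats the algorithm of Theorem~\ref{Thm:KernelAndPreimage} using Lemma~\ref{Lem:compressed_std_form} to compute $W$ and Theorem~\ref{Thm:CompressedMembership} to compute $\beta_1,\ldots,\beta_r$. Your additional bit-length bookkeeping (coordinates of $2^L$-length words being $O(L)$-bit by Theorem~\ref{Lem:PolyCoords} and Lemma~\ref{Lem:MatrixBound}) is exactly the justification the paper leaves implicit.
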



\subsection{Conjugacy problem and centralizers}\label{Section:CP}

The decidability of the conjugacy problem in finitely generated nilpotent groups has been known since \cite{Rem69} and \cite{For76} proved that 
every polycyclic-by-finite group is \emph{conjugately separable}: if two elements are not conjugate, there exists a finite quotient in which 
they are not conjugate.  While this fact leads to an enumerative solution to the conjugacy problem, a much more practical approach, using 
matrix reduction and homomorphisms, was given in \cite{Sim94}.  

The computational complexity of this algorithm, however, was not analyzed.  We show here that it may be run using logarithmic space, and that 
the compressed-word version runs in polynomial time.  A necessary step in the solution is the computation of centralizers, which is achieved by induction on 
the nilpotency class $c$ of $G$.

\begin{theorem}\label{Thm:Centralizer}
Let $G$ be a finitely generated nilpotent group. 
There is an algorithm that, given $g\in G$, computes a generating set $X$ (in the form of the corresponding binary coordinates tuples) for the centralizer of $g$ in $G$. The algorithm runs in space logarithmic in $L=|g|$ and time $O(L\log^{2} L)$, $X$ contains at most $m$ elements, and there is a degree $(16m(c+1)^{2})^{3(c+1)m}$ polynomial function of $L$ that bounds the length of each element of $X$.
\end{theorem}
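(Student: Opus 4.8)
The plan is to induct on the nilpotency class $c$ of $G$, at each step reducing the computation of a centralizer to the computation of a kernel so that Theorem~\ref{Thm:KernelAndPreimage} applies. The base case $c\le 1$ is trivial: $G$ is abelian, $C_G(g)=G$, and the algorithm returns the given Mal'cev basis $A$. For the inductive step I would let $Z=\Gamma_c(G)=\langle a_{c1},\ldots,a_{cm_c}\rangle$, which is central since $[\Gamma_c,G]=\Gamma_{c+1}=1$, and pass to the quotient $\pi\colon G\to\bar G=G/Z$; here $\bar G$ is a fixed nilpotent group of class $c-1$ whose lower central Mal'cev basis is the image of $A$ and whose multiplication data are precomputed. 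Recursively compute a generating set $\bar x_1,\ldots,\bar x_k$ of $C_{\bar G}(\pi(g))$, lift each $\bar x_j$ to an element $x_j\in G$ (take the element whose coordinate tuple is $\coords{\bar x_j}$ padded by zeros in the $Z$-block), and set $M=\langle x_1,\ldots,x_k,a_{c1},\ldots,a_{cm_c}\rangle$, which equals $\pi^{-1}(C_{\bar G}(\pi g))$. Since $\pi(g)$ commutes with itself we have $g\in M$, and any $x$ commuting with $g$ satisfies $\pi(x)\in C_{\bar G}(\pi g)$, so $C_G(g)\le M$ and hence $C_G(g)=C_M(g)$.

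The crucial point is that $\psi\colon M\to Z$, $\psi(x)=[x,g]$, is a homomorphism. For $x\in M$ one has $\pi([x,g])=[\pi x,\pi g]=1$, so $[x,g]\in Z$; centrality of $Z$ then gives $[xy,g]=[x,g]^y[y,g]=[x,g][y,g]$ and $[x^{-1},g]=[x,g]^{-1}$ for all $x,y\in M$. Therefore $C_G(g)=C_M(g)=\ker\psi$, and I would obtain a generating set $X$ for $\ker\psi$ by invoking Theorem~\ref{Thm:KernelAndPreimage} with source subgroup $M$ (presented by the generators above), target $Z$, and homomorphism values $\psi(x_j)=[x_j,g]$ — computed from coordinates via Lemma~\ref{Lem:RecomputeMalcev} — and $\psi(a_{cj})=1$. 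The assignment is consistent precisely because $x\mapsto[x,g]$ is already a homomorphism, so the hypotheses of Theorem~\ref{Thm:KernelAndPreimage} are met and it returns $X$ with $|X|\le m$.

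For the complexity I would first compute $\coords{g}$ via Theorem~\ref{th:compute_malcev} in space $O(\log L)$ and time $O(L\log^2 L)$; from then on the recursion manipulates only coordinate tuples. The recursion has constant depth (at most $c-1$) along the fixed chain $G\to G/\Gamma_c\to G/\Gamma_{c-1}\to\cdots$ of fixed nilpotent groups with precomputed Mal'cev data, and at each level it performs a bounded number of coordinate-arithmetic operations together with one call to the binary Mal'cev coordinate version of Theorem~\ref{Thm:KernelAndPreimage} on at most $2m$ input elements. Because the number of inputs to that call is fixed, Remark~\ref{re:time_complexity} gives time $O(\log^2 L)$ per call once coordinates are available, and space $O(\log L)$; and all intermediate coordinates remain $O(\log L)$-bit since the length blow-up at each level is only polynomial in the previous length. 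Thus the overall running time is $O(L\log^2 L)$ and the space is $O(\log L)$. For the output length bound, composing the degree $\le 2m(8(c+1)^2)^{2m}$ polynomial blow-ups of Theorem~\ref{Thm:KernelAndPreimage} (its parameters here being the sum of classes $\le c+1$ and basis size $|A|+|B|\le 2m$) over the $\le c-1$ recursion levels yields a polynomial bound which, after a routine simplification of the iterated exponents, has degree $(16m(c+1)^2)^{3(c+1)m}$.

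The main obstacle is less any single idea than the bookkeeping around it: establishing that $\psi$ is a genuine homomorphism on the correct subgroup $M$ (this is what licenses the reduction to Theorem~\ref{Thm:KernelAndPreimage}), verifying that recursing through a constant number of quotient groups keeps every intermediate object storable in logarithmic space, and tracking the compounded polynomial degree through the $c-1$ levels precisely enough to land on the stated exponent.
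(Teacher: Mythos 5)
Your proposal is correct and follows essentially the same route as the paper: induct on the nilpotency class, pass to $G/\Gamma_c$, take the preimage $J$ (your $M$) of the centralizer there, observe that $u\mapsto[g,u]$ is a homomorphism into the central abelian subgroup $\Gamma_c$ whose kernel is $C_G(g)$, and compute that kernel via Theorem~\ref{Thm:KernelAndPreimage}. The complexity and length-bound bookkeeping you sketch matches the paper's analysis as well.
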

\begin{proof}
Let $G=\Gamma_{0}\geq \Gamma_{1}\geq\ldots\geq\Gamma_{c+1}=1$ be the lower central 
series of $G$ (or another series satisfying (\ref{Eqn:CommutatorGamma})).  
We proceed by induction on $c$. If $c=1$, then $G$ is abelian and $C(g)=G$ so we return $a_{11}, a_{12},\ldots,a_{1m_{1}}$.  Assume 
that the theorem holds for finitely presented nilpotent groups of class $c-1$, 
in particular for $G/\Gamma_{c}$. We use the series 
$\{\Gamma_{i}/\Gamma_{c}\}_{i=0}^{c}$ for $G/\Gamma_{c}$.


Compute a generating set $K=\{k_{1}\Gamma_{c},\ldots,k_{m-m_{c}}\Gamma_{c}\}$ for the centralizer of $g\Gamma_{c}$ in $G/\Gamma_{c}$. 
Let $J=\langle k_{1},\ldots,k_{m-m_{c}}, a_{c1},a_{c2},\ldots,a_{cm_{c}}\rangle$, 
which is the preimage of 
$\langle K\rangle$ under the homomorphism 
$G\rightarrow G/\Gamma_{c}$.  
Define $f:J\rightarrow G$ by 
\[
f(u) = [g,u].
\]
Since $u\in J$, $u$ commutes with $g$ modulo $\Gamma_{c}$, hence $[g,u]\in \Gamma_{c}$ and so $\mathrm{Im}(f)\subset \Gamma_{c}$. We claim that $f$ is a 
homomorphism.  Indeed, 
\[
f(g,u_{1}u_{2})=[g,u_{1}u_{2}]=[g,u_{2}][g,u_{1}][[g,u_{1}],u_{2}],
\]  
but $[g,u_{1}]\in \Gamma_{c}$ hence $[[g,u_{1}],u_{2}]\in \Gamma_{c+1}=1$, and 
$[g,u_{1}]$ and $[g,u_{2}]$ commute, both being elements of the abelian group 
$\Gamma_{c}$. 

The centralizer of $g$ is precisely the kernel of $f:J\rightarrow \Gamma_{c}$, since if $h$ commutes with $g$, then $h\Gamma_{c}\in \langle K\rangle$ 
so $h\in J$.  We compute a generating set using 
Theorem \ref{Thm:KernelAndPreimage}.

\emph{Complexity and length bound.} We proceed to prove the length bound by induction on $c$.  For $c=1$, the algorithm returns a single symbol.  
In the inductive case, 
the algorithm is invoked with input $g\Gamma_{c}$, which has size $L$.  Each returned generator $k_{i}$ has length bounded by some number $\kappa$ which is a polynomial function 
of $L$ of degree $(16mc^{2})^{3cm}$.  
Since for each $i=1,\ldots,m-m_{c}$, $[g, k_{i}]$ has length at most $4\kappa$, 
each input-output pair $(k_{i},[g,k_{i}])$ or $(a_{cj},[g,a_{cj}])$ has 
length at most $5\kappa$ hence 
the total input to the kernel algorithm of Theorem \ref{Thm:KernelAndPreimage} has length 
bounded by $5m\kappa$.  The length of the Mal'cev basis of $\Gamma_{c}$ is 
$m_{c}\leq m$, and its nilpotency class is 1, so we use $2m$ and $c+1$ in 
place of $m$ and $c$ in the degree bound in 
Theorem \ref{Thm:KernelAndPreimage}.  Hence the 
length of each output generator has length bounded by a polynomial function of $L$ of degree
\[
(16mc^{2})^{3cm} \cdot 2m(8(c+1)^{2})^{2m} < (16m(c+1)^{2})^{3(c+1)m},
\]
as required.

The logarithmic space bound follows from the fact that in each recursive call, the number $c$ of which is constant, 
the total size of the input is bounded by a polynomial function 
of $L$ and the kernel algorithm runs in logarithmic space.  The time complexity of $O(L\log^{2} L)$ arises entirely from the computation 
of the Mal'cev coordinates of $g\Gamma_{c}$. 
Indeed, 
the total bit-size of the coordinate tuple of $g$ is logarithmic in $L$ and each invocation of the kernel algorithm 
is made with at most $m$ elements of bit-size logarithmic in a polynomial function of $L$, hence logarithmic in $L$, so each of the $c$ recursive 
calls terminates in time $O(\log^{3} L)$ by Theorem \ref{Thm:CompressedKernel}.

\end{proof}

We also require a compressed version of Theorem \ref{Thm:Centralizer}. The 
algorithm is the same, with a time complexity of $O(L^{3})$ arising 
both from the initial computation of Mal'cev coordinates and the fact that 
Theorem \ref{Thm:CompressedKernel} is invoked with at most $m$ elements 
of bit-size $O(L)$.

\begin{theorem}\label{Thm:CompressedCentralizer}
Let $G$ be a finitely generated nilpotent group.  There is an algorithm that, 
given a straight-line program $\mathds{A}$ over $G$ (or a binary Mal'cev coordinate 
tuple), computes in time $O(L^{3})$ a generating set of the centralizer 
of $\eval{\mathds{A}}$ in $G$, where $L=|\mathds{A}|$.  The generators are given 
as straight-line programs (or binary Mal'cev coordinate tuples) of size polynomial 
in $L$.
\end{theorem}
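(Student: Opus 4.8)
The plan is to run exactly the recursive algorithm of Theorem \ref{Thm:Centralizer}, but with every group element carried as a straight-line program (equivalently, as a binary Mal'cev coordinate tuple, via Theorem \ref{Thm:CompressedCoords}), and to track how the sizes of these compressed representations grow through the $c$ levels of recursion. First I would convert the input straight-line program $\mathds{A}$ to the binary Mal'cev coordinate tuple $\coords{\eval{\mathds{A}}}$ using Theorem \ref{Thm:CompressedCoords}; by Theorem \ref{Lem:PolyCoords} (applied with $|\eval{\mathds{A}}|\le 2^{L}$) each coordinate is an $O(L)$-bit integer, and this conversion costs $O(L^{3})$. From this point on the element $g$ and the quotient $g\Gamma_{c}$ are represented by coordinate tuples of bit-size $O(L)$, so the recursion on $G/\Gamma_{c}$ is invoked with input size $O(L)$.

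The inductive step is structurally identical to the proof of Theorem \ref{Thm:Centralizer}: compute a generating set $K$ for the centralizer of $g\Gamma_{c}$ in $G/\Gamma_{c}$ recursively; form $J=\langle k_{1},\ldots,k_{m-m_{c}},a_{c1},\ldots,a_{cm_{c}}\rangle$; define the homomorphism $f:J\to\Gamma_{c}$ by $f(u)=[g,u]$; and compute $C(g)=\ker f$ by invoking the compressed kernel algorithm (Theorem \ref{Thm:CompressedKernel}) on the at most $m$ input-output pairs $(k_{i},[g,k_{i}])$ and $(a_{cj},[g,a_{cj}])$. The only thing to verify is that at every level the compressed words stay of size polynomial in $L$, so that Theorem \ref{Thm:CompressedKernel} is always called with inputs of bit-size $O(L)$ (after re-reducing coordinates): the length (or program size) bound in Theorem \ref{Thm:Centralizer} is already a polynomial in $L$ of fixed degree $(16m(c+1)^{2})^{3(c+1)m}$, so each coordinate of each generator is an $O(L)$-bit number by Theorem \ref{Lem:PolyCoords}, and each commutator $[g,k_{i}]$ is obtained from a constant-size product of these coordinate tuples by Lemma \ref{Lem:RecomputeMalcev}, hence again $O(L)$ bits. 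Since there are only $c$ recursive calls, the compounded bit-size remains $O(L)$ throughout (absorbing the fixed-degree polynomial blow-up into the constant in the exponent, exactly as in the length-bound induction of Theorem \ref{Thm:Centralizer}).

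For the time complexity, the initial coordinate conversion is $O(L^{3})$ by Theorem \ref{Thm:CompressedCoords}. Each of the $c$ recursive levels performs: a bounded number of commutator computations via Lemma \ref{Lem:RecomputeMalcev} on $O(L)$-bit tuples (cost $O(L^{2})$ each), and one call to the compressed kernel algorithm (Theorem \ref{Thm:CompressedKernel}) on $n\le m$ input pairs of bit-size $O(L)$, which costs $O(nL^{3})=O(L^{3})$ since $m$ is a constant. As $c$ and $m$ are constants, the total is $O(L^{3})$. Finally, the output generators are returned as binary Mal'cev coordinate tuples of size polynomial in $L$, and may be converted to straight-line programs of the same asymptotic size by writing each coordinate in binary and using the encoding of $a_{i}^{\alpha_{i}}$ as a program of size $O(\log|\alpha_{i}|)$ described in \S\ref{Section:SLP}. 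The main obstacle — really the only point requiring care — is the size-growth bookkeeping: confirming that the fixed-degree polynomial expansion per recursive level does not compound across the $c$ levels into a super-polynomial bound. This is handled exactly as the degree computation in the proof of Theorem \ref{Thm:Centralizer}, since the number of levels is the constant $c$ and each level multiplies the degree by a constant factor, yielding a final degree that is still a constant (and hence an $O(L)$ bit-size for every coordinate encountered).
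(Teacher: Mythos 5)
Your proposal is correct and follows essentially the same route as the paper: the paper likewise justifies this theorem by running the algorithm of Theorem \ref{Thm:Centralizer} verbatim on binary Mal'cev coordinate tuples, with the $O(L^{3})$ cost coming from the initial coordinate conversion (Theorem \ref{Thm:CompressedCoords}) and the constant number of calls to Theorem \ref{Thm:CompressedKernel} on at most $m$ elements of bit-size $O(L)$. Your explicit bookkeeping of the polynomial size growth across the $c$ recursion levels is the right (and only) point needing care, and it matches the paper's implicit argument.
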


We may now solve the conjugacy and conjugacy search problems in logspace, again 
by induction on the nilpotency class of $G$.

\begin{theorem}\label{Thm:CP}
Let $G$ be a finitely presented nilpotent group. 
There is an algorithm that, given $g,h\in G$, either 
\begin{enumerate}[(i)]
\item produces binary coordinates tuple for $u\in G$ such that $g=u^{-1}hu$, or 
\item determines that no such element $u$ exists.  
\end{enumerate}
The algorithm runs in space logarithmic in $L=|g|+|h|$ and time $O(L\log^{2} L)$, and the word length of $u$ is bounded by a degree  
$2^{m}(6mc^{2})^{m^{2}}$ polynomial function of $L$.
\end{theorem}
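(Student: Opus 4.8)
The plan is to proceed by induction on the nilpotency class $c$ of $G$, following the reduction used in the proof of Theorem~\ref{Thm:Centralizer}. In the base case $c=1$, $G$ is abelian and $g=u^{-1}hu$ holds for some $u$ if and only if $g=h$, which is decided in logspace by Theorem~\ref{th:compute_malcev}; if so, take $u=1$. For the inductive step, assume the statement holds for finitely presented nilpotent groups of class $c-1$, in particular for $\bar G=G/\Gamma_{c}$ where $\Gamma_{c}$ is the last nontrivial term of the lower central series. First I would solve the conjugacy problem for the images $g\Gamma_{c}$ and $h\Gamma_{c}$ in $\bar G$, obtaining (if it exists) an element $\bar u_{0}=u_{0}\Gamma_{c}$ with $u_{0}^{-1}hu_{0}\equiv g \pmod{\Gamma_{c}}$; if no such element exists in $\bar G$, then no conjugator exists in $G$ and we halt. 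Otherwise we are reduced to the case where $h':=u_{0}^{-1}hu_{0}$ already satisfies $h'\equiv g \pmod{\Gamma_{c}}$, i.e. $g(h')^{-1}=z\in\Gamma_{c}$, and we must decide whether there is $v\in G$ with $v^{-1}h'v=g$. Any such $v$ must centralize $h'$ modulo $\Gamma_{c}$, so $v$ lies in the preimage $J$ of the centralizer $C_{\bar G}(h'\Gamma_{c})$, which we compute via Theorem~\ref{Thm:Centralizer}; and on $J$ the map $f(v)=[h',v]$ is a homomorphism into the abelian group $\Gamma_{c}$ (same computation as in the proof of Theorem~\ref{Thm:Centralizer}). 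The conjugacy equation $v^{-1}h'v=g$ is equivalent to $[h',v]=h'^{-1}g=z^{-1}$ (up to a trivial sign/order adjustment inside the abelian group), so it reduces to deciding whether $z^{-1}\in\mathrm{Im}(f)$ and, if so, finding a preimage — exactly the task solved by Theorem~\ref{Thm:KernelAndPreimage}(\ref{preimage}). If $z^{-1}\notin\mathrm{Im}(f)$, no conjugator exists; otherwise, composing $u_{0}$ with the preimage produced by Theorem~\ref{Thm:KernelAndPreimage} yields the desired $u$, whose coordinates we output in binary (or, per Remark~\ref{re:binary_output}, as a word).

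For the complexity analysis I would track the length bounds through the $c$ recursive calls, just as in Theorem~\ref{Thm:Centralizer}. Each recursive call to the class-$(c-1)$ conjugacy algorithm and to the centralizer algorithm returns data of length bounded by a fixed-degree polynomial in $L$; forming the homomorphism $f$ on the (at most $m$) generators of $J$ produces input to the kernel/preimage algorithm of total length polynomial in $L$; and Theorem~\ref{Thm:KernelAndPreimage} then outputs a preimage of length bounded by a polynomial of degree $2m(8c^{3})^{m}$ in that input length. Iterating over the $c$ levels of recursion multiplies the degrees, giving an overall degree bound of the claimed shape $2^{m}(6mc^{2})^{m^{2}}$ (the exact constants absorb the $m\mapsto 2m$, $c\mapsto c+1$ substitutions needed when passing to the product group $H\times G$ inside Theorem~\ref{Thm:KernelAndPreimage}, as in Theorem~\ref{Thm:Centralizer}). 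For the space bound: the recursion depth $c$ is constant, the coordinate tuple of every element handled has bit-size logarithmic in $L$ (since all lengths are polynomial in $L$), and every subroutine invoked — normal form computation, centralizer, kernel and preimage — runs in logarithmic space; composing a constant number of logspace procedures stays in logspace. The $O(L\log^{2}L)$ time arises, as in Theorem~\ref{Thm:Centralizer}, entirely from the initial conversion of $g$ and $h$ to Mal'cev coordinates via Theorem~\ref{th:compute_malcev}; thereafter all work is on $O(\log L)$-bit integers and, by Remark~\ref{re:time_complexity} applied to the fixed-size subsequent inputs, costs only $O(\mathrm{polylog}\,L)$ per recursive level.

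I expect the main obstacle to be bookkeeping rather than conceptual: making sure the reduction "peel off $\Gamma_{c}$, then solve a linear/homomorphism problem in the abelian top" is set up so that the conjugator found downstairs can actually be lifted, and that the equation $v^{-1}h'v=g$ really does translate into $[h',v]=h'^{-1}g$ with the correct handling of the (abelian, hence harmless) ordering of commutators — this is where the identity $[g,u_{1}u_{2}]=[g,u_{2}][g,u_{1}][[g,u_{1}],u_{2}]$ and the vanishing of the triple commutator in $\Gamma_{c}$ get used, exactly as in the proof of Theorem~\ref{Thm:Centralizer}. The other delicate point is the degree accounting across the $c$ nested invocations, since each level both enlarges the ambient group (to a product) and composes polynomial bounds; I would present this as a short explicit recurrence on the degree and check that it is dominated by $2^{m}(6mc^{2})^{m^{2}}$, leaving the arithmetic to the reader as is done elsewhere in the paper. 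The compressed-word version (stated, presumably, immediately after) then follows the identical algorithm with Theorems~\ref{Thm:CompressedCentralizer} and~\ref{Thm:CompressedKernel} in place of their logspace counterparts, giving time $O(L^{3})$.
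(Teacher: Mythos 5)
Your proposal is correct and follows essentially the same route as the paper: induction on the class, reduction to $G/\Gamma_{c}$, then the homomorphism $x\mapsto[g,x]$ (your $[h',v]$ is literally the same map since $g$ and $h'$ differ by a central element) from the preimage $J$ of the centralizer into the central abelian subgroup $\Gamma_{c}$, with the remaining question settled by the image-membership/preimage machinery of Theorem~\ref{Thm:KernelAndPreimage}. The only quibble is a harmless sign: the target is $h'^{-1}g=z$ rather than $z^{-1}$, which, as you note, is immaterial since $\mathrm{Im}(f)$ is a subgroup.
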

\begin{proof} \emph{Algorithm.}  
We proceed by induction on $c$.
If $c=1$, then $G$ is abelian and $g$ is conjugate to $h$ if and only if 
$g=h$. If so, we return $u=1$.

Now assume $c>1$, and that the theorem holds for any nilpotent group of class 
$c-1$, in particular for $G/\Gamma_{c}$.  Apply the algorithm to $g \Gamma_{c}$ and 
$h \Gamma_{c}$, using the series $\{\Gamma_{i}/\Gamma_{c}\}_{i=0}^{c}$ 
for $G/\Gamma_{c}$. 
If these elements are not conjugate, then 
$g$ and $h$ are not conjugate and we return `No'. Otherwise, we obtain $v \Gamma_{c}\in G/\Gamma_{c}$. 
such that $g \Gamma_{c} = h^{v} \Gamma_{c}$. 

Let $\phi:G\rightarrow G/\Gamma_{c}$ be the canonical homomorphism, 
$J=\phi^{-1}(C(g\Gamma_{c}))$, and define $f:J\rightarrow \Gamma_{c}$ by $f(x)=[g,x]$.  As in the proof of Theorem \ref{Thm:Centralizer}, 
the image of $f$ is indeed 
in $\Gamma_{c}$ and $f$ is a homomorphism. We claim that 
$g$ and $h$ are conjugate if and only if $g^{-1}h^{v}\in f(J)$.  Indeed, if there exists $w\in G$ such that $g=h^{vw}$, then 
\[
1 \cdot \Gamma_{c} = g^{-1} w^{-1} h^{v} w \cdot \Gamma_{c} = [g,w] \cdot \Gamma_{c} 
\]
hence $w\in J$, so $w^{-1}\in J$ as well.  Then $g^{-1}h^{v}=[g,w^{-1}] \in f(J)$, as required.  The converse is immediate. 
So it suffices to express, if possible, $g^{-1}h^{v}$ as $[g,w]$ with $w\in J$, in which case the conjugator is $u=vw^{-1}$.
 
Compute a generating set $\{w_{1} \Gamma_{c}, \ldots, w_{m-m_{c}} \Gamma_{c}\}$ for 
$C(g \Gamma_{c})$ using 
Theorem \ref{Thm:CompressedCentralizer}.  
Then $J$ is generated by
$\{w_{1},\ldots,w_{m-m_{c}},a_{c1},a_{c2},\ldots,a_{cm_{c}}\}$.
Compute $\coords{g^{-1}h^{v}}$ and use 
Theorem \ref{Thm:CompressedMembership} to determine if $g^{-1}h^{v}\in f(J)$ 
and if so use Theorem \ref{Thm:CompressedKernel} to find $w\in G$ such that 
$g^{-1}h^{v}=f(w)$.  Return $u=vw^{-1}$.

\emph{Complexity and length of $u$.} We first prove the length bound by induction on $c$.  For $c=1$, we have $u=1$.  
In the inductive case, each $w_{i}$ has length bounded by a degree 
$(16mc^{2})^{3cm}$ polynomial function of $L$ and $v$ has length 
bounded by a degree $(16mc^{3})^{m}$ polynomial function of $L$. Take 
$\delta=(16mc^{3})^{3cm}$, which bounds both of these degrees.
Then the input 
to Theorem \ref{Thm:KernelAndPreimage}, which consists 
of the pairs $(w_{i},[g,w_{i}])$ for $i=1,\ldots,m-m_{c}$, the 
pairs $(a_{cj},[g,a_{cj}])$ for $j=1,\ldots,m_{c}$, and the element $g^{-1}h^{v}$, 
also has length bounded by a degree $\delta$ polynomial function 
of $L$.  Hence $w$ has length bounded by a polynomial function of degree 
\[
(16mc^{3})^{3cm} \cdot 2m(8(c+1)^{3})^{m} < (16m(c+1)^{3})^{3(c+1)m}.
\]
Since this is greater than the degree bound for $v$, the output $u=vw^{-1}$ 
satisfies this degree bound.

Logarithmic space complexity follows immediately from the fact that the conjugator length only grows by a polynomial function of $L$ and 
the depth of the recursion is constant.  
The time complexity arises entirely from 
the computation of Mal'cev coordinates, in Theorem \ref{Thm:Centralizer} and 
of $g^{-1}h^{v}$.  Indeed, Theorems \ref{Thm:CompressedMembership} are each  
invoked with a constant number of inputs, each having bit-size $O(\log L)$, and therefore their time complexity is $O(\log^{3} L)$.
\end{proof}

We solve the compressed version of the conjugacy problem in the same way, 
using the compressed version of the centralizer algorithm.


\begin{theorem}
Let $G$ be a finitely generated nilpotent group.  There is an algorithm that, given two straight-line programs 
$\mathds{A}$ and $\mathds{B}$ over $G$ (or binary Mal'cev coordinate tuples), determines in time $O(L^{3})$ whether or not 
$\eval{\mathds{A}}$ and $\eval{\mathds{B}}$ are conjugate in $G$, where $L=|\mathds{A}|+|\mathds{B}|$. If so, a straight-line program over $G$ of size polynomial 
in $L$ producing a conjugating element is returned.
\end{theorem}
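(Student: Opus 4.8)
The plan is to run the recursive algorithm of Theorem~\ref{Thm:CP} essentially verbatim, but carrying group elements as binary Mal'cev coordinate tuples rather than words and replacing each subroutine by its compressed counterpart. First, using Theorem~\ref{Thm:CompressedCoords}, compute $\coords{\eval{\mathds{A}}}$ and $\coords{\eval{\mathds{B}}}$ in time $O(L^{3})$; since $|\eval{\mathds{A}}|,|\eval{\mathds{B}}|\le 2^{L}$, Theorem~\ref{Lem:PolyCoords} guarantees that every coordinate of either element is an $O(L)$-bit integer. From this point the input is in the ``binary Mal'cev coordinate'' form to which the remaining algorithms apply directly, and from here on every group element that arises is stored as a coordinate tuple with entries of bit-size polynomial in $L$.

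Now induct on the nilpotency class $c$ exactly as in Theorem~\ref{Thm:CP}: pass to $G/\Gamma_{c}$ (whose coordinate tuples are truncations of the ambient ones), recursively obtain a conjugator $v\Gamma_{c}$, form $J=\phi^{-1}(C(g\Gamma_{c}))$ via the compressed centralizer algorithm of Theorem~\ref{Thm:CompressedCentralizer}, and define the homomorphism $f:J\to\Gamma_{c}$, $f(x)=[g,x]$. Computing the generators $[g,w_{i}]$, $[g,a_{cj}]$ of $f(J)$, the element $h^{v}=v^{-1}hv$, and $g^{-1}h^{v}$ amounts to multiplying a bounded number of factors with known coordinates, hence is done by Lemma~\ref{Lem:RecomputeMalcev} on $O(L)$-bit inputs in time $O(L^{2})$. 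Apply Theorem~\ref{Thm:CompressedMembership} to decide whether $g^{-1}h^{v}\in f(J)$, and, if so, Theorem~\ref{Thm:CompressedKernel} to produce $w\in G$ with $f(w)=g^{-1}h^{v}$; return $u=vw^{-1}$. Correctness is identical to that of Theorem~\ref{Thm:CP}.

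The main point requiring care — and the only genuine obstacle — is to confirm that the bit-sizes of all intermediate coordinate tuples stay $O(L)$ across the constant number $c$ of recursion levels. This is precisely the content of the polynomial length bounds already established: the bounds of Theorems~\ref{Thm:Centralizer}, \ref{Thm:KernelAndPreimage}, and \ref{Thm:CP}, read with the decompressed input length $2^{L}$ in place of $L$, say that $v$, each centralizer generator $w_{i}$, the element $g^{-1}h^{v}$, and finally $u$ all have word length bounded by a fixed polynomial in $2^{L}$ of degree depending only on $m$ and $c$, hence (by Theorem~\ref{Lem:PolyCoords}) coordinate tuples of bit-size $O(L)$. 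Consequently each of the $O(1)$ calls to Theorems~\ref{Thm:CompressedCentralizer}, \ref{Thm:CompressedMembership}, and \ref{Thm:CompressedKernel} is made with at most $m$ elements of bit-size $O(L)$ and so runs in time $O(L^{3})$, while the auxiliary coordinate arithmetic runs in time $O(L^{2})$; together with the initial $O(L^{3})$ coordinate computation this gives total time $O(L^{3})$.

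Finally, output the conjugator: given $\coords{u}=(\alpha_{1},\ldots,\alpha_{m})$ with each $\alpha_{i}$ an $O(L)$-bit integer, encode the normal form $a_{1}^{\alpha_{1}}\cdots a_{m}^{\alpha_{m}}$ by a straight-line program of size $O(L)$, producing each power $a_{i}^{\alpha_{i}}$ by the binary-subdivision scheme of \S\ref{Section:SLP} and concatenating; in the binary Mal'cev coordinate formulation one simply returns $\coords{u}$. If at some level the recursive call to $G/\Gamma_{c}$ reports that the images are not conjugate, the algorithm reports ``not conjugate'' and halts. This establishes the claimed time bound and the polynomial bound on the size of the returned program.
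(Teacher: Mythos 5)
Your proposal is correct and follows essentially the same route as the paper, which proves this theorem simply by rerunning the algorithm of Theorem~\ref{Thm:CP} with the compressed subroutines (Theorems~\ref{Thm:CompressedCentralizer}, \ref{Thm:CompressedMembership}, \ref{Thm:CompressedKernel}) after an initial conversion to binary Mal'cev coordinates. The details you supply --- the $O(L)$-bit bound on all intermediate coordinates obtained by reading the word-length bounds with $2^{L}$ in place of $L$, and the conversion of the final coordinate tuple back to a straight-line program --- are exactly the ones the paper leaves implicit.
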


Our solution to the conjugacy probelm in nilpotent groups allows to take advantage of results in~\cite{Bumagin}, where Bumagin showed that a group $G$ hyperbolic relative to finitely generated subgroups $P_1,\ldots,P_k$ has polynomial-time solvable conjugacy (search) problem whenever the parabolic subgroups $P_1,\ldots,P_k$ have polynomial-time solvable conjgacy (search) problem. Together with Theorem~\ref{Thm:CP} this immediately gives the following result.

\begin{theorem}\label{th:rel_hyp_conj} Let $G$ be a finitely generated group hyperbolic relative to finitely generated nilpotent subgroups $P_1,\ldots, P_k$.
Then the word problem, the conjugacy problem, and the conjugacy search problem in $G$ can be solved in polynomial time. Moreover, the time complexity of the word and conjugacy problems is $O(L^3\log L)$, where $L$ is the length of input.
\end{theorem}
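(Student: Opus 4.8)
The plan is to obtain Theorem~\ref{th:rel_hyp_conj} by feeding our nilpotent algorithms into the reduction theorem of Bumagin~\cite{Bumagin} as a black box. Recall that \cite{Bumagin} shows the following: if $G$ is hyperbolic relative to finitely generated subgroups $P_{1},\ldots,P_{k}$ and each $P_{i}$ has conjugacy (search) problem solvable in polynomial time, then the conjugacy (search) problem of $G$ is solvable in polynomial time, and similarly for the word problem (the latter being part of the standard package for relatively hyperbolic groups whose parabolics have solvable word problem). Theorem~\ref{Thm:CP} supplies exactly the hypothesis needed for the parabolic subgroups, since it solves the conjugacy and conjugacy search problems in any finitely presented nilpotent group in time $O(L\log^{2}L)$; Theorem~\ref{th:compute_malcev} gives the word problem in the same groups in logarithmic space, hence in polynomial time. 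So the qualitative assertion follows at once.

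For the explicit bound $O(L^{3}\log L)$ on the word and conjugacy problems I would track the complexity of Bumagin's reduction. Given a conjugacy instance in $G$ of size $L$, her algorithm uses the relative isoperimetric inequality to bound the size of a putative conjugator and thereby reduces the problem to a polynomial number of conjugacy instances inside the $P_{i}$, each of polynomial size in $L$, together with a polynomial number of word-problem instances in $G$ and in the $P_{i}$. Substituting the $O(L\log^{2}L)$ cost of each nilpotent conjugacy/word-problem subroutine from Theorem~\ref{Thm:CP} and Theorem~\ref{th:compute_malcev}, and checking that the auxiliary bookkeeping in $G$ (itself carried out through the parabolic subroutines and the relatively hyperbolic structure) stays within the same budget, yields the stated $O(L^{3}\log L)$ bound; the same accounting restricted to the word problem alone gives the bound there.

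The proof contains essentially no new group theory: the only work is verifying that the black-box hypothesis of \cite{Bumagin} is met, which is immediate from Theorem~\ref{Thm:CP}, together with bookkeeping the polynomial degree. The one point requiring genuine care is the exponent in the refined bound: one must confirm that Bumagin's reduction produces parabolic instances of size $O(L)$ (or at worst of controlled polynomial size) and invokes the parabolic subroutine at most $O(L^{2})$-many times, so that the quasilinear nilpotent subroutine of Theorem~\ref{Thm:CP} does not push the overall degree past $3$. Checking this line-by-line against \cite{Bumagin} is the main obstacle; if the reduction were more costly one would still obtain a (higher-degree) polynomial, consistent with the qualitative claim.
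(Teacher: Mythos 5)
Your proposal follows essentially the same route as the paper: both invoke Bumagin's reduction theorem for relatively hyperbolic groups and feed in Theorem~\ref{Thm:CP} as the parabolic conjugacy subroutine; the paper simply cites the explicit complexity formula of Bumagin's Theorem 1.5, namely $\max\{O(L\log^2 L),\, O(L^2\cdot L\cdot\log L)\}=O(L^3\log L)$, rather than re-deriving the bookkeeping you describe. One small discrepancy: to land exactly on $O(L^3\log L)$ the paper plugs in the \emph{linear-time} (real-time) word problem for nilpotent groups from~\cite{HR} as the middle factor $L$ in Bumagin's formula, whereas your use of Theorem~\ref{th:compute_malcev} (normal forms in time $O(L\log^2 L)$) would inflate that term to $O(L^3\log^3 L)$; the qualitative polynomial-time claim is unaffected, and the paper likewise concedes that the conjugacy \emph{search} problem only gets a higher-degree polynomial because of the output-length issue in Remark~\ref{re:binary_output}.
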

\begin{proof}
Note that~\cite[Theorem 1.5]{Bumagin} allows to obtain specific estimates on the time complexity of the above algorithms in terms of the complexity of word and conjugacy (search) problems in $P_1,\ldots,P_k$. In a given nilpotent group, on an input of length $L$, the time complexity of word problem is $O(L)$ by~\cite{HR}, 
and that of a conjugacy problem is $O(L\log^2 L)$ by Theorem~\ref{Thm:CP}. By~\cite[Theorem 1.5]{Bumagin}, we obtain that the time complexity of conjugacy problem in $G$ is bounded by
\[
T(L)=\max\{O(L\log^2L), O(L^2\cdot L\cdot \log L)\}=O(L^3\log L),\]
where $L$ is the length of the input.

The corresponding estimate in the case of conjugacy search problem, by~\cite[Theorem 5.10]{Bumagin} is given by 
\[
\max\{T(L), O(C_s(L))\},
\]
where $T(L)$ is as above, and $C_s(L)$ is a bound on the time complexity of conjugacy search problem in all parabolic subgroups $P_i$.
In our case, that gives a polynomial of a higher degree, according to Remark~\ref{re:binary_output}.
\end{proof}

\section{Presentation-uniform algorithms}
\label{Section:Uniform}

The algorithms presented in the previous sections do not include the nilpotent group $G$ as a part of the input. We now consider problems 
which do take $G$ as a part of the input.  Let $\mathcal{N}_{c}$ be the class of nilpotent groups of nilpotency class at most~$c$.  We consider groups in $\mathcal{N}_{c}$ 
presented using at most a fixed number, $r$, of generators. In the algorithms in this section, the integers $c$ and $r$ are constants. First, we use 
Theorem~\ref{Lem:Compute_std_form} to find consistent nilpotent presentations for 
such groups.

\begin{proposition}\label{pr:find_presentation_poly}
Let $c$ and $r$ be fixed integers. There is an algorithm that, given a finite presentation 
with $r$ generators of a group $G$ in $\mathcal{N}_{c}$, 
produces a consistent nilpotent presentation of $G$ and an explicit isomorphism, in 
space logarithmic in the size $L$ of the given presentation.  
Further, the size of the presentation is bounded by a polynomial function of $L$ and if 
binary numbers are used in the output relators then the algorithm runs in time 
$O(L\log^{3} L)$.
\end{proposition}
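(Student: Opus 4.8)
\emph{Proof proposal.}
The idea is to realize $G$ as a quotient $F/N$ of the free nilpotent group $F=F_{r,c}$ of class $c$ on $r$ generators, compute the unique full-form generating sequence of the normal subgroup $N$ inside $F$ using the matrix reduction of \S\ref{se:matrix_reduction}, and then read a consistent nilpotent presentation of $G$ off the presentation of $F$ together with that full form. Since $c$ and $r$ are fixed, $F$ is a fixed finitely presented torsion-free nilpotent group: by Proposition~\ref{pr:find_malcev} we may precompute, once and for all, a consistent nilpotent (indeed lower central Mal'cev) presentation of $F$, its Mal'cev basis $a_{1},\ldots,a_{m}$, and the functions $p_{i},q_{i}$ of Lemma~\ref{le:malcev} (via the algorithm of~\cite{LGS98}); none of this depends on the input. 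Given the input presentation $\langle x_{1},\ldots,x_{r}\mid s_{1},\ldots,s_{k}\rangle$ of $G$ of total size $L$, the assignment $x_{i}\mapsto a_{i}$ induces an epimorphism $F\twoheadrightarrow G$, and because $G$ has class at most $c$ its kernel is $N=\langle\langle \bar s_{1},\ldots,\bar s_{k}\rangle\rangle^{F}$, where $\bar s_{t}$ is $s_{t}$ read as a word in $a_{1},\ldots,a_{r}$.

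First I would produce a subgroup generating set for $N$. Since $F$ has class $c$, the normal closure of a word $s$ is generated by $s$ together with the iterated commutators $[s,a_{i_{1}},\ldots,a_{i_{\ell}}]$ for $0\le \ell\le c-1$ and $i_{1},\ldots,i_{\ell}\in\{1,\ldots,r\}$: longer commutators lie in $\Gamma_{c+1}=1$, and $z^{a_{i}}=z[z,a_{i}]$, $z^{a_{i}^{-1}}=z[z,a_{i}^{-1}]$ show the subgroup they generate is normalised by the $a_{i}^{\pm1}$. Thus $N$ has $n=O(k)=O(L)$ such generators, each of word length $O(|s_{t}|)$, of total length $O(L)$; moreover the word at any requested position of any of these generators is computable in space $O(\log L)$ by a bounded-depth recursive descent, so the whole list can be streamed. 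Feeding them into the algorithm of Theorem~\ref{Lem:Compute_std_form} (whose row-at-a-time design keeps this in logspace even though the list is not of logarithmic length) yields the unique full-form sequence $(g_{1},\ldots,g_{s})$ for $N$, with $s\le m$, pivot columns $\pivot{1}<\cdots<\pivot{s}$ and pivot entries $d_{j}=\coordsj{g_{j}}{\pivot{j}}>0$, in space $O(\log L)$ and time $O(L\log^{3}L)$; by Lemma~\ref{Lem:MatrixBound} every coordinate of every $g_{j}$ is polynomially bounded in $L$, hence is an $O(\log L)$-bit number.

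Next I would read off the presentation. The images $b_{i}=\bar a_{i}$ form a polycyclic generating sequence of $G=F/N$ adapted to the central series $\{C_{i}N/N\}$, $C_{i}=\langle a_{i},\ldots,a_{m}\rangle$; property~(\ref{li:std_full}) of the full form gives $C_{i}N/C_{i+1}N=\langle b_{i}\rangle$ of order $d_{j}$ when $i=\pivot{j}$ and of infinite order otherwise, so the generators $b_{\pivot{j}}$ with $d_{j}=1$ are redundant and I eliminate them; the remaining pivots form the torsion set, with $e_{\pivot{j}}=d_{j}$. To obtain relators \eqref{stdpolycyclic1}--\eqref{stdpolycyclic3} for $G$, I take each relator of the corresponding shape in $F$ --- the torsion relator $a_{\pivot{j}}^{d_{j}}=g_{j}^{-1}a_{\pivot{j}}^{d_{j}}$ interpreted in $G$, and the commutation relators $a_{j}a_{i}=a_{i}a_{j}w_{ij}$ of $F$ --- and rewrite its right-hand side as a reduced normal form modulo $N$: compute its coordinates in $F$, then for $j=1,\ldots,s$ left-multiply by the power of $g_{j}$ that reduces the $\pivot{j}$-th coordinate modulo $d_{j}$ (exactly the reduction step in the proof of Theorem~\ref{th:logspace_membership}), and finally drop the eliminated generators and renumber. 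Each rewrite handles a word of bounded length in logspace and time $O(\log^{3}L)$, its output exponents are polynomially bounded in $L$ by Theorem~\ref{th:logspace_membership}, and there are $O(m^{2})$ relators, so the presentation has size polynomial in $L$ --- and, writing the exponents in binary, size $O(\log L)$, with the whole procedure costing $O(L\log^{3}L)$, dominated by the matrix reduction of Theorem~\ref{Lem:Compute_std_form}. The explicit isomorphism is given one way by $x_{i}\mapsto b_{i}$ followed by the reduced-mod-$N$ normal form of $a_{i}$ in the new generators, and the other way by sending each $b_{i}=\bar a_{i}$ to the fixed word for $a_{i}$ over $x_{1},\ldots,x_{r}$ (part of the precomputed data for $F$); both directions are logspace computable.

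I expect the main work to be bookkeeping rather than new mathematics: one must verify that the relators obtained in this way genuinely present $G$ and do so \emph{consistently}, i.e.\ that $d_{j}$ is precisely the order of $b_{\pivot{j}}$ modulo the later generators --- this is where the uniqueness of the full form and property~(\ref{li:std_full}) are used --- and one must check that every step, in particular the on-the-fly generation and streaming of the $O(L)$ commutator words $[s_{t},a_{i_{1}},\ldots,a_{i_{\ell}}]$ into Theorem~\ref{Lem:Compute_std_form}, stays within $O(\log L)$ workspace. The magnitude bounds needed for the logspace analysis are furnished by Theorem~\ref{Lem:PolyCoords}, Lemma~\ref{Lem:MatrixBound}, and Theorem~\ref{th:logspace_membership}.
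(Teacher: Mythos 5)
Your proposal is correct and follows essentially the same route as the paper: precompute a consistent nilpotent presentation of the free nilpotent group $F$ of class $c$ on $r$ generators, generate the kernel $N$ of $F\twoheadrightarrow G$ by the $O(L)$ iterated commutators of the relators, run the logspace matrix reduction of Theorem~\ref{Lem:Compute_std_form} to get the full-form sequence for $N$, and derive consistency from the uniqueness of the full form (Lemma~\ref{Lem:UniqueStandardForm}). The only divergences are cosmetic --- the paper simply outputs $\langle Y\mid S\cup T\rangle$ with $T$ the full-form relators, skipping your elimination of pivots with $d_j=1$ and the rewriting of right-hand sides into reduced normal form --- plus one small fix: the commutator arguments $a_{i_1},\ldots,a_{i_\ell}$ should range over $X\cup X^{-1}$ (as the paper does), since your normality argument invokes $[z,a_i^{-1}]$, which is not in the generating set you list.
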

\begin{proof}
Let $G$ be presented as $G=\langle X\mid R\rangle$. Let $F=F(X)$ be the free nilpotent group of class $c$ on generators $X$. As a precomputation, produce a consistent nilpotent presentation $\langle Y\mid S\rangle$ for $F$. One may do this in such a way that 
$X\subset Y$ and elements of $Y\setminus X$ are iterated commutators (so-called `basic commutators') of elements of $X$.  Consider the natural surjection $\phi:F\rightarrow G$ 
and let $N=\ker(\phi)$, which is the normal closure of $R$ in $F$.  
Denoting $R=\{r_1,\ldots, r_k\}$, $N$ is generated  by iterated commutators $[\ldots[[r_i,x_1],x_2],\ldots,x_j]$, where $i=1,\ldots,k$, $j\le c$, and $x_1,\ldots, x_j\in X\cup X^{-1}$.
The total length of these generators is linear in $L$ since $c$ and $r$ are constant.  We now produce this generating set 
and apply Theorem~\ref{Lem:Compute_std_form} in $F$ with this set, producing 
the full-form sequence $T$ for $N$.

Now $G\simeq \langle Y\mid S\cup T\rangle$, and we claim that this is a consistent nilpotent presentation.  Since $\langle Y\mid S\rangle$ is a nilpotent 
presentation and the elements of $T$ add relators of the form (\ref{stdpolycyclic1}), 
the presentation is nilpotent. To prove that it is consistent, suppose some 
$y_i\in Y$ has order $\alpha_i$ modulo $\langle y_{i+1},\ldots,y_m\rangle$ in 
$\langle Y\mid S\cup T\rangle$. Since the order is infinite in $F$, there must 
be element of the form $y_i^{\alpha_i}y_{i+1}^{\alpha_{i+1}}\cdots y_m^{\alpha_m}$ in 
$N$. But then, by Lemma~\ref{Lem:UniqueStandardForm}, $T$ must contain an element $y_i^{\alpha'_i}y_{i+1}^{\alpha'_{i+1}}\cdots y_m^{\alpha'_m}$ where $\alpha'_i$ divides $\alpha_i$.  Hence $\alpha_i$ cannot be smaller than $\alpha'_i$ 
and so the presentation is consistent.

The space and time complexity, and the polynomial bound on the size of the presentation, follow immediately from 
Theorem \ref{Lem:Compute_std_form} since $c$ is fixed and $m$ depends only on $c$ and $r$.
\end{proof}

If $G$ is restricted to the class $\mathcal{N}_{c}$ and presented with a fixed number 
$r$ of generators, the above result can be employed to produce presentation-uniform versions of our algorithms for problems (\ref{Prob:NF})-(\ref{Prob:CP}) (see list on p.~\pageref{Prob:NF}) that run in logarithmic space and quasi-linear time.

\begin{theorem}\label{co:uniform_polytime}
Let $\Pi$ denote any of the problems (\ref{Prob:NF})--(\ref{Prob:CP}).
For all $c,r\in\mathbb N$, there is an algorithm that, given a finite presentation $\langle X| R\rangle$ with $|X|\leq r$ of a group in 
$\mathcal{N}_{c}$ and input of $\Pi$ as words over $X$, solves $\Pi$ in $\langle X| R\rangle$ on that input in logarithmic space and quasi-linear time.  
In the case of (\ref{Prob:KP}), the second 
group may be specified in the input by a presentation but must also be in $\mathcal{N}_{c}$ and use $r$ generators.
\end{theorem}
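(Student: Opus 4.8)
The plan is to reduce the uniform problem to the non-uniform algorithms of the preceding sections, by first converting the input presentation to a consistent nilpotent presentation with a Mal'cev basis of \emph{bounded} size and then checking that the earlier complexity analyses survive the fact that the structure constants of this presentation have become part of the input. First I would fix a precomputation depending only on $c$ and $r$. Let $F$ be the free nilpotent group of class $c$ on $r$ generators $X_{0}$; its number $m$ of basic commutators of weight $\le c$ depends only on $c$ and $r$, and $F$ carries a fixed consistent nilpotent presentation $\langle Y\mid S\rangle$ with $X_{0}\subseteq Y$, $|Y|=m$, $Y\setminus X_{0}$ consisting of basic commutators in $X_{0}$, and the terms $\langle y_{i},\ldots,y_{m}\rangle$ arranged so they can be grouped into the lower central series of $F$. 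Using Lemma~\ref{le:malcev} together with the observation following its proof, I would precompute \emph{universal} coordinate functions $\hat p_{1},\ldots,\hat p_{m},\hat q_{1},\ldots,\hat q_{m}$ for a generic consistent nilpotent presentation on $m$ generators, with the structure constants $\alpha_{ijk},\beta_{ijk},\mu_{i\ell},e_{i}$ and the torsion set treated as formal parameters; their size and degree depend only on $c$ and $r$. For problem~(\ref{Prob:KP}) I would precompute the same data for a second bounded-rank, bounded-class group.

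Given an input presentation $\langle X\mid R\rangle$ of a group $G\in\mathcal N_{c}$ with $|X|\le r$, together with the $\Pi$-input words, let $L$ be the total input size. I would apply Proposition~\ref{pr:find_presentation_poly} to produce, in logarithmic space and time $O(L\log^{3}L)$, a consistent nilpotent presentation $\mathcal P_{G}$ of $G$ on the generating set $Y$ (with binary exponents in the relators) and the explicit isomorphism. Since $m$ is a constant, $\mathcal P_{G}$ has $O(1)$ relators and every structure constant of $\mathcal P_{G}$ is bounded by a polynomial function of $L$, hence is an $O(\log L)$-bit integer; substituting these constants into the universal functions yields the coordinate functions $p_{i}^{G},q_{i}^{G}$ of $G$, each still of constant size.

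Next I would verify that Theorem~\ref{Lem:PolyCoords} is available for $G$ with the basis $Y$. The images in $G$ of the lower central series of $F$ form a central series of length $\le c$ with $[\Gamma_{i},\Gamma_{j}]\le\Gamma_{i+j}$, so by Remark~\ref{Rem:LowerCentral} the coordinate bound holds; moreover the quantities $C_{0}$ and $E$ from its proof are sums and maxima of the (polynomially bounded) structure constants of $\mathcal P_{G}$, so the resulting $\kappa$ is polynomially bounded in $L$, and hence every Mal'cev coordinate in $G$ of a word over $Y^{\pm}$ of length at most $L$ is an $O(\log L)$-bit integer. At this point we are exactly in the situation assumed throughout the preceding sections: a consistent nilpotent presentation with a size-$m$ Mal'cev basis satisfying $[\Gamma_{i},\Gamma_{j}]\le\Gamma_{i+j}$, with the coordinate functions $p_{i}^{G},q_{i}^{G}$ and the coordinate bound of Theorem~\ref{Lem:PolyCoords} in hand. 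The $\Pi$-input, being words over $X\subseteq Y$, needs no translation, so I would invoke the corresponding algorithm --- Theorem~\ref{th:compute_malcev} for (\ref{Prob:NF}), Theorem~\ref{th:logspace_membership} for (\ref{Prob:MP}), Theorem~\ref{Thm:KernelAndPreimage} for (\ref{Prob:KP}), Theorem~\ref{Thm:EffectiveCoherence} for (\ref{Prob:P}), Theorem~\ref{Thm:Centralizer} for (\ref{Prob:C}), Theorem~\ref{Thm:CP} for (\ref{Prob:CP}) --- and finally push the output back through the isomorphism, replacing each $y_{i}$ by its basic-commutator word over $X$ and keeping exponents in binary as in Remark~\ref{re:binary_output}.

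The main obstacle is confirming that the complexity bounds of those theorems survive when the structure constants of $G$ --- and therefore $\kappa$, $E$, and the coefficients of $p_{i}^{G},q_{i}^{G}$ --- are input-dependent rather than absolute constants. The key observations are that there are only $O(1)$ such numbers, each of bit-length $O(\log L)$, that they enter $O(1)$-size arithmetic expressions and polynomials of constant degree (so each coordinate evaluation still costs $O(\log L)$ space and $\mathrm{poly}(\log L)$ time), that the degree bounds in Lemma~\ref{Lem:MatrixBound} and in Theorems~\ref{th:logspace_membership}, \ref{Thm:KernelAndPreimage}, \ref{Thm:Centralizer}, \ref{Thm:CP} depend only on $m$ and $c$ (constants once $c,r$ are fixed), so output sizes stay polynomial in $L$, and that all intermediate integers are inflated by only an additive $O(\log L)$ in bit-length, preserving logarithmic space. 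The one genuinely non-formal input needed for this is that the universal coordinate functions can be taken of size and degree independent of the structure constants they will eventually be fed, which is precisely the content of the remark following Lemma~\ref{le:malcev}.
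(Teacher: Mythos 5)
Your proposal is correct and follows essentially the same route as the paper's proof: apply Proposition~\ref{pr:find_presentation_poly} to get a consistent nilpotent presentation on a bounded-size Mal'cev basis, substitute the (polynomially bounded, hence $O(\log L)$-bit) structure constants into precomputed universal coordinate functions, invoke Remark~\ref{Rem:LowerCentral} to recover Theorem~\ref{Lem:PolyCoords} with an input-dependent but polynomially bounded $\kappa$, and then run the non-uniform algorithms, observing that all degree and space bounds depend only on $c$ and $r$. The extra details you supply (translating output back through the isomorphism, the explicit accounting of how the structure constants enter the arithmetic) are consistent with and only elaborate on the paper's argument.
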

\begin{proof}
Let $L$ denote the total input size. 
By Proposition~\ref{pr:find_presentation_poly}, we can produce a consistent nilpotent presentation 
for~$G$, of size polynomial in $L$, in logarithmic space and time 
$O(L\log^3 L)$. 
The number of generators $m$ of this presentation depends only on $c$ and $r$, 
as it is equal to the number of generators of the precomputed nilpotent presentation 
of the free nilpotent group $F$ of class $c$.

We compute, in advance, the functions $p_{i}$ and $q_{i}$ for any nilpotent presentation 
on $m$ generators with the exponents $\alpha_{ijl},\beta_{ijl}, \mu_{il}, e_{i}$ appearing as variables (see 
the discussion following Lemma~\ref{le:malcev}). Substituting the correct values for 
these exponents (obtained from the nilpotent presentation of $G$) we obtain the functions $p_{i}$ and $q_{i}$ for the specific 
group $G$.  Note that the magnitude of all the numbers $\alpha_{ijl},\beta_{ijl}, \mu_{il}, e_{i}$ is bounded by a polynomial function 
of $L$, hence they may be encoded as $O(\log L)$-bit numbers.

We note that Theorem~\ref{le:poly_coords} is satisfied for the obtained presentation, by Remark~\ref{Rem:LowerCentral}. The constant $\kappa$ depends on the presentation 
of $G$, but may be seen from the proof of Theorem~\ref{le:poly_coords} to be 
bounded by a polynomial function of $L$ of degree depending on $c$ and $r$.  Hence 
the magnitude of each coordinate of a word $w$ is bounded by a polynomial function 
of $L$ of constant degree.  One may substitute this bound in subsequent arguments 
in place of $\kappa |w|^{i}$ without affecting the logspace or quasi-linear time 
nature of the computations.  In Lemma \ref{Lem:MatrixBound}, the constants $C$ and 
$K$ will be greater, but remain bounded by a constant depending on $c$ and $r$.

Specifically, the algorithms of Theorems 
\ref{th:compute_malcev}, \ref{Lem:Compute_std_form}, 
\ref{th:logspace_membership}, \ref{Thm:EffectiveCoherence}, \ref{Thm:KernelAndPreimage}, 
\ref{Thm:Centralizer}, \ref{Thm:CP} as well as Corollaries \ref{cor:poly_membership} 
and \ref{Cor:NonBinaryCoherence} run in logspace and time $O(L\log^3 L)$ 
when $G$ (from class $\mathcal{N}_{c}$ with $r$ generators) is included in the 
input.  The length bounds given in these results do not hold as stated, but do 
hold with polynomials of some higher (but still constant for fixed $c$ and $r$) degree.


\end{proof}

\bibliographystyle{amsplain}
\bibliography{nilpotent_bibliography}

\end{document}